\def\R{{\mathbb R}}
\def\R{{\mathbb R}}
\def\z+{{\mathbb Z}_+}
\def\supp{{\rm{\,supp\,}}}
\def\bint{{\ifinner\rlap{\bf\kern.30em--}
\int\else\rlap{\bf\kern.35em--}\int\fi}\ignorespaces}
\def\sbint{{\ifinner\rlap{\bf\kern.32em--}
\hspace{0.078cm}\int\else\rlap{\bf\kern.45em--}\int\fi}\ignorespaces}
\newtheorem{theorem}{Theorem}[section]
\newtheorem{lemma}[theorem]{Lemma}
\newtheorem{proposition}[theorem]{Proposition}
\theoremstyle{definition}
\newtheorem{remark}[theorem]{Remark}
\newtheorem{definition}[theorem]{Definition}
\numberwithin{equation}{section}
\numberwithin{equation}{section}
\numberwithin{equation}{section}
\begin{document}

\arraycolsep=1pt

\title{\Large\bf Estimates of fractional Hausdorff operators on weighted Lebesgue and Hardy spaces\footnotetext{\hspace{-0.35cm} {\it 2020
Mathematics Subject Classification}. {42B30, 42B10, 42B20}
\endgraf{\it Key words and phrases.} fractional Hausdorff operator, Hardy space, Lebesgue space, weight
\endgraf This project is supported by Natural Science Foundation of Xinjiang Province (No.2024D01C40) and NSFC (No.12261083).
\endgraf $^\ast$\,Corresponding author.
}}
\author{Zifei Yu and Baode Li$^\ast$}
\date{ }
\maketitle

\vspace{-0.8cm}

\begin{center}
\begin{minipage}{13cm}\small
{\noindent{\bf Abstract.}} In this article, we obtain some necessary and sufficient conditions for the boundedness of fractional Hausdorff operators $h_{\Phi,\beta}$ on weighted Lebesgue spaces $(0\leq\beta<1)$, which are fractional variants of Bandaliev-Safarova [Hacet. J. Math. Stat., 2021, 50, 1334-1346]; it is found that a new constraint for $\beta$ should be added and it holds automatically for non-fractional variants in [HJMS, 2021] $(\beta=0)$.  Then, we further obatin the boundedness of fractional Hausdorff operators $h_{\Phi,\beta}$ on power-weighted Hardy spaces which are fractional variants of Ruan-Fan [Math. Nachr., 2017, 290, 2388-2400]. Ruan-Fan obtained the relevant boundedness results by means of the radial maximal function characterization of Hardy spaces, while in this paper, two different relevant results are obtained respectively by using the radial maximal function characterization and the Riesz characterization of power-weighted Hardy spaces.  

\end{minipage}
\end{center}

\section{Introduction\label{s1}}

The Hausdorff operator originates from the Hausdorff summation method \cite{Hardy1952}. Hausdorff operators are a type of averaging operation, invariant under dilations, capable of achieving constructive approximation, with adjoint operators of the same type, and “almost commuting” with the Fourier transform \cite{Liflyand2020}. Fix a locally integrable function on $(0, \infty)$. The definition of the one-dimensional Hausdorff operator is
$$
h_{\Phi}(f)(x):=\int_0^\infty \frac{\Phi(y^{-1}x)}{y} f(y) dy.
$$
For convenience, we first assume that \( f \) belongs to the Schwartz function space $\mathscr{S}(\R)$. By choosing an appropriate \(\Phi\), the Hausdorff operator includes many classical operators. For example, by choosing \(\Phi\) as \(\chi_{(1,\infty)}(t)t^{-1}\), \(\chi_{(0,1)}(t)\), \( \chi_{(1,\infty)}(t)t^{-1}+\chi_{(0,1]}(t) \) and \(\gamma(1-t)^{\gamma-1}\chi_{(0,1)}(t)\) $(\gamma>0)$ respectively, the Hausdorff operator becomes the Hardy, adjoint Hardy, Ces\'aro and Hardy-Littlewood-P\'olya operators. Most of the results on Hausdorff operators in recent decades can be found in \cite{Liflyand2024}.

A high-dimensional generalization of \( h_{\Phi} \) is
$$
H_\Phi(f)(x):=\int_{\R^n} \frac{\Phi(|y|^{-1}x)}{|y|^n}f(y) dy.
$$
The fractional Hausdorff operator is a natural generalization of the Hausdorff operator and is defined as
\[
H_{\Phi,\beta}(f)(x):=\int_{\R^n} \frac{\Phi(|y|^{-1}x)}{|y|^{n-\beta}} f(y) dy.
\]
It was first proposed by Lin and Sun \cite{LinSun2012}. By choosing \(\Phi\) as \(|x|^{\beta-n}\chi_{(1,\infty)}(|x|)\), \(\chi_{(0,1]}(|x|)\) and \(|x|^{\beta-n}\chi_{(1,\infty)}(|x|)+\chi_{(0,1]}(|x|)\) respectively, the fractional Hausdorff operator $H_{\Phi,\beta}$ becomes the fractional Hardy, adjoint Hardy and Hardy-Littlewood-P\'olya operators. In the one-dimensional case, we denote \( h_{\Phi,\beta} := H_{\Phi,\beta} \).

Bandaliyev and Safarova \cite{2021Safa} obtained the necessary and sufficient conditions for the boundedness of \( h_{\Phi} \) on weighted Lebesgue spaces. Gao, Wu and Guo \cite{2015Gao} obtained the sufficient conditions for the boundedness of \( H_{\Phi,\beta} \) on the power-weighted Lebesgue spaces, where \( n \geq 2 \). Next, X. Liu, M. Wei, P. Song, and D. Yang \cite{2024Wei} obtained the sharp reverse inequality for \( H_{\Phi,\beta} \) on the power-weighted Lebesgue spaces. And the  boundedness of the multi-Huasdorff operator on power-weighted Lebesgue spaces are also studied in \cite{Zhaofayou}. Therefore, these results naturally lead us to study the necessary and sufficient conditions for the boundedness of $h_{\Phi,\beta}$ on weighted Lebesgue spaces.

Chen, Fan, Lin and Ruan \cite{2016Chen} obtained the sufficient conditions for the boundedness of \( H_{\Phi,\beta} \) on Hardy spaces, where \( n \geq 2 \). Ruan and Fan \cite{2017Ruan} obtained the sufficient conditions for the boundedness of \( H_{\Phi} \) on power-weighted Hardy spaces, where \( n \geq 1 \). And multidimensional Hausdorff operators on Hardy sapces are also study in \cite{Zhuxiangrong}. Therefore, these results naturally lead us to study the sufficient conditions for the boundedness of $h_{\Phi,\beta}$ on power-weighted Hardy spaces.

For our results on weighted Lebesgue spaces (Theorems \ref{t1.9} and \ref{t1.10} below), when \( \beta = 0 \) and restricted to \( \mathbb{R}_+ \), our results completely include Theorems 3.1 and 3.3 in \cite{2021Safa}. It is found that the constraint \( q > 1/(1 - \beta) \) and \( p' > 1/(1 - \beta) \) need to be added respectively, and these two conditions naturally hold when \( \beta = 0 \). 

In proving the boundedness of fractional Hausdorff operators on power-weighted Hardy spaces, the following two points should be noted. First, Chen, Fan, Lin and Ruan \cite{2016Chen} relied on the Riesz characterization of Hardy spaces when establishing the boundedness of  fractional Hausdorff operators on Hardy spaces. Ruan and Fan \cite{2017Ruan} proved the boundedness of Hausdorff operators on power-weighted Hardy spaces by using power-weighted Hardy spaces defined via radial maximal functions. In our research, we found that the Riesz characterization of weighted Hardy spaces has also been established by Cao, Chang, D. Yang and S. Yang \cite{Cao2016}. Thus, by employing the two methods from \cite{2016Chen} and \cite{2017Ruan}, respectively, we obtained two sufficient conditions for the boundedness of fractional Hausdorff operators on power-weighted Hardy spaces.  Moreover, the presence of \( \beta \) makes the step of applying the Leibniz formula more complicated. Our results do not include the case of \( \beta = 0 \), the case of $\beta=0$ has been studied in \cite{2017Ruan}.


The structure is as follows. In Section \ref{s2}, we present the basic Definitions and notation to be used. In Section \ref{s3}, we state the main results of this article. In Section \ref{s4}, we provide all the proofs.

Finally, we make some conventions on notation. Let $[x]$ be the greatest integer not exceeding the real number $x$ and $p'$ be the conjugate exponent of $p\in (1,\infty)$, namely, $1/p+1/p'=1$. Throughout the whole article, the constant $C$ depends on the parameters of importance and it may vary from line to line. Let $A, B \geq 0$. Then $A \lesssim B$ and $B \gtrsim A$ mean that there exists a constant $C>0$ such that $A \leq C B$, where $C$ depends only on the parameters of importance.

\section{Definitions and notation \label{s2}} 

\begin{definition}\label{def2.6}\cite[Definition 8.2.1]{Loukas2024}
A {\it weight} is a nonnegative locally integrable function on $\mathbb{R}^n$ that vanishes or takes the value $\infty$ only on a set of measure zero. Given a weight $w$ and a measurable set $E$, we denote the $w d x$-measure of $E$ by
$$
w(E):=\int_E w(x) d x.
$$
\end{definition}

\begin{definition}\label{def2.3}\cite[Definition 2.1]{2017Ruan}
Let \(1<p<\infty\). We say that a weight \(w \in A_p\) if there exists a constant \(C>0\) such that for all balls \(B\subset\R^n\),
\[
\left(\frac{1}{|B|} \int_B w(x) d x\right)\left(\frac{1}{|B|} \int_B w(x)^{-1 /(p-1)} d x\right)^{p-1} \leq C .
\]
We say that a weight \(w \in A_1\) if there is a constant \(C>0\) such that for all balls \(B\),
\[
\frac{1}{|B|} \int_B w(x) d x \leq C \underset{x \in B}{\operatorname{essinf}} w(x)
\]
Finally, we define $A_{\infty}:=\bigcup_{1 \leq p<\infty} A_p$.
\end{definition}

It is known that $A_p \subset A_r$ for all $r>p$. We also know that if $w \in A_p$ with $1<p<\infty$, then $w \in A_q$ for some $1<q<p$. Therefore, we may use $r_w:=\inf \left\{q>1: w \in A_q\right\}$ to denote the {\it critical index} of $w$. Obviously, if $w \in A_q, q>1$, then we have $1 \leq q_w<q$.

An important example of $A_p$ weight is the power function $|x|^\alpha$. It is well known that $|x|^\alpha$ is an $A_1$ weight if and only if $-n<\alpha \leq 0$. Also, 
\begin{equation}\label{eq2}
|x|^\alpha \in \cap_{(n+\alpha)/n<p<\infty} A_p\quad\text{if}\quad 0<\alpha<\infty,
\end{equation}
where $(n+\alpha)/n$ is called the critical index of $|x|^\alpha$.

\begin{definition}\label{def2.2}\cite[Definition 1.1.1]{Loukas2008}
For \(1\leq p<\infty\), given a weight function \(w\) on \(\R\), we define the \(L^p_w(\R)\) norm of a measurable function \(f\) by
\[
\|f\|_{L^p_w(\R)}:=\left(\int_{\R}|f(x)|^p w(x) d x\right)^{\frac{1}{p}}.
\]
For any \(1\leq p< \infty\), we define \(L^p_w(\R)\) to be the space of all measurable functions \(f\) with \(\|f\|_{L^p_w(\R)}<\infty\). 

\end{definition}

\begin{definition}\label{def2.10}\cite[Definition 1.2.5]{Loukas2008}
For \(1\leq p<\infty\), given a weight function \(w\) and a $w dx$-measurable function \(f\) on \(\R\), we define the weak \(L^p_w(\R)\) norm of \(f\) by
\[
\|f\|_{L^{p, \infty}_w(\R)}:=\sup _{\lambda>0} \lambda w(\{x \in \R:|f(x)|>\lambda\})^{\frac{1}{p}}
\]
For \(1\leq p<\infty\), the space \(L^{p, \infty}_w(\R)\) is defined as the set of all \(wdx\)-measurable functions \(f\) such that \(\|f\|_{L^{p, \infty}_w(\R)}<\infty\). 
\end{definition}

\begin{definition}\label{def2.7}
Given \(f\) in Schwartz space \(\mathscr{S} \left(\mathbb{R}\right)\) we define
\[
\widehat{f}(\xi):=\int_{\mathbb{R}} f(x) e^{-2 \pi i x \xi} d x .
\]
We call \(\widehat{f}\) the {\it Fourier transform} of \(f\).
\end{definition}

\begin{definition}\label{def2.8}
Given \(f\) in \(\mathscr{S} \left(\mathbb{R}\right)\), we define
\[
f^{\vee}(x):=\widehat{f}(-x)
\]
for all \(x \in \mathbb{R}\). The operator
\[
f \mapsto f^{\vee}
\]
is called the {\it inverse Fourier transform.}
\end{definition}

\begin{definition}\label{def2.9}\cite[Definition 5.1.1]{Loukas2008}
The {\it Hilbert transform} of \(f\in\mathscr{S}(\R)\) is
\[
H(f)(x):=\frac{1}{\pi} \text { p.v. } \int_{-\infty}^{+\infty} \frac{f(y)}{x-y} d y .
\]
A well-known identity is 
\begin{equation}\label{eq2.6}
\widehat{H(f)}(x)=(-i \operatorname{sgn} x)\widehat{f}(x) .
\end{equation}
This formula can be used to give an alternative definition of the Hilbert transform.
\end{definition}

\begin{definition}\label{def2.1}\cite[Page 4]{2017Ruan}
Let \(0<p\leq 1\), $w$ be a weight function on $\R$ and \(\varphi\) be a function in \(\mathscr{S}\left(\R\right)\) satisfying
\[
\int_{\R} \varphi(x) d x \neq 0 .
\]
For any tempered distributions $f$, the {\it radial maximal operator} $M^+_{\varphi}$ of $f$ is defined by
\[
M_{\varphi}^+f(x):=\sup _{0<s<\infty}\left|\varphi_s * f(x)\right|,\quad x\in\R,
\]
where \(\varphi_s(x):=\frac{1}{s} \varphi\left(\frac{x}{s}\right)\).
The {\it weighted Hardy space} \(H_w^p(\R)\) is the space of all tempered distributions \(f\) satisfying
\[
\|f\|_{H_w^p}:=\left\|M_{\varphi}^+f(x)\right\|_{L_w^p}<\infty .
\]
And the {\it weighted weak Hardy space} $H_w^{p, \infty}\left(\mathbb{R}\right)$ is the space of all tempered distributions $f$ satisfying
\[
\|f\|_{H_w^{p, \infty}}:=\|M_{\varphi}^+f(x)\|_{L^{p,\infty}_w}<\infty.
\]
\end{definition}

\begin{definition}\label{def2.4}\cite[Definition 1.4]{Cao2016}
Let $0<p\leq1$. The {\it weighted Hilbert Hardy space} $H^p_{w, \rm{Hilb}}\left(\mathbb{R}\right)$ is defined to be the completion of the set
$$
\mathbb{H}^p_{w, {\rm Hilb}} (\R):=\left\{f \in L^2\left(\mathbb{R}\right):\|f\|_{H^p_{w, \rm{Hilb}} (\R)}<\infty\right\}
$$
under the quasi-norm $\|\cdot\|_{H^p_{w, {\rm Hilb}}\left(\mathbb{R}\right)}$, where, for all $f \in L^2\left(\mathbb{R}\right)$,
$$
\|f\|_{H^p_{w, {\rm Hilb}}\left(\mathbb{R}\right)}:=\|f\|_{L^p_w\left(\mathbb{R}\right)}+\|H(f)\|_{L^p_w\left(\mathbb{R}\right)}.
$$
\end{definition}


\begin{proposition}\label{pro2.1}\cite[Theorem 1.5]{Cao2016}
	Let $0<p\leq 1$ and $w\in A_\infty$, then $H^p_w(\R)=H^p_{w,{\rm Hilb}}(\R)$ with equivalent quasi-norms.
\end{proposition}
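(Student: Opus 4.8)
The statement is \cite[Theorem 1.5]{Cao2016}; I sketch how I would organize its proof and indicate where the hypothesis $w\in A_\infty$ enters. Throughout, fix $\varphi\in\mathscr S(\R)$ with $c:=\int_{\R}\varphi\neq0$ and write $P_y,Q_y$ for the Poisson and conjugate Poisson kernels on the upper half-plane.

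\emph{The inclusion $H^p_w(\R)\hookrightarrow H^p_{w,{\rm Hilb}}(\R)$.} First I would prove $\|f\|_{L^p_w}\lesssim\|f\|_{H^p_w}$: for $f\in H^p_w$, which embeds into $L^1_{\rm loc}$, one has $\varphi_s*f\to cf$ as $s\to0^{+}$ at every Lebesgue point of $f$, so $|f(x)|\le|c|^{-1}M_\varphi^+f(x)$ for a.e.\ $x$, and the bound follows by integrating the $p$-th power against $w\,dx$. Next I would show $\|H(f)\|_{L^p_w}\lesssim\|f\|_{H^p_w}$, that is, that the Hilbert transform, being a Calder\'on--Zygmund operator, is bounded on $H^p_w(\R)$ for $w\in A_\infty$: using the atomic/molecular decomposition of $H^p_w$, one verifies that $H$ sends a $w$-atom $a$ to a uniform multiple of an $H^p_w$-molecule by means of the cancellation $\int a=0$, the size and support of $a$, and the kernel estimates of $H$; combining with the first step applied to $H(f)$ gives $\|H(f)\|_{L^p_w}\le|c|^{-1}\|H(f)\|_{H^p_w}\lesssim\|f\|_{H^p_w}$. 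Adding the two estimates yields $\|f\|_{H^p_{w,{\rm Hilb}}}\lesssim\|f\|_{H^p_w}$, and since finite linear combinations of $L^2$ $w$-atoms are dense in $H^p_w(\R)$ and lie in $\mathbb H^p_{w,{\rm Hilb}}(\R)$, the inclusion follows.

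\emph{The inclusion $H^p_{w,{\rm Hilb}}(\R)\hookrightarrow H^p_w(\R)$.} Given $f\in L^2(\R)$ with $f,H(f)\in L^p_w(\R)$, I would pass to the analytic function $F=u+iv$ on the upper half-plane determined by $u(\cdot+iy)=P_y*f$ and $v(\cdot+iy)=Q_y*f=P_y*(Hf)$, whose boundary function is $F_b=f+iH(f)$ with $|F_b|=(|f|^2+|H(f)|^2)^{1/2}$. Since $f,Hf\in L^2$, $F$ lies in the analytic Hardy space $H^2$ of the half-plane, so by inner--outer factorization (equivalently, by subharmonicity of $|F|^{p_0}$ together with a Poisson majorant) one has $|F(x+iy)|^{p_0}\le(P_y*|F_b|^{p_0})(x)$ for every $p_0>0$; hence the non-tangential maximal function $F^{\ast}(x):=\sup\{|F(\xi+iy)|:|\xi-x|<y\}$ satisfies $F^{\ast}(x)^{p_0}\le C\,M(|F_b|^{p_0})(x)$, $M$ being the Hardy--Littlewood maximal operator. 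Then I would invoke a Fefferman--Stein-type maximal comparison $M_\varphi^+f(x)\lesssim F^{\ast}(x)$, resting on the fact that $f$ and $Hf$ are the real and imaginary traces of the single analytic function $F$ (and on $|\varphi_s|\lesssim P_s$ pointwise for $\varphi\in\mathscr S$); this is the technical core. Finally, since $w\in A_\infty=\bigcup_{1\le q<\infty}A_q$, I would fix $q$ with $w\in A_q$ and then $p_0\in(0,p)$ with $p/p_0>q$, so that $M$ is bounded on $L^{p/p_0}_w(\R)$, and conclude
\[
\|f\|_{H^p_w}^p=\|M_\varphi^+f\|_{L^p_w}^p\lesssim\big\|\big(M(|F_b|^{p_0})\big)^{1/p_0}\big\|_{L^p_w}^p=\big\|M(|F_b|^{p_0})\big\|_{L^{p/p_0}_w}^{p/p_0}\lesssim\big\||F_b|^{p_0}\big\|_{L^{p/p_0}_w}^{p/p_0}=\|F_b\|_{L^p_w}^p\lesssim\|f\|_{L^p_w}^p+\|H(f)\|_{L^p_w}^p,
\]
i.e.\ $\|f\|_{H^p_w}\lesssim\|f\|_{H^p_{w,{\rm Hilb}}}$; passing to completions finishes the proof.

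The first inclusion is routine once the atomic/molecular theory of $H^p_w$ is in hand. The main obstacle will be the second inclusion --- more precisely, the passage from the radial $\varphi$-maximal function to the non-tangential maximal function of the analytic extension together with the subharmonic Poisson majorization of $|F|^{p_0}$, and the choice of $p_0<p$ small enough that the Hardy--Littlewood maximal operator is bounded on $L^{p/p_0}_w$. This is exactly where the $A_\infty$ hypothesis is used and is the substantive content of \cite{Cao2016}.
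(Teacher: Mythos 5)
The paper offers no proof of this proposition: it is imported verbatim as \cite[Theorem 1.5]{Cao2016}, so there is no in-paper argument to compare against. Your outline follows the standard route, which is also in substance the route of the cited reference specialized to $n=1$: atoms-to-molecules for the boundedness of $H$ on $H^p_w$ in the forward inclusion, and conjugate-harmonic-function/subharmonicity arguments combined with the $A_\infty$-weighted boundedness of the Hardy--Littlewood maximal operator on $L^{p/p_0}_w$ for the reverse. So the architecture is right; two steps, however, are stated more strongly than they are true and need repair.

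First, for $p<1$ a general element of $H^p_w(\R)$ is a tempered distribution and need not lie in $L^1_{\rm loc}$, so the a.e.\ inequality $|f|\le|c|^{-1}M_\varphi^+f$ can only be asserted on the class $H^p_w\cap L^2$ on which the comparison with $\mathbb{H}^p_{w,{\rm Hilb}}(\R)$ is actually performed; you do restrict to that class when you pass to completions, but the opening claim that $H^p_w$ ``embeds into $L^1_{\rm loc}$'' is false as stated and should be removed. Second, and more seriously, the pointwise bound $M_\varphi^+f(x)\lesssim F^{\ast}(x)$ is not true for a general Schwartz $\varphi$: the domination $|\varphi_s|\lesssim P_s$ only yields $M_\varphi^+f\lesssim\sup_{s>0}P_s*|f|\approx Mf$, with the absolute value inside the convolution, and $M$ is useless on $L^p_w$ for $p\le1$. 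What is true, and what the argument needs, is the weighted Fefferman--Stein equivalence of maximal functions for $w\in A_\infty$, namely $\|M_\varphi^+f\|_{L^p_w}\approx\|u^{\ast}\|_{L^p_w}$ where $u^{\ast}$ is the nontangential maximal function of the Poisson extension $u=P_y*f$; since $|u|\le|F|$ one then has $u^{\ast}\le F^{\ast}$ pointwise, and your chain through $F^{\ast}(x)^{p_0}\lesssim M(|F_b|^{p_0})(x)$ and the choice $p_0<p$ with $w\in A_{p/p_0}$ goes through. With these two corrections the sketch is a faithful account of the cited theorem's proof.
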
 

By Proposition \ref{pro2.1}, we obtain the follow Proposition \ref{pro2.5} which implies that space $H^p_w(\R,\varphi)$ is independent of the choice of the Schwartz functions $\varphi$ satisfying $\int_{\R}\varphi(x)dx\neq0$.

\begin{proposition}\label{pro2.5}
Let $w\in A_\infty$. For any $\varphi_1$, $\varphi_2\in \mathscr{S}(\R)$ with $\int_{\R}\varphi_i(x)dx\neq0$, $i=1,2$, $H^p_w(\R,\varphi_1)=H^p_w(\R,\varphi_2)$ with equivalent quasi-norms.
\end{proposition}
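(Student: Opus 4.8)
The plan is to obtain the statement as a direct consequence of Proposition \ref{pro2.1}, using the weighted Hilbert Hardy space of Definition \ref{def2.4} as a common reference space that involves no auxiliary Schwartz function. Indeed, the quasi-norm $\|f\|_{H^p_{w,{\rm Hilb}}(\R)}=\|f\|_{L^p_w(\R)}+\|H(f)\|_{L^p_w(\R)}$ is defined purely in terms of $f$ and its Hilbert transform, so the space $H^p_{w,{\rm Hilb}}(\R)$ is, by construction, independent of any choice of $\varphi$. Since $\varphi_1$ and $\varphi_2$ are both admissible in the sense of Definition \ref{def2.1} (Schwartz functions with nonzero integral), Proposition \ref{pro2.1} applies to each of them.

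Concretely, I would apply Proposition \ref{pro2.1} once with $\varphi=\varphi_1$ and once with $\varphi=\varphi_2$ — the equivalence there being valid for any admissible defining function, with an implicit constant that may depend on that function. This produces constants $C_1,C_2\ge1$ with
\[
C_i^{-1}\,\|f\|_{H^p_{w,{\rm Hilb}}(\R)}\le\|f\|_{H^p_w(\R,\varphi_i)}\le C_i\,\|f\|_{H^p_{w,{\rm Hilb}}(\R)}\qquad(i=1,2),
\]
and in particular $H^p_w(\R,\varphi_1)=H^p_{w,{\rm Hilb}}(\R)=H^p_w(\R,\varphi_2)$ as sets of tempered distributions. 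Chaining the two pairs of inequalities through the common middle space then gives
\[
(C_1C_2)^{-1}\,\|f\|_{H^p_w(\R,\varphi_2)}\le\|f\|_{H^p_w(\R,\varphi_1)}\le C_1C_2\,\|f\|_{H^p_w(\R,\varphi_2)},
\]
which is exactly the claimed equivalence of quasi-norms; in other words, the space $H^p_w(\R,\varphi)$ does not depend on the particular admissible $\varphi$.

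I expect the only genuinely substantive point — the content really being imported through Proposition \ref{pro2.1} — to be the fact that, for $w\in A_\infty$, the weighted radial maximal Hardy space attached to an \emph{arbitrary} admissible $\varphi$ coincides, with equivalent quasi-norms, with the weighted Hardy space considered in \cite{Cao2016}; this is the weighted counterpart of the classical Fefferman--Stein maximal-function characterizations, and it is what makes the above ``chaining'' legitimate for every $\varphi$. If one preferred not to lean on this reading of Proposition \ref{pro2.1}, an alternative is to invoke directly the equivalence of the radial, nontangential and grand maximal function quasi-norms in the weighted setting (see \cite{2017Ruan} and the references therein), which makes $\|M_{\varphi_1}^+f\|_{L^p_w}$ and $\|M_{\varphi_2}^+f\|_{L^p_w}$ each comparable to the grand maximal quasi-norm and hence to one another; the remainder of the argument is then purely formal.
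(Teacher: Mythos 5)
Your proposal is correct and follows essentially the same route as the paper, which derives Proposition \ref{pro2.5} directly from Proposition \ref{pro2.1} by using $H^p_{w,{\rm Hilb}}(\R)$ (whose quasi-norm involves no auxiliary Schwartz function) as the common intermediary; you have simply made the chaining explicit. The one point you rightly flag — that Proposition \ref{pro2.1} must be read as valid for each admissible $\varphi$, with constants depending on $\varphi$ — is exactly the content the paper also implicitly imports from \cite{Cao2016}.
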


\begin{definition}\label{def2.5}\cite[Page 5]{2017Ruan}
Let $0<p\leq 1 \leq r<\infty, w \in A_r, \gamma=\left[n\left(r_w / p-1\right)\right]$ and $1 / r+1 / r^{\prime}=1$, where $r_w$ be the critical index of $w$.
\begin{enumerate}
\item[(i)] {\it The weighted Campanato-Morrey space} $\mathcal{L}_w^{p, r}\left(\mathbb{R}\right)$ is defined to be the collection of all functions satisfying \(\|f\|_{\mathcal{L}_w^{p,r}(\R)}<\infty\), where
$$
\|f\|_{\mathcal{L}_w^{p,r}(\R)}:= \sup _{0<R<\infty, x_0 \in \mathbb{R}^n} \frac{1}{w\left(B\left(x_0, R\right)\right)^{\frac{1}{p}-1}}\left(\int_{B\left(x_0, R\right)}\left|\frac{f(x)-P_{B, f}(x)}{w(x)}\right|^{r^{\prime}} \frac{w(x) d x}{w\left(B\left(x_0, R\right)\right)}\right)^{\frac{1}{r}^{\prime}},
$$
where $P_{B, f}$ is the unique polynomial of degree $\leq m$ such that
$$
\int_{B\left(x_0, R\right)}\left(f(x)-P_{B, f}(x)\right) x^{m} d x=0
$$
for all  $m=0,1,2\cdots$.

\item[(ii)] If $f \in C^m\left(\mathbb{R}\right)$, replacing the polynomial $P_{B, f}(x)$ by the Taylor polynomial
$$
\widetilde{P}_{B, f}(x):=\sum_{k=0}^m \frac{d^m f}{dx^m} \left(x_0\right)\left(x-x_0\right)^{m} /(m!)
$$
in the above (i), we denote the new space by $\widetilde{\mathcal{L}}_w^{p, r}\left(\mathbb{R}\right)$.
\end{enumerate}
\end{definition}

\begin{proposition}\label{rem2.1}\cite[Remark 2.3]{2017Ruan}
If $0<p\leq 1 \leq r<\infty$ and $w \in A_r$, then we have the following facts:
\begin{enumerate}
\item[(i)] The dual space of $H_w^p\left(\mathbb{R}\right)$ is the space $\mathcal{L}_w^{p, r}\left(\mathbb{R}\right)$ and they have an $\left(H_w^p, \mathcal{L}_w^{p, r}\right)$ pair inequality
$$
|\langle f, g \rangle | \lesssim \|f\|_{H_w^p(\R)}\|g\|_{\mathcal{L}_w^{p, r}(\R)}
$$
for any $f \in H_w^p\left(\mathbb{R}\right)$ and $g \in \mathcal{L}_w^{p, r}\left(\mathbb{R}\right)$.

\item[(ii)] $\widetilde{\mathcal{L}}_w^{p, r}\left(\mathbb{R}\right) \subset \mathcal{L}_w^{p, r}\left(\mathbb{R}\right)$ and
$$
\|f\|_{\mathcal{L}_w^{p, r}(\R)} \lesssim \|f\|_{\widetilde{\mathcal{L}}_w^{p, r}(\R)}.
$$
\end{enumerate}
\end{proposition}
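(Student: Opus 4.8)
The plan is to treat the two assertions separately. Throughout write $B=B(x_0,R)$, let $\gz=[n(r_w/p-1)]$ be the moment order fixed in Definition \ref{def2.5} (here $n=1$), and abbreviate $P_B:=P_{B,f}$. I will use the standard fact that, since $w\in A_r$, the space $H_w^p(\R)$ admits a $(p,r,w)$-atomic decomposition: every $f\in H_w^p(\R)$ can be written as $f=\sum_j\lz_j a_j$ (convergence in $\mathscr{S}'(\R)$), where each atom $a_j$ is supported in a ball $B_j$, satisfies $\|a_j\|_{L^r_w(\R)}\le w(B_j)^{1/r-1/p}$ and $\int_\R a_j(x)x^k\,dx=0$ for $0\le k\le\gz$, and $\sum_j|\lz_j|^p\ls\|f\|_{H_w^p(\R)}^p$.

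\textbf{Part (i).} I would first reduce the pairing inequality to a single estimate on atoms. For an atom $a$ supported in $B$ and $g\in\mathcal{L}_w^{p,r}(\R)$, the cancellation of $a$ against the degree-$\le\gz$ polynomial $P_{B,g}$ gives $\la a,g\ra=\int_B a\,(g-P_{B,g})\,dx$. Writing the integrand as $(aw^{1/r})\bigl((g-P_{B,g})w^{-1/r}\bigr)$ and applying Hölder with exponents $r,r'$ splits it as $\|a\|_{L^r_w(\R)}$ times $\bigl(\int_B|(g-P_{B,g})/w|^{r'}w\,dx\bigr)^{1/r'}$. The size condition bounds the first factor by $w(B)^{1/r-1/p}$, while the definition of $\|\cdot\|_{\mathcal{L}_w^{p,r}}$ bounds the second by $w(B)^{1/r'+1/p-1}\|g\|_{\mathcal{L}_w^{p,r}(\R)}$; since $1/r+1/r'=1$ the powers of $w(B)$ cancel and one obtains $|\la a,g\ra|\le\|g\|_{\mathcal{L}_w^{p,r}(\R)}$. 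Summing the decomposition and using $0<p\le1$, so that $\sum_j|\lz_j|\le(\sum_j|\lz_j|^p)^{1/p}\ls\|f\|_{H_w^p(\R)}$, yields the claimed pairing inequality. To identify $\mathcal{L}_w^{p,r}(\R)$ as the entire dual of $H_w^p(\R)$, I would run the usual Fefferman--Stein scheme: restrict a bounded functional to the atoms supported in a fixed ball $B$ (a subspace of $L^r_w(B)$ with cancellation), represent it through Riesz representation by a function $g_B$, check compatibility as $B$ increases, and verify that the limiting $g$ lies in $\mathcal{L}_w^{p,r}(\R)$ by testing against normalized atoms.

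\textbf{Part (ii).} Fix $f\in\til{\mathcal{L}}_w^{p,r}(\R)$ and a ball $B$. The plan is to compare the two defining polynomials. I set $Q:=P_{B,f}-\til P_{B,f}$, a polynomial of degree $\le\gz$, and write $f-P_{B,f}=(f-\til P_{B,f})-Q$; by the triangle inequality in $L^{r'}(B,w^{1-r'}dx)$ it suffices to bound the weighted $L^{r'}$-size of $Q$, since the $f-\til P_{B,f}$ term contributes exactly $\|f\|_{\til{\mathcal{L}}_w^{p,r}(\R)}$. Using that the orthogonality in Definition \ref{def2.5} may equally be imposed against the shifted monomials $(x-x_0)^k$, the moment condition defining $P_{B,f}$ gives $\int_B Q(x)(x-x_0)^k\,dx=\int_B\bigl(f(x)-\til P_{B,f}(x)\bigr)(x-x_0)^k\,dx$ for $0\le k\le\gz$, and the right-hand side is controlled by $\|f\|_{\til{\mathcal{L}}_w^{p,r}(\R)}$ through Hölder's inequality exactly as in Part (i). It then remains to recover the weighted size of $Q$ from its moments.

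\textbf{Main obstacle.} The crux is this last step, a uniform polynomial estimate: for a polynomial $Q$ of degree $\le\gz$ one must show that $\bigl(\int_B|Q/w|^{r'}w\,dx\bigr)^{1/r'}$ is dominated by its moments $\int_B Q(x)(x-x_0)^k\,dx$, with a constant independent of the ball $B$. I would obtain this by rescaling $B$ to the unit ball, where on the finite-dimensional space of polynomials of degree $\le\gz$ the sup-norm is comparable to the vector of moments, and then transferring back using the doubling and reverse-Hölder properties of the $A_r$ weight $w$ to convert $\sup_B|Q|$ and $\int_B w^{1-r'}dx$ into the required powers of $w(B)$ and $|B|$. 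Combining this bound with the moment identity produces $\|f\|_{\mathcal{L}_w^{p,r}(\R)}\ls\|f\|_{\til{\mathcal{L}}_w^{p,r}(\R)}$, which is the asserted inclusion. The converse duality in Part (i) is the only other genuinely delicate point, for the same finite-dimensional and weight-theoretic reasons.
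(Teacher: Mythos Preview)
The paper does not prove this proposition; it is imported verbatim from \cite[Remark 2.3]{2017Ruan} and used as a black box in the proofs of Theorems \ref{t1.6}, \ref{t1.7} and \ref{t1.1}. There is therefore no in-paper argument to compare against.

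Your outline is the standard route and is correct. For (i), the $(p,r,w)$-atomic decomposition of $H_w^p(\R)$ together with the cancellation-then-H\"older computation on a single atom gives exactly $|\la a,g\ra|\le\|g\|_{\mathcal{L}_w^{p,r}}$, and the $\ell^p\hookrightarrow\ell^1$ embedding for $0<p\le1$ sums it up; the converse via Riesz representation on each ball is the classical Garc\'ia-Cuerva argument in the weighted setting. For (ii), your reduction to bounding the polynomial $Q=P_{B,f}-\widetilde P_{B,f}$ through its (unweighted) moments against $(x-x_0)^k$ is right, and the ``main obstacle'' you flag is resolved precisely as you say: after rescaling to the unit ball, finite-dimensionality gives $\sup_B|Q|\lesssim R^{-1}\max_k R^{-k}\bigl|\int_B Q(x)(x-x_0)^k\,dx\bigr|$, and the $A_r$ condition yields $\bigl(\int_B w^{1-r'}\bigr)^{1/r'}\lesssim |B|\,w(B)^{-1/r}$, so the powers of $R$ and $w(B)$ match up to give $\bigl(\int_B|Q/w|^{r'}w\bigr)^{1/r'}\lesssim w(B)^{1/p-1/r}\|f\|_{\widetilde{\mathcal{L}}_w^{p,r}}$, which is exactly what is needed. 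The only point worth making explicit in a full write-up is the passage $\la f,g\ra=\sum_j\lz_j\la a_j,g\ra$, which is handled by first working with finite atomic sums (dense in $H_w^p$) and then passing to the limit.
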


\section{Main results \label{s3}}

In this section,  we will discuss the boundedness of $h_{\Phi,\beta}$ on weighted Lebesgue spaces and power-weighted Hardy spaces.

\subsection{$h_{\Phi,\beta}:L^p_v(\R)\to L^q_u(\R)$ \label{s3.1}}

\begin{theorem}\label{t1.9}
Let \(1 < p,q < \infty\), \(0\leq\beta<1\), \(q > \frac{1}{1 - \beta}\) and \(\frac{1}{p}-\frac{1}{q}=\beta\). Let \(u\) and \(v\) be weight functions on \(\mathbb{R}\), which are even functions and increasing on \(\mathbb{R}_+\). Let \(\Phi\) be a non-negative function on \(\mathbb{R}\) satisfying
\begin{equation}\label{eq3.18}
 \int_{|t|\leq1} \Phi(t)^{\frac{1}{1-\beta}} |t|^{\frac{1}{q(1-\beta)}-1} dt 
< \infty,
\end{equation}
and for any \(|t|\geq1\), there exist constants \(C_1\geq0\) and \(C_2\geq0\) such that
\begin{equation}\label{eq2.3}
\frac{C_1}{|t|^{1-\beta}}\leq \Phi(t) \leq \frac{C_2}{|t|^{1-\beta}}.
\end{equation}
Then for all \(f\in L^p_v(\mathbb{R})\), the inequality
\begin{equation}\label{eq2.2}
\|h_{\Phi,\beta}(f)\|_{L^q_u(\mathbb{R})}\leq C \|f\|_{L^p_v(\mathbb{R})}
\end{equation}
holds if and only if
\begin{equation}\label{eq3.9}
    A:=
    \sup_{\alpha>0} \left(\int_{|x|\geq\alpha} \frac{u(x)}{|x|^{q(1-\beta)}}dx\right)^\frac{1}{q} \left( \int_{|x|\leq\alpha} v(x)^{1-p'} dx\right)^\frac{1}{p'}<\infty.
\end{equation}
Besides, if \(C>0\) is the best constant in \((\ref{eq2.2})\), then
\[
C_1 A \leq C \leq 2^\frac{1}{q'} A\left\{  K_{\Phi,\beta,q}2^{\beta-1} [q(1-\beta)-1]^\frac{1}{q}(1+2^\frac{1}{q'})+C_2  (p')^\frac{1}{p'}p^\frac{1}{q} \right\} ,
\]
where 
$$
K_{\Phi,\beta,q}:=\left( \int_{\mathbb{R}} \Phi(t)^{\frac{1}{1-\beta}} |t|^{\frac{1}{q(1-\beta)}-1} dt \right)^{1-\beta}<\infty.
$$
\end{theorem}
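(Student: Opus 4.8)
The plan is to prove both implications after reducing to the half–line and peeling off the ``Hardy part'' of the kernel. Since $u,v$ are even and $\Phi\geq0$, one has for $x>0$ that $|h_{\Phi,\beta}f(x)|\leq\int_0^\infty\frac{\Phi(x/y)}{y^{1-\beta}}\big(|f(y)|+|f(-y)|\big)\,dy$, and symmetrically for $x<0$ with $\Phi(-\cdot)$ in place of $\Phi$ (which satisfies the same hypotheses \eqref{eq3.18}, \eqref{eq2.3}); so it suffices to bound, for $g\geq0$ on $\R_+$, the operator $g\mapsto\int_0^\infty\frac{\Phi(x/y)}{y^{1-\beta}}g(y)\,dy$ from $L^p_v(\R_+)$ to $L^q_u(\R_+)$. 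I would then write $\Phi=\Phi_1+\Phi_2$ with $\Phi_1:=\Phi\chi_{\{|t|\geq1\}}$, $\Phi_2:=\Phi\chi_{\{|t|<1\}}$, so $h_{\Phi,\beta}=h_{\Phi_1,\beta}+h_{\Phi_2,\beta}$.

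For the sufficiency of \eqref{eq3.9}, the piece $h_{\Phi_1,\beta}$ is a Hardy operator: on $\R_+$ it equals $\int_0^x\frac{\Phi(x/y)}{y^{1-\beta}}g(y)\,dy$, which by the upper bound in \eqref{eq2.3} is at most $C_2\,x^{\beta-1}\int_0^x g(y)\,dy$. Since $\frac1p-\frac1q=\beta$ and $1<p\leq q<\infty$, the boundedness $L^p_v(\R_+)\to L^q_u(\R_+)$ of $g\mapsto x^{\beta-1}\int_0^x g$ is classical and is governed precisely by the supremum in \eqref{eq3.9}, with operator norm at most $p^{1/q}(p')^{1/p'}$ times that supremum; this yields the summand $C_2(p')^{1/p'}p^{1/q}$. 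The core is $h_{\Phi_2,\beta}$. After the substitution $t=x/y$ it becomes $\int_0^1\frac{\Phi_2(t)}{t^{1+\beta}}x^\beta g(x/t)\,dt$. When $\beta>0$ I would apply H\"older's inequality in $t$ with the conjugate exponents $\frac1{1-\beta}$ and $\frac1\beta$, distributing the power of $t$ so that the $\Phi_2$-factor becomes $\big(\int_0^1\Phi_2(t)^{1/(1-\beta)}t^{1/(q(1-\beta))-1}\,dt\big)^{1-\beta}\leq K_{\Phi,\beta,q}$; here the constraint $q>\frac1{1-\beta}$, equivalently $q(1-\beta)-1=q/p'>0$, is exactly what forces $K_{\Phi,\beta,q}<\infty$ (it controls the $|t|\geq1$ part of that integral). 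Combining this with Minkowski's integral inequality for the $L^q_u$-norm, the monotonicity of $u$ and $v$ (to compare $u(ts),v(ts)$ with $u(s),v(s)$ for $t<1$) and the two-weight condition \eqref{eq3.9}, the remaining estimate reduces to a weighted Hardy-type inequality, and the constant that comes out is $K_{\Phi,\beta,q}\,2^{\beta-1}[q(1-\beta)-1]^{1/q}(1+2^{1/q'})$ times the supremum in \eqref{eq3.9}. When $\beta=0$ this step simplifies: \eqref{eq3.9} then forces $u\lesssim v$ pointwise, so Minkowski's inequality after the substitution together with \eqref{eq3.18} (which reads $\int_{|t|\leq1}\Phi(t)|t|^{1/q-1}\,dt<\infty$) already gives the bound — this is how the theorem recovers the non-fractional results of Bandaliyev--Safarova. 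Collecting the two pieces and unwinding the reduction to $\R_+$ (which contributes the overall factor $2^{1/q'}$) gives the stated upper bound for $C$.

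For necessity and the lower bound $C\geq C_1A$, I would fix $\alpha>0$ and test \eqref{eq2.2} on $f_\alpha:=v^{1-p'}\chi_{\{|y|\leq\alpha\}}$ (if $v^{1-p'}$ is not locally integrable, first truncate it at level $N$ on $\{|y|\leq\alpha\}$ and let $N\to\infty$ at the end). For $|x|\geq\alpha$ every $y$ with $|y|\leq\alpha$ satisfies $\big|\,|y|^{-1}x\,\big|\geq1$, so the lower bound in \eqref{eq2.3} gives $h_{\Phi,\beta}f_\alpha(x)\geq C_1|x|^{\beta-1}\int_{|y|\leq\alpha}v(y)^{1-p'}\,dy$. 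Inserting this into \eqref{eq2.2}, using $\|f_\alpha\|_{L^p_v(\R)}^p=\int_{|y|\leq\alpha}v^{1-p'}$, and cancelling a common factor gives
\[
C_1\Big(\int_{|x|\geq\alpha}\frac{u(x)}{|x|^{q(1-\beta)}}\,dx\Big)^{1/q}\Big(\int_{|y|\leq\alpha}v(y)^{1-p'}\,dy\Big)^{1/p'}\leq C .
\]
Taking the supremum over $\alpha>0$ then shows both that $A<\infty$ and that $C_1A\leq C$, completing the equivalence.

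The step I expect to be hardest is the estimate for $h_{\Phi_2,\beta}$ when $\beta>0$. One cannot simply move the $L^q_u$-norm inside the $dt$-integral and bound each dilate, since multiplication by $x^\beta$ is not a bounded map $L^p_v\to L^q_u$ when $q>p$; and decoupling $\Phi_2$ from $g$ by a single H\"older step is also too lossy — it produces a pointwise majorant of the shape $K_{\Phi,\beta,q}\,|x|^{-1/q}\big(\int_{|y|\geq|x|}|y|^{p/(q-p)}|g(y)|^{1/\beta}\,dy\big)^{\beta}$ whose $L^q_u$-norm can be infinite near the origin. The correct argument must interleave the H\"older step producing $K_{\Phi,\beta,q}$ with the monotonicity of the weights and the two-weight Hardy inequality, and, because sharp constants are asserted, one has to carry the factors $[q(1-\beta)-1]^{1/q}=(q/p')^{1/q}$, $2^{\beta-1}$ and $1+2^{1/q'}$ through the computation.
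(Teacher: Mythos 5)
Your necessity argument is exactly the paper's: test on $f_\alpha=v^{1-p'}\chi_{\{|y|\leq\alpha\}}$, use the lower bound in \eqref{eq2.3} on $\{|x|\geq\alpha,\ |y|\leq\alpha\}$, and cancel; that part is fine. The gap is in sufficiency. You decompose the \emph{kernel}, $\Phi=\Phi_1+\Phi_2$, and while the $\Phi_1$ piece is correctly reduced to the two-weight Hardy inequality (giving the $C_2(p')^{1/p'}p^{1/q}$ term), the $\Phi_2$ piece --- which you yourself identify as the core --- is never actually proved. You show that the two natural attacks fail (Minkowski-first fails because $x^\beta$ is not bounded $L^p_v\to L^q_u$ when $q>p$; a single H\"older step produces the lossy majorant $|x|^{-1/q}\bigl(\int_{|y|\geq|x|}|y|^{1/(q\beta)}|g|^{1/\beta}\bigr)^{\beta}$), and then assert that ``the correct argument must interleave'' H\"older, monotonicity and \eqref{eq3.9} and that the constant $K_{\Phi,\beta,q}2^{\beta-1}[q(1-\beta)-1]^{1/q}(1+2^{1/q'})$ ``comes out.'' No such interleaved argument is supplied, and the constant is reverse-engineered rather than derived; as written, the sufficiency direction is incomplete precisely at its hardest point.

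The mechanism the paper uses to get past this is different and is the missing idea. First, \eqref{eq3.9} together with the evenness and monotonicity of $u,v$ yields the \emph{pointwise} domination $u(t)\leq 2^{(\beta-1)q}A^q[q(1-\beta)-1]\,v(t)^{q/p}$ (test the two factors in \eqref{eq3.9} against the constant weights on $\{|x|\geq t\}$ and $\{|x|\leq t\}$). Second, since $\tfrac1p-\tfrac1q=\beta$, Young's inequality on the multiplicative group $(\R_+,\tfrac{dx}{x})$ gives the \emph{unweighted} bound $\|h_{\Phi,\beta}f\|_{L^q}\leq 2^{1/q'}K_{\Phi,\beta,q}\|f\|_{L^p}$ (Lemma \ref{t3.4}), with $K_{\Phi,\beta,q}<\infty$ exactly because $q(1-\beta)>1$. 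The proof then decomposes the \emph{weight}, $u(x)=u(0)+\int_{|s|\leq|x|}\Psi(s)\,ds$, rather than the kernel: the $u(0)$ term and, after Fubini and generalized Minkowski (using $q/p\geq1$), the contribution of $f\chi_{\{|y|\geq|t|\}}$ are absorbed by the unweighted bound plus the pointwise domination (this is where $2^{\beta-1}[q(1-\beta)-1]^{1/q}(1+2^{1/q'})$ arises), while the contribution of $f\chi_{\{|y|\leq|t|\}}$ reduces via the upper bound in \eqref{eq2.3} to the Hardy operator with the transferred weight $\Psi(t)|t|$, to which Lemma \ref{t3.5} applies. Without the pointwise comparison $u\lesssim A^qv^{q/p}$ and the unweighted Young bound, your kernel decomposition has no way to close the estimate for the small-$|t|$ part of $\Phi$, so you would need to either adopt this route or produce the interleaved argument you only allude to.
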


\begin{theorem}\label{t1.10}
    Let \(1<p, q<\infty\), \(0\leq\beta<1\), \(p'>\frac{1}{1-\beta}\) and \(\frac{1}{p}-\frac{1}{q}=\beta\). Let \(u\) and \(v\) be weight functions on \(\R\), which are even functions and decreasing on \(\R_+\). Let \(\Phi\) be a non-negative function on \(\R\) satisfying
    \begin{equation}\label{eq2.7}
    \int_{|t|\geq1} \Phi(t)^\frac{1}{1-\beta} |t|^{\frac{1}{q(1-\beta)}-1} dt
     <\infty,
	\end{equation}
    and for all \(|t|\leq 1\), there exist constants \(C_1'\geq0\) and \(C_2'\geq0\) such that
    \begin{equation}\label{eq2.4}
        C_1'\leq \Phi(t) \leq C_2'.
    \end{equation}
    Then for all \(f\in L^p_v(\R)\), the inequality
    \begin{equation}\label{eq2.5}
        \|h_{\Phi,\beta}(f)\|_{L^q_u(\R)}\leq C \|f\|_{L^p_v(\R)}
    \end{equation}
    holds if and only if
    \begin{equation}\label{eq3.11}
    B:=
    \sup_{\alpha>0} \left(\int_{|x|\leq\alpha} u(x) dx\right)^\frac{1}{q} \left( \int_{|x|\geq\alpha} \frac{v(x)^{1-p'}}{|x|^{(1-\beta)p'}} dx\right)^\frac{1}{p'}<\infty.
    \end{equation}
    Besides, if \(C>0\) is the best constant in \((\ref{eq2.5})\), then
\[
C_1' B \leq C \leq  2^\frac{1}{q'} B \left\{  K_{\Phi,\beta,q } 2^{\beta-1} [p'(1-\beta)-1]^\frac{1}{p'}  (1+2^\frac{1}{q'}) +C_2'  (p')^\frac{1}{p'} p^\frac{1}{q} \right\}.
\]
\end{theorem}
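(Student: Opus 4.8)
The plan is to prove Theorem~\ref{t1.10} by the scheme used for Theorem~\ref{t1.9}, interchanging the two regions $\{|x/|y||\le1\}=\{|y|\ge|x|\}$ and $\{|x/|y||\ge1\}=\{|y|<|x|\}$ and the two Muckenhoupt-type conditions; the reversed monotonicity hypothesis ($u,v$ now \emph{decreasing} on $\R_+$) is exactly what keeps the estimates valid after the interchange. Since $u,v$ are even, every weighted integral over $\R$ equals, up to a power of $2$, an integral over $(0,\infty)$, which — together with the symmetrization $y\mapsto|f(y)|+|f(-y)|$ — is responsible for the $2^{1/q'},2^{\beta-1},2^{1/q}$ factors in the constants. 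Note also that \eqref{eq2.4}--\eqref{eq2.7} force $K_{\Phi,\beta,q}<\infty$: near $t=0$, $\Phi(t)^{1/(1-\beta)}|t|^{1/(q(1-\beta))-1}\le (C_2')^{1/(1-\beta)}|t|^{1/(q(1-\beta))-1}$, integrable since $1/(q(1-\beta))-1>-1$.

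\emph{Necessity.} Assume \eqref{eq2.5} with best constant $C$. For $\alpha>0$, $N>\alpha$, put $f_{\alpha,N}(y):=v(y)^{1-p'}|y|^{-(1-\beta)(p'-1)}\chi_{\{\alpha\le|y|\le N\}}(y)$. If $|x|\le\alpha$ and $|y|\ge\alpha\ge|x|$ then $|x/|y||\le1$, so \eqref{eq2.4} gives $\Phi(x/|y|)\ge C_1'$, whence (using $(p'-1)+1=p'$ in the exponent)
\[
h_{\Phi,\beta}(f_{\alpha,N})(x)\ge C_1'\int_{\alpha\le|y|\le N}\frac{f_{\alpha,N}(y)}{|y|^{1-\beta}}\,dy=C_1'\,V_N(\alpha),\qquad V_N(\alpha):=\int_{\alpha\le|y|\le N}\frac{v(y)^{1-p'}}{|y|^{(1-\beta)p'}}\,dy .
\]
A short computation using $pp'=p+p'$ gives $\|f_{\alpha,N}\|_{L^p_v}=V_N(\alpha)^{1/p}$. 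Keeping only $|x|\le\alpha$ on the left side of \eqref{eq2.5} and dividing, $C_1'\big(\int_{|x|\le\alpha}u\big)^{1/q}V_N(\alpha)^{1/p'}\le C$; letting $N\to\infty$ and taking $\sup_{\alpha>0}$ yields $C_1'B\le C$ (so $B<\infty$ when $C_1'>0$).

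\emph{Sufficiency.} Assume $B<\infty$ and split $h_{\Phi,\beta}(f)=G_1+G_2$, with $G_1(x):=\int_{|y|\ge|x|}\frac{\Phi(x/|y|)}{|y|^{1-\beta}}f(y)\,dy$ and $G_2$ the integral over $\{|y|<|x|\}$. On the support of $G_1$, $|x/|y||\le1$, so \eqref{eq2.4} gives $|G_1(x)|\le C_2'\int_{|y|\ge|x|}|f(y)|\,|y|^{\beta-1}\,dy$; by evenness this is (up to a $2$-power) the adjoint Hardy operator $Sg(r)=\int_r^\infty g$ on $(0,\infty)$ applied to $|y|^{\beta-1}(|f(y)|+|f(-y)|)$, and with the substitution $\tilde v=|y|^{(1-\beta)p}v$ (so $\tilde v^{1-p'}=|y|^{-(1-\beta)p'}v^{1-p'}$) the Muckenhoupt constant becomes $2^{\beta-1}B$; the sharp weighted Hardy inequality then gives $\|G_1\|_{L^q_u}\le 2^{1/q'}C_2'(p')^{1/p'}p^{1/q}B\,\|f\|_{L^p_v}$. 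For $G_2$ one substitutes $t=x/|y|$ (tracking the sign of $y$) to get, for $x>0$, $G_2(x)=x^{\beta}\int_{|t|>1}|t|^{-(1+\beta)}\big(\Phi(t)f(x/t)+\Phi(-t)f(-x/t)\big)\,dt$, applies Minkowski's integral inequality in $L^q_u$, and uses that $u,v$ are decreasing on $\R_+$ (since $|x/t|\le|x|$ for $|t|>1$, the dilated weights $u(x/t),v(x/t)$ dominate $u(x),v(x)$). Redistributing $|t|^{-(1+\beta)}$ and applying a H\"older inequality tuned to the exponent $\tfrac1{1-\beta}$ produces the factor $\big(\int_{|t|\ge1}\Phi(t)^{1/(1-\beta)}|t|^{1/(q(1-\beta))-1}dt\big)^{1-\beta}\le K_{\Phi,\beta,q}$ (this is where \eqref{eq2.7} enters), the factor $\big(\int_1^\infty|t|^{-(1-\beta)p'}dt\big)^{1/p'}=[(1-\beta)p'-1]^{-1/p'}$ (finite precisely because $p'>\tfrac1{1-\beta}$), and — via the monotonicity reduction together with a second use of the weighted Hardy inequality applied to the surviving $f$-integral — the factor $B$; this yields $\|G_2\|_{L^q_u}\le 2^{1/q'}B\,K_{\Phi,\beta,q}\,2^{\beta-1}[(1-\beta)p'-1]^{1/p'}(1+2^{1/q'})\,\|f\|_{L^p_v}$. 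Adding the $G_1$ and $G_2$ bounds gives \eqref{eq2.5} with the stated value of $C$, and together with necessity this proves the equivalence.

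\emph{Main obstacle.} The $G_2$ estimate is the delicate point: one must extract the $K_{\Phi,\beta,q}$ factor (from $\Phi$, which on $\{|t|\ge1\}$ is only $L^{1/(1-\beta)}$-integrable against $|t|^{1/(q(1-\beta))-1}$, never pointwise bounded) and the factor $B$ \emph{simultaneously}, while the presence of $\beta$ makes the redistribution of powers of $|t|$ and the H\"older/monotonicity step more intricate than in the case $\beta=0$; the constraint $p'>\tfrac1{1-\beta}$ is genuinely needed — it is exactly what makes $\int_1^\infty|t|^{-(1-\beta)p'}dt$ converge — and holds automatically when $\beta=0$. Everything else is bookkeeping: propagating each constant through the two regions and the even/odd decomposition so as to land precisely on the two-term bound for $C$. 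By contrast, the $G_1$ bound is a routine application of the weighted Hardy inequality and the necessity is a routine test-function computation.
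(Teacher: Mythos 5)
Your necessity argument is essentially the paper's (same test function, up to a harmless truncation), and your $G_1$ bound for the region $\{|y|\ge|x|\}$ is a correct direct application of the two-weight Hardy inequality (Lemma \ref{t3.6}). The problem is the $G_2$ term over $\{|y|<|x|\}$, which you yourself flag as the delicate point but only sketch, and the sketched mechanism does not close. After Minkowski's integral inequality and the monotonicity substitution $s=x/t$ (which uses $u(ts)\le u(s)$ for $|t|>1$), you are left with
\[
\|G_2\|_{L^q_u}\ \le\ \Bigl(\int_{|t|\ge1}\Phi(t)\,|t|^{\frac1q-1}\,dt\Bigr)\,\bigl\||s|^{\beta}f\bigr\|_{L^q_u},
\]
and both factors are fatal. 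First, the $t$-integral cannot be controlled by $K_{\Phi,\beta,q}$: H\"older with exponent $\tfrac1{1-\beta}$ forces the complementary factor to be exactly $\bigl(\int_{|t|\ge1}|t|^{-1}dt\bigr)^{\beta}=\infty$ (any other redistribution of the powers of $|t|$ either loses the match with \eqref{eq2.7} or keeps the complementary integral divergent), so condition \eqref{eq2.7} is not usable this way. Second, even granting that, you land on an $L^q$-norm of $f$, and there is no embedding $L^p_v\hookrightarrow L^q_{|\cdot|^{\beta q}u}$ for $p<q$ under $B<\infty$ alone; the $L^p\to L^q$ gain must come from an averaging inequality applied to the operator, not from a pointwise domination. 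A related symptom: you claim the factor $[p'(1-\beta)-1]^{1/p'}$ arises as $\bigl(\int_1^\infty|t|^{-(1-\beta)p'}dt\bigr)^{1/p'}$, but that integral equals $[p'(1-\beta)-1]^{-1/p'}$; in fact this factor enters with the opposite sign and from a different source.

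The missing ideas are the ones the paper's proof is built on: (i) testing \eqref{eq3.11} at a single radius together with the monotonicity and evenness of $u,v$ yields the pointwise comparison $u(x)\le B^q2^{q(\beta-1)}[(1-\beta)p'-1]^{q/p'}v(x)^{q/p}$; (ii) the decreasing even weight is written in layer-cake form $u(x)=u(\infty)+\int_{|t|\ge|x|}\Psi(t)\,dt$, and Fubini plus Minkowski in $L^{q/p}$ reduce the estimate on each layer to a \emph{constant-weight} estimate; (iii) on each layer the region $\{|y|\le|t|\}$ (your $G_2$-type region) is handled by the unweighted $L^p\to L^q$ Young-type inequality on the multiplicative group $(\R_+,\tfrac{dt}{t})$ (Lemma \ref{t3.4}), which is where $K_{\Phi,\beta,q}$ and the genuine $p\to q$ improvement actually enter, after which (i) converts the constant weight back into $v$. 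Only the region $\{|y|\ge|t|\}$ is treated by the two-weight Hardy inequality, as in your $G_1$. Without steps (i)--(iii), your $G_2$ bound, and hence the sufficiency direction and the explicit upper bound for $C$, is not established.
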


\begin{remark}\label{rem2.6}
When \(\beta = 0\) and restrict \(\mathbb{R}\) to \(\mathbb{R}_+\), Theorem \ref{t1.9} and Theorem \ref{t1.10} coincide with Theorem 2.3 and Theorem 2.4 in \cite{2021Safa}, respectively.   
\end{remark}

\begin{remark}\label{rem2.9}
$|t|^{\beta-1}\chi_{(1,\infty)}(|t|)$ and $\chi_{(0,1]}(|t|)$ are specific examples of $\Phi$ in Theorem \ref{t1.9} and Theorem \ref{t1.10}.
\end{remark}

\subsection{$h_{\Phi,\beta}:H^p_{|\cdot|^\alpha}(\R)\to H^q_{|\cdot|^\gamma}(\R)$\label{s3.2}}
    Let \(0<p,q<\infty\), \(-1<\alpha,\gamma<\infty\) and \(0<\beta<1\). When $h_{\Phi,\beta}$ is bounded from $H^p_{|\cdot|^\alpha}(\R)$ to $H^q_{|\cdot|^\gamma}(\R)$, the necessary condition of $p,q,\alpha,\gamma$ and $\beta$ are discussed in the following Theorem \ref{t1.2}.

\begin{theorem}\label{t1.2}
    Let \(0<p,q<\infty\), \(-1<\alpha,\gamma<\infty\) and \(0\leq\beta<1\). For any \(f\in H^p_{|\cdot|^\alpha}(\R)\), if
    \[
    \|h_{\Phi,\beta}f\|_{H^q_{|\cdot|^\gamma}(\R)}\lesssim\|f\|_{H^p_{|\cdot|^\alpha}(\R)},
    \]
    then
    \[
    \frac{1+\alpha}{p}-\frac{1+\gamma}{q}=\beta.
    \]
\end{theorem}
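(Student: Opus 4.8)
The plan is to extract the necessary condition $\frac{1+\alpha}{p}-\frac{1+\gamma}{q}=\beta$ from a scaling (dilation) argument, exploiting that $h_{\Phi,\beta}$ interacts with dilations in a controlled way while the power-weighted Hardy quasi-norms scale by explicit powers. First I would record the dilation behavior of the operator: for $f\in H^p_{|\cdot|^\alpha}(\R)$ and $\lambda>0$, set $f_\lambda(x):=f(\lambda x)$. A change of variables $y\mapsto\lambda^{-1}y$ inside $h_{\Phi,\beta}(f)(x)=\int_0^\infty \frac{\Phi(y^{-1}x)}{y^{1-\beta}}f(y)\,dy$ (here on $\R_+$, with the obvious even/odd bookkeeping on all of $\R$) shows that $h_{\Phi,\beta}(f_\lambda)(x)=\lambda^{-\beta}\,[h_{\Phi,\beta}(f)]_\lambda(x)$, i.e. composing with the dilation $f\mapsto f_\lambda$ just produces a factor $\lambda^{-\beta}$ together with the same dilation applied to the output. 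This is the one genuinely operator-specific computation, and the presence of $\beta$ in the exponent $n-\beta=1-\beta$ is exactly what makes the factor $\lambda^{-\beta}$ appear rather than $1$.

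Next I would record how the power-weighted Hardy quasi-norm transforms under dilation. Using the radial maximal characterization in Definition \ref{def2.1}, one has $M^+_\varphi(f_\lambda)(x)=[M^+_\varphi f](\lambda x)$ (since $\varphi_s*f_\lambda(x)=(\varphi_{\lambda s}*f)(\lambda x)$), and then a change of variables in the weighted $L^p$ integral with weight $|x|^\alpha$ gives $\|f_\lambda\|_{H^p_{|\cdot|^\alpha}(\R)}=\lambda^{-\frac{1+\alpha}{p}}\|f\|_{H^p_{|\cdot|^\alpha}(\R)}$, and similarly $\|g_\lambda\|_{H^q_{|\cdot|^\gamma}(\R)}=\lambda^{-\frac{1+\gamma}{q}}\|g\|_{H^q_{|\cdot|^\gamma}(\R)}$. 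Combining these two facts: fix any nonzero $f\in H^p_{|\cdot|^\alpha}(\R)$ with $h_{\Phi,\beta}f\neq 0$ in $H^q_{|\cdot|^\gamma}(\R)$, apply the assumed boundedness inequality to $f_\lambda$, and substitute the scaling identities to obtain
\[
\lambda^{-\beta}\,\lambda^{-\frac{1+\gamma}{q}}\,\|h_{\Phi,\beta}f\|_{H^q_{|\cdot|^\gamma}(\R)}
\;\lesssim\;\lambda^{-\frac{1+\alpha}{p}}\,\|f\|_{H^p_{|\cdot|^\alpha}(\R)}
\]
for all $\lambda>0$, with a constant independent of $\lambda$. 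Letting $\lambda\to 0^+$ and $\lambda\to\infty$ forces the powers of $\lambda$ on the two sides to match, i.e. $\beta+\frac{1+\gamma}{q}=\frac{1+\alpha}{p}$, which is the claimed identity.

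The one point requiring care — and the main obstacle — is ensuring there exists a test function $f\in H^p_{|\cdot|^\alpha}(\R)$ for which $h_{\Phi,\beta}f$ is a nonzero element of $H^q_{|\cdot|^\gamma}(\R)$, so that both quasi-norms in the displayed inequality are strictly positive and finite and the argument is not vacuous; if $h_{\Phi,\beta}f=0$ for every admissible $f$ the inequality gives no information. I would handle this by choosing $f$ to be a fixed smooth, compactly supported bump (or a suitable atom) away from the origin, for which membership in $H^p_{|\cdot|^\alpha}(\R)$ is clear and for which $h_{\Phi,\beta}f$ can be checked to be nontrivial using the lower bound in the hypotheses on $\Phi$ (as in Theorems \ref{t1.9}--\ref{t1.10}, where $\Phi$ is bounded below on a relevant region), and then verifying $h_{\Phi,\beta}f\in H^q_{|\cdot|^\gamma}(\R)$ — which, once one knows $h_{\Phi,\beta}$ maps into $H^q_{|\cdot|^\gamma}$ by the standing boundedness hypothesis itself, is automatic. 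A secondary technical nuisance is the even/odd splitting on $\R$ versus the half-line formulation of $h_{\Phi,\beta}$; this is routine bookkeeping and does not affect the scaling exponents. With the nonvacuity settled, the two-sided limit in $\lambda$ immediately yields the exponent identity and completes the proof.
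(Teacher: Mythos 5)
Your proposal is correct and is essentially the paper's own argument: the paper proves the same dilation identity $\bigl\|s^{(1+\alpha)/p}f(s\cdot)\bigr\|_{H^p_{|\cdot|^\alpha}(\R)}=\|f\|_{H^p_{|\cdot|^\alpha}(\R)}$ as a separate lemma, changes variables inside $h_{\Phi,\beta}$ to extract the factor $s^{\frac{1+\alpha}{p}-\frac{1+\gamma}{q}-\beta}$ in front of $\|h_{\Phi,\beta}(f)\|_{H^q_{|\cdot|^\gamma}(\R)}$, and concludes from the uniformity of the resulting inequality over $s\in(0,\infty)$, exactly as you do by letting $\lambda\to0^+$ and $\lambda\to\infty$. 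Your additional care about non-vacuity (choosing $f$ with $h_{\Phi,\beta}f\neq0$) is a point the paper leaves implicit but it does not alter the approach.
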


\begin{remark}\label{rem2.5}
When \(\beta = 0\), Theorem \ref{t1.2} coincides with one-dimensional case of \cite[Theorem 1.1]{2017Ruan}.
\end{remark}

\subsubsection{Method of radial maximal function\label{s3.2.1}}

Let $p,q,\alpha ,\gamma$ and $\beta$ be as in Theorem \ref{t1.6} below. For the power-weighted Hardy spaces defined via radial maximal operators, we first obtain the boundedness of $h_{\Phi,\beta}$ from $H^p_{|\cdot|^\alpha}(\R)$ to $H^{q,\infty}_{|\cdot|^\gamma}(\R)$, then by using an interpolation, we can further obtain the boundeness of $h_{\Phi,\beta}$ from $H^p_{|\cdot|^\alpha}(\R)$ to $H^q_{|\cdot|^\gamma}(\R)$.


\begin{theorem}\label{t1.6}
Let \(0<p\leq1\), \(0<q<\infty\), \(0\leq\alpha<\infty\), \(-1<\gamma<\infty\), \(0<\beta<1\) and \(\frac{1+\alpha}{p}-\frac{1+\gamma}{q}=\beta\) and let $m=\frac{1+\alpha}{p}-1$ be an integer. Suppose that $\Phi\in L^1(\R)$ with $\widehat{\Phi} \in C^{2m + 1}(\mathbb{R})$ and satisfying
    \begin{equation}\label{eq3.14}
    \int_{\R} \left| \widehat{\Phi}^{(n)}(\xi)\xi^n\right| d\xi<\infty~\text{and}~ \int_{\R} \left| \widehat{\Phi}^{(n+l)}(\xi) \xi^{n+l-m-1}\right|   d\xi<\infty
    \end{equation}
    for all \(n=0,1,\cdots,m\), $k=0,1,\cdots,n$ and \(l=0,1,\cdots,m+1-k\).
    Then, for any \(f\in H^p_{|\cdot|^\alpha}(\mathbb{R})\), we have
    \[
    \|h_{\Phi,\beta}(f)\|_{H^{q,\infty}_{|\cdot|^\gamma}(\R)}\lesssim\|f\|_{H^p_{|\cdot|^\alpha}(\R)}.
    \]
\end{theorem}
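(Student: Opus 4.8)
The plan is to reduce the weighted weak-type estimate for $h_{\Phi,\beta}$ to a statement about the dual pairing against functions in the weighted Campanato--Morrey space $\mathcal{L}_{|\cdot|^\gamma}^{q,r}(\R)$, using Proposition~\ref{rem2.1}(i) together with the standard characterization that $\|g\|_{H^{q,\infty}_w}$ is controlled by testing $\langle h_{\Phi,\beta}(f), \psi\rangle$ against suitable normalized bumps. More precisely, I would first write out the action of the adjoint: for $\psi\in\mathscr{S}(\R)$,
\[
\langle h_{\Phi,\beta}(f),\psi\rangle=\int_{\R} f(y)\,|y|^{\beta-1}\left(\int_{\R}\Phi(y^{-1}x)\psi(x)\,dx\right)dy,
\]
so that after the change of variables $x\mapsto yx$ inside the inner integral one identifies the adjoint as an average (in $y$, against the measure $|y|^{\beta-1}\,dy$ up to normalization) of dilates of $f$, paired against $\psi$. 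The key point is that $h_{\Phi,\beta}$ applied to $f\in H^p_{|\cdot|^\alpha}$ should be expressible, at the level of distributions, as a superposition
\[
h_{\Phi,\beta}(f)=\int_{\R}\Phi(t)\,|t|^{\beta-1}\, \tau_t(f)\,dt,
\]
where $\tau_t(f)(x):=|t|^{?}f(x/t)$ is a suitably normalized dilation; choosing the dilation normalization so that $\tau_t$ is an isometry on $H^p_{|\cdot|^\alpha}$ will convert the problem into checking an integral bound for the scalar weight $|t|^{\beta-1}\Phi(t)$ against the correct power of $|t|$.

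Next I would compute the $H^p_{|\cdot|^\alpha}$-norm of the dilate $\tau_t(f)$. Because the weight is a pure power $|\cdot|^\alpha$, a dilation by $t$ multiplies $w(B)$ and the Lebesgue measure by explicit powers of $|t|$, so a direct change of variables in the definition of $\|M_\varphi^+ g\|_{L^p_{|\cdot|^\alpha}}$ (using that $M_\varphi^+$ itself commutes with dilations up to rescaling $\varphi$, and invoking Proposition~\ref{pro2.5} to absorb the change of test function) yields $\|\,|t|^{-a}f(\cdot/t)\|_{H^p_{|\cdot|^\alpha}}=|t|^{b}\|f\|_{H^p_{|\cdot|^\alpha}}$ for an explicit exponent. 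The condition $\frac{1+\alpha}{p}-\frac{1+\gamma}{q}=\beta$ is exactly what makes the resulting exponent of $|t|$ match the weight $|t|^{\beta-1}\Phi(t)$, and then Minkowski's integral inequality in $H^{q,\infty}_{|\cdot|^\gamma}$ (or rather the weak analogue: the sublinearity/quasi-triangle inequality for weak Hardy quasi-norms controlled via the $(H,\mathcal{L})$ duality) reduces everything to $\int_{\R}\Phi(t)|t|^{\text{exponent}}\,dt<\infty$, which is where hypothesis \eqref{eq3.14} enters. At this stage I would translate the moment-type conditions \eqref{eq3.14} on $\widehat\Phi$ into the required pointwise/integrability control on $\Phi$ itself via the Fourier inversion formula, integrating by parts $n$ (resp.\ $n+l$) times — this is where the integer $m=\frac{1+\alpha}{p}-1$ appears, since $H^p_{|\cdot|^\alpha}$ requires vanishing moments up to order $m$, so the test polynomials $P_{B,f}$ of degree $\le m$ can be subtracted and the decay $|t|^{-(m+1)}$-type information on the kernel is needed.

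The main obstacle, I expect, is the Leibniz-formula step flagged in the introduction: because of the factor $|y|^{\beta-1}$ (equivalently the extra $|t|^{\beta-1}$ in the superposition), when one differentiates the kernel $\Phi(y^{-1}x)$ or its Fourier transform to exploit the vanishing moments of $f$, the derivatives distribute over a product involving a non-integer power, producing a proliferation of terms indexed by the triple $(n,k,l)$ that appears in \eqref{eq3.14}. Keeping track of exactly which mixed derivatives $\widehat\Phi^{(n)}$, $\widehat\Phi^{(n+l)}$ must be integrable, and with which powers of $\xi$, is the delicate bookkeeping; the cleanest route is probably to fix $\psi$ a normalized bump adapted to a ball, expand $\psi$ minus its Taylor polynomial of degree $m$, insert this into the adjoint pairing, and then bound each resulting term by $A$-type (here: $\mathcal{L}_{|\cdot|^\gamma}^{q,r}$-norm) times an explicit constant depending on the integrals in \eqref{eq3.14}. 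Once that term-by-term estimate is in place, summing over a Whitney-type or dyadic decomposition in $t$ and invoking the weak-$L^q_{|\cdot|^\gamma}$ bound for the maximal function of a superposition finishes the argument; the interpolation mentioned after the theorem statement is deferred and not needed here. I would also double-check the endpoint constraints $0\le\alpha<\infty$ and $-1<\gamma<\infty$ against \eqref{eq2} to ensure $|\cdot|^\gamma\in A_r$ for the relevant $r$, so that Proposition~\ref{rem2.1} is applicable throughout.
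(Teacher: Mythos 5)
There is a genuine gap, and it sits at the core of the argument. Your reduction hinges on writing $h_{\Phi,\beta}(f)=\int_{\R}\Phi(t)|t|^{\beta-1}\tau_t(f)\,dt$ with $\tau_t$ a normalized dilation that is an isometry on $H^p_{|\cdot|^\alpha}(\R)$. For $\beta>0$ this representation is false: the change of variables $y=x/t$ gives (up to the usual sign splitting) $h_{\Phi,\beta}(f)(x)=|x|^{\beta}\int\Phi(t)|t|^{-1-\beta}f(x/t)\,dt$, i.e.\ a superposition of dilates of the function $u\mapsto|u|^{\beta}f(u)$, not of $f$; multiplication by $|u|^{\beta}$ destroys membership in $H^p_{|\cdot|^\alpha}$, so there is no isometric-dilation structure to exploit. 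Moreover, even if such a representation held, Minkowski's inequality (and weak quasi-norms are not even normable) would only produce a bound of $\|h_{\Phi,\beta}f\|$ in the \emph{same} space as the dilates live in, i.e.\ an $H^p_{|\cdot|^\alpha}\to H^p_{|\cdot|^\alpha}$-type estimate; it cannot produce the off-diagonal bound into $H^{q,\infty}_{|\cdot|^\gamma}$ with $\frac{1+\alpha}{p}-\frac{1+\gamma}{q}=\beta>0$. The actual mechanism is different: one proves the \emph{pointwise} decay $\sup_{0<s<\infty}|\varphi_s*h_{\Phi,\beta}(f)(x)|\lesssim\|f\|_{H^p_{|\cdot|^\alpha}}|x|^{\beta-\frac{1+\alpha}{p}}=\|f\|_{H^p_{|\cdot|^\alpha}}|x|^{-\frac{1+\gamma}{q}}$, and then observes that $|x|^{-(1+\gamma)/q}$ lies exactly on the borderline of $L^{q,\infty}_{|\cdot|^\gamma}$, so the superlevel sets are balls whose $|x|^{\gamma}$-measure is $\lesssim(\|f\|/\lambda)^q$. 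This pointwise-decay-implies-weak-$L^q$ step is entirely absent from your outline, and it is the step where the scaling relation $\frac{1+\alpha}{p}-\frac{1+\gamma}{q}=\beta$ is actually consumed.

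A second, related misstep: the duality you invoke is on the wrong side. The pointwise bound above comes from pairing $f\in H^p_{|\cdot|^\alpha}$ with the kernel $y\mapsto\varphi_s*\Phi_{|y|,\beta}(x)$ (for fixed $x,s$) viewed as an element of $\mathcal{L}^{p,r}_{|\cdot|^\alpha}(\R)$, the dual of the \emph{source} space, via Proposition~\ref{rem2.1}(i) — not by testing $h_{\Phi,\beta}(f)$ against bumps dual to the target space $\mathcal{L}^{q,r}_{|\cdot|^\gamma}(\R)$; weak Hardy quasi-norms admit no such duality characterization. Once this is set up, Proposition~\ref{rem2.1}(ii) and Lemma~\ref{t3.2} reduce everything to the uniform derivative bound $\sup_{s,y}\bigl||x|^{-\beta}\tfrac{d^m}{dy^m}\bigl(y^{\beta}\int_{\R}\widehat{\Phi}(y\xi)\Psi(s\xi)e^{2\pi ix\xi}\,d\xi\bigr)\bigr|\lesssim|x|^{-m-1}$, split into the regimes $|x|\le y$ and $|x|>y$ (the latter via $m+1$ integrations by parts in $\xi$); your final paragraph correctly anticipates the Leibniz bookkeeping on the non-integer power $y^{\beta}$ and the role of hypothesis \eqref{eq3.14} there, but without the correct quantity to differentiate the rest of the argument does not assemble.
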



\begin{theorem}\label{t1.4}
    Let \(p,~q,~\alpha,~\gamma\) and \(\beta\) be as in Theorem \ref{t1.6}, $p\leq q$ and \(\frac{1+\alpha}{p}-1\) be not restricted to an integer. Let \(m=[\frac{1+\alpha}{p}-1]+1\).  Assume that there exists \(p_0>\max\{\frac{1+\alpha}{1+\beta+\gamma},1+\alpha\}\) such that
    \begin{equation}\label{eq2.1}
         \int_{\R}|\Phi(t)|^s|t|^{(1+\gamma)s/{q_0}-1} dt<\infty,
    \end{equation}
    where \(\frac{1+\alpha}{p_0}-\frac{1+\gamma}{q_0}=\beta\) and \(\frac{1}{s}=1+\frac{1}{q_0}-\frac{1}{p_0}\). Suppose that $\Phi\in L^1(\R)$ with \(\widehat{\Phi}\in C^{2m+1}(\R)\) and satisfying $(\ref{eq3.14})$. Then for all \(f\in H^p_{|\cdot|^\alpha}(\R)\),
    \[
    \|h_{\Phi,\beta}f\|_{H^q_{|\cdot|^\gamma}(\R)}\lesssim \|f\|_{H^p_{|\cdot|^\alpha}(\R)}.
    \]
	\end{theorem}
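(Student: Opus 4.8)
The plan is to reduce Theorem \ref{t1.4} to Theorem \ref{t1.6} by means of an interpolation argument. Since $p\leq q$, the target exponent $q$ is not too small, and we want to interpolate between two endpoint estimates of weak type. First I would fix the auxiliary exponents: choose $p_0$ as in the hypothesis with $p_0>\max\{\frac{1+\alpha}{1+\beta+\gamma},1+\alpha\}$, set $q_0$ by $\frac{1+\alpha}{p_0}-\frac{1+\gamma}{q_0}=\beta$, and note that the constraint $p_0>1+\alpha$ forces $\frac{1+\alpha}{p_0}-1$ to be (close to) an integer-type value, or more precisely makes $m_0:=\frac{1+\alpha}{p_0}-1$ negative/small so that the polynomial-cancellation order in Theorem \ref{t1.6} degenerates to the trivial (Lebesgue-type) case; in that regime the hypothesis $(\ref{eq2.1})$ with exponent $s$ satisfying $\frac{1}{s}=1+\frac{1}{q_0}-\frac{1}{p_0}$ is exactly the condition needed to run a direct Minkowski/Young-type estimate for $h_{\Phi,\beta}$ on the power-weighted Lebesgue space, giving
\[
\|h_{\Phi,\beta}f\|_{H^{q_0}_{|\cdot|^\gamma}(\R)}\lesssim\|f\|_{H^{p_0}_{|\cdot|^\alpha}(\R)}
\]
(when $p_0>1$ the weighted Hardy space coincides with the weighted Lebesgue space, so this is really an $L^{p_0}_{|\cdot|^\alpha}\to L^{q_0}_{|\cdot|^\gamma}$ bound, provable along the lines of Theorem \ref{t1.9}).

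Next I would set up the other endpoint. Pick $p_1$ slightly below the given $p$ with $\frac{1+\alpha}{p_1}-1=:m_1$ an integer (this is possible because $\frac{1+\alpha}{p}-1$ is not an integer, so we can nudge $p$ down to hit the nearest smaller value making the quantity integral; in fact $m_1=[\frac{1+\alpha}{p}-1]$ works, or $m=[\frac{1+\alpha}{p}-1]+1$ as declared, after adjusting which side we approach from), and let $q_1$ be determined by $\frac{1+\alpha}{p_1}-\frac{1+\gamma}{q_1}=\beta$. For this triple Theorem \ref{t1.6} applies — one must check that the smoothness hypothesis $\widehat\Phi\in C^{2m_1+1}$ and the integrability conditions $(\ref{eq3.14})$ are inherited, which they are since $2m_1+1\leq 2m+1$ and the stated $(\ref{eq3.14})$ is assumed at level $m$ — yielding
\[
\|h_{\Phi,\beta}f\|_{H^{q_1,\infty}_{|\cdot|^\gamma}(\R)}\lesssim\|f\|_{H^{p_1}_{|\cdot|^\alpha}(\R)}.
\]
Then I would invoke a real-interpolation theorem for weighted Hardy spaces (of the same flavor as the interpolation already used to pass from Theorem \ref{t1.6} to a strong-type bound in Section \ref{s3.2.1}): there is $\theta\in(0,1)$ with $\frac1p=\frac{1-\theta}{p_0}+\frac{\theta}{p_1}$, and automatically (because all triples satisfy the same dimensional balance $\frac{1+\alpha}{p_i}-\frac{1+\gamma}{q_i}=\beta$) the corresponding convex combination of $\frac1{q_0},\frac1{q_1}$ equals $\frac1q$, so interpolating the strong-type $(p_0,q_0)$ estimate with the weak-type $(p_1,q_1)$ estimate gives the desired strong-type bound $\|h_{\Phi,\beta}f\|_{H^q_{|\cdot|^\gamma}(\R)}\lesssim\|f\|_{H^p_{|\cdot|^\alpha}(\R)}$. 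The constraint $p\leq q$ is what guarantees $\theta$ can be chosen so that $q>q_1$ (the weak endpoint sits below), which is needed for the interpolation to upgrade weak to strong.

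The main obstacle I expect is twofold. First, verifying the endpoint Lebesgue-type bound at $(p_0,q_0)$ carefully: one has to confirm that the hypothesis $p_0>1+\alpha$ really does push $m_0=\frac{1+\alpha}{p_0}-1<0$, so that no moment cancellation of $\Phi$ is required and $h_{\Phi,\beta}$ acts boundedly simply by a change of variables plus Minkowski's inequality together with $(\ref{eq2.1})$; the exponent bookkeeping $\frac1s=1+\frac1{q_0}-\frac1{p_0}$ is exactly the Young-type index, and the power weight $|t|^{(1+\gamma)s/q_0-1}$ in $(\ref{eq2.1})$ must be traced back to the $|y|^{\beta-1}$ kernel and the weights $|x|^\gamma,|x|^\alpha$ after the substitution $y\mapsto |y|^{-1}x$. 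Second, citing or establishing the precise interpolation statement for weighted Hardy spaces with power weights, including that the $A_\infty$ condition on $|\cdot|^\gamma$ (equivalently $\gamma>-1$) is preserved and that $H^q_{|\cdot|^\gamma}$ embeds appropriately — this is where one leans on Proposition \ref{pro2.5} and the identification $H^p_w=H^p_{w,{\rm Hilb}}$ from Proposition \ref{pro2.1} to make the interpolation rigorous at the level of the Hilbert-transform characterization rather than the maximal-function one. The step of checking that $(\ref{eq3.14})$ at level $m$ suffices for the lower integer level $m_1$, and that when $p\le q$ one indeed has $q\ge q_1$, should be routine once the indices are written out.
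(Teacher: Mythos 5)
Your proposal matches the paper's proof in all essentials: a strong $(p_0,q_0)$ endpoint obtained from the Young-type Lebesgue bound (Lemma \ref{t3.4}, available because $p_0>\max\{\frac{1+\alpha}{1+\beta+\gamma},1+\alpha\}$ forces $H^{p_0}_{|\cdot|^\alpha}=L^{p_0}_{|\cdot|^\alpha}$ and $H^{q_0}_{|\cdot|^\gamma}=L^{q_0}_{|\cdot|^\gamma}$, with $(\ref{eq2.1})$ being exactly the hypothesis $K_{\Phi,s,q_0,\gamma}<\infty$), a weak $(p_1,q_1)$ endpoint from Theorem \ref{t1.6} with $p_1<p$ chosen so that $\frac{1+\alpha}{p_1}-1=m$, and the Marcinkiewicz interpolation for weighted Hardy spaces (Lemma \ref{t3.7}) applied to $M^+_\varphi\circ h_{\Phi,\beta}$, with $q_1<q<q_0$ following from the common scaling relation. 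The one detail to fix is your hedge that $m_1=[\frac{1+\alpha}{p}-1]$ ``works'': that choice would give $p_1>p$, placing $p$ outside the interpolation segment $(p_1,p_0)$, so one must take the integer $m=[\frac{1+\alpha}{p}-1]+1$ (hence $p_1<p$) exactly as the theorem declares and as you do elsewhere in the proposal.
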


	\begin{theorem}\label{t1.7}
	Let $p~, q~, \alpha~, \gamma~, m$ and $\beta$ be as Theorem \ref{t1.6}. Suppose that $\Phi\in L^1(\R)$,  $g\left(\xi^2\right):=\widehat{\Phi}(\xi)$, where the function $g\in C^{2m+1}:[0, \infty) \rightarrow \mathbb{R}$ satisfying
	\begin{equation}\label{eq3.42}
	\int_0^{\infty}\left|g^{(j)}(\xi)\right| \xi^{j-\frac{1}{2}} d \xi<\infty~\text{and}~ \int_0^{\infty}\left|g^{(i+j)}(\xi)\right| \xi^{i+j-\frac{m}{2}-1} d \xi<\infty
	\end{equation}
	for all $i, j$, where $i=0,1,\cdots,[\frac{m}{2}+1]$ and $j=0,1,\cdots,[\frac{m+1}{2}]$. Then $h_{\Phi,\beta}$ is bounded from $H_{|\cdot| ^\alpha}^p(\mathbb{R})$ to $H_{|\cdot|^ \gamma}^{q, \infty}(\mathbb{R})$ and
	$$
	\left\|h_{\Phi,\beta}(f)\right\|_{H_{|\cdot|^ \gamma}^{q, \infty}(\R)} \lesssim\|f\|_{H_{|\cdot|^ \alpha}^p(\R)} .
	$$
	\end{theorem}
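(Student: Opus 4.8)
The plan is to run the same radial-maximal-function machinery as in Theorem \ref{t1.6}, but to exploit the extra structure $\widehat{\Phi}(\xi)=g(\xi^2)$ in order to lower the required differentiability of $\widehat{\Phi}$: since $g$ is only assumed $C^{2m+1}$ as a function of $\xi^2$, the function $\widehat{\Phi}$ itself is (only) about $C^{m}$-smooth near the origin, so the argument of Theorem \ref{t1.6} cannot be quoted verbatim and the decay hypotheses must be re-bookkept in the variable $\xi^2$. First I would reduce, as usual, to testing on a single power-weighted $H^p_{|\cdot|^\alpha}$-atom $a$ supported on a ball $B=B(x_0,R)$ with the normalization and vanishing-moment conditions up to order $m=\tfrac{1+\alpha}{p}-1$; by the atomic decomposition it suffices to bound $\|h_{\Phi,\beta}a\|_{H^{q,\infty}_{|\cdot|^\gamma}}$ uniformly, and by the homogeneity $h_{\Phi,\beta}(f(\lambda\cdot))(x)=\lambda^{-\beta}h_{\Phi,\beta}(f)(\lambda x)$ together with the scaling of the weighted Hardy norms (this is where the balance relation $\tfrac{1+\alpha}{p}-\tfrac{1+\gamma}{q}=\beta$ enters) I would normalize to $R=1$.

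Next I would compute $M^+_{\varphi}(h_{\Phi,\beta}a)$. Writing $h_{\Phi,\beta}a(x)=\int_{\R}\tfrac{\Phi(y^{-1}x)}{|y|^{1-\beta}}a(y)\,dy$ and moving to the Fourier side, one has $\widehat{h_{\Phi,\beta}a}(\xi)=\int_{\R}|y|^{\beta}\,\widehat{\Phi}(y\xi)\,a(y)\,dy$ (up to harmless constants and a reflection), so that $\varphi_s*h_{\Phi,\beta}a$ is, via Fourier inversion, an integral of $a(y)$ against a kernel built from $\widehat{\varphi}(s\xi)|y|^\beta g((y\xi)^2)$. The moment conditions on $a$ let me subtract the degree-$m$ Taylor polynomial of $y\mapsto |y|^\beta g((y\xi)^2)$ (equivalently, of the kernel) at $y=0$, producing a gain of $|y|^{m+1}\le 1$ on the support of $a$; the cost is a factor controlled by an $(m+1)$-st $y$-derivative, which by the chain rule $\partial_y^{m+1}[g((y\xi)^2)]$ expands — via Faà di Bruno / the Leibniz formula — into a sum of terms $g^{(i)}((y\xi)^2)\,\xi^{2i}\,y^{2i-(m+1)}$ with $i$ ranging roughly over $0\le i\le m+1$. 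This is exactly why the hypothesis is phrased with the index sets $i=0,\dots,[\tfrac m2+1]$, $j=0,\dots,[\tfrac{m+1}{2}]$ and with half-integer powers $\xi^{j-1/2}$, $\xi^{i+j-m/2-1}$: after the substitution $\eta=\xi^2$ (so $d\xi=\tfrac12\eta^{-1/2}d\eta$) the conditions \eqref{eq3.42} become precisely the finiteness of $\int|g^{(j)}(\eta)|\eta^{j-1}\,d\eta$-type integrals needed to dominate $\int|\widehat{\Phi}^{(n)}(\xi)\xi^n|d\xi$ and $\int|\widehat{\Phi}^{(n+l)}(\xi)\xi^{n+l-m-1}|d\xi$ from \eqref{eq3.14}. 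So a clean way to organize the proof is: (a) show $g\in C^{2m+1}$ plus \eqref{eq3.42} $\Rightarrow$ $\widehat{\Phi}\in C^{m}$ (enough moments survive) plus \eqref{eq3.14}-type bounds in the half of the derivatives that actually occur, then (b) invoke the estimates already proved for Theorem \ref{t1.6}.

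For the far region $|x|$ large I would use the pointwise bound on $M^+_\varphi(h_{\Phi,\beta}a)(x)$ coming from the rapid decay of $\widehat\varphi$ and the kernel estimate, integrate $|\cdot|^\gamma$ against the $q$-th power, and check the integral converges — this is where "only weak-$(p,q)$" appears, since at the critical endpoint the strong $L^q$ integral would diverge logarithmically and one settles for the weak bound $\|h_{\Phi,\beta}a\|_{H^{q,\infty}_{|\cdot|^\gamma}}\lesssim 1$. Summing the atomic pieces then gives the claim. The main obstacle, I expect, is step (a)–(b): carefully tracking, through the Faà di Bruno expansion of $\partial_y^{m+1}[|y|^\beta g((y\xi)^2)]$, exactly which derivatives $g^{(i+j)}$ with which powers of $\xi$ show up — the presence of the fractional factor $|y|^\beta$ (absent when $\beta=0$) mixes into the Leibniz expansion and forces the bookkeeping of the $\xi$-powers to be redone, and one must confirm that every term is dominated by one of the two integrals in \eqref{eq3.42} after the $\eta=\xi^2$ change of variables, with no derivative of order exceeding $2m+1$ ever being called for. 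Once that combinatorial reduction is in place, the rest is a routine repetition of the Theorem \ref{t1.6} argument.
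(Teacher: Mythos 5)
Your overall strategy diverges from the paper's in a way that introduces a genuine gap. The paper does not use an atomic decomposition at all: following the proof of Theorem \ref{t1.6}, it establishes the \emph{pointwise} bound $\sup_{0<s<\infty}|\varphi_s*h_{\Phi,\beta}(f)(x)|\lesssim \|f\|_{H^p_{|\cdot|^\alpha}(\R)}\,|x|^{\beta-\frac{1+\alpha}{p}}$ by pairing $f$ against the kernel $y\mapsto \varphi_s*\Phi_{|y|,\beta}(x)$ and invoking the duality $(H^p_w,\mathcal{L}^{p,r}_w)$ of Proposition \ref{rem2.1} together with Lemma \ref{t3.2}; the weak-type conclusion is then immediate because $|x|^{-(1+\gamma)/q}$ lies in $L^{q,\infty}_{|\cdot|^\gamma}(\R)$. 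Your route instead bounds $\|h_{\Phi,\beta}a\|_{H^{q,\infty}_{|\cdot|^\gamma}}$ uniformly over atoms and then "sums the atomic pieces." That last step fails: the weak quasi-norm is not $\sigma$-subadditive in the required sense (at best $L^{q,\infty}$ is $r$-normable only for $r<q$), and, independently, uniform boundedness of an operator on atoms does not in general imply boundedness on the whole Hardy space. So as written the proposal does not yield the stated $H^p_{|\cdot|^\alpha}\to H^{q,\infty}_{|\cdot|^\gamma}$ bound. (A secondary concern: you propose subtracting the Taylor polynomial of $y\mapsto|y|^\beta g((y\xi)^2)$ at $y=0$, but the moment conditions of an atom are centered at the ball's center, and $|y|^\beta$ is not smooth at the origin.)

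The second, more substantive omission is that the heart of the paper's proof --- verifying
\[
\left||x|^{-\beta}\frac{d^m}{dy^m}\Bigl(y^\beta\int_{\R} g(y^2\xi^2)\Psi(s\xi)e^{2\pi i x\xi}\,d\xi\Bigr)\right|\lesssim |x|^{-m-1}
\]
uniformly in $s$ and $y$ --- is exactly the "combinatorial reduction" you defer. The paper carries it out using the identity \eqref{eq3.33} for $\frac{d^n}{dy^n}g(y^2\xi^2)$ (with $|Q_{2j-n}(y)|\lesssim|y|^{2j-n}$), splitting into $|x|\le y$ (where the first condition in \eqref{eq3.42} enters after substituting $\xi^2y^2=\tilde\xi$, which produces the exponent $j-\tfrac12$) and $|x|>y$ (where one integrates by parts $m+1$ times in $\xi$, uses $\supp\Psi\subset B(0,1)$, and the second condition in \eqref{eq3.42} with exponent $i+j-\tfrac m2-1$ appears). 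The extra factor $y^\beta$ from the fractional order is absorbed via the Leibniz rule and the inequalities $(y/|x|)^{\beta-1}\le 1$ for $|x|\le y$ and $(y/|x|)^{\beta}\le 1$ for $|x|>y$. Your intuition about why the half-integer exponents in \eqref{eq3.42} arise is correct, but without this computation and without replacing the atomic summation by the duality/pointwise-bound mechanism, the proof is not complete.
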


\begin{remark}\label{rem2.8}
The case of $0<\beta<1$ for $h_{\Phi,\beta}$ is considered in Theorems \ref{t1.6}, \ref{t1.4} and \ref{t1.7}, the case of $\beta=0$ for $h_{\Phi,\beta}$ is considered in \cite[Theorems 2, 3 and 4]{2017Ruan}.
\end{remark}

\subsubsection{Method of Hilbert transform \label{s3.2.2}}
 Let $p,q,\alpha ,\gamma$ and $\beta$ be as in Theorem \ref{t1.6}. Denote by $H$ the Hilbert transform. We first obtain the boundedness of $H\circ h_{\Phi,\beta}$ from $H^p_{|\cdot|^\alpha}$ to $L^{q,\infty}_{|\cdot|^\gamma}$, then by using an interpolation and the characterization of Hilbert transform of $H^q_{|\cdot|^\gamma}(\R)$, we further obtain the boundedness of $h_{\Phi,\beta}$ from $H^p_{|\cdot|^\alpha}(\R)$ to $H^q_{|\cdot|^\gamma}(\R)$. 


\begin{theorem}\label{t1.1}
    Let \(p,~q,~\alpha,~\gamma,~\beta,\) and \(m\) be as in Theorem \ref{t1.6}. Suppose that $\Phi\in L^1(\R)$ with $\widehat{\Phi} \in C^{2m + 1}(\mathbb{R})$ and the support of \(\widehat{\Phi}\) is compact, for any \(f\in H^p_{|\cdot|^\alpha}(\mathbb{R})\), we have
    \[
    \|Hh_{\Phi,\beta}(f)\|_{L^{q,\infty}_{|\cdot|^\gamma}(\R)}\lesssim\|f\|_{H^p_{|\cdot|^\alpha}(\R)} .
    \]
\end{theorem}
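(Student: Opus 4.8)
The plan is to first identify $H(h_{\Phi,\beta}f)$ with a fractional Hausdorff operator of a modified symbol, and then to estimate the latter on atoms. The key identity is
$$H\bigl(h_{\Phi,\beta}f\bigr)=h_{\Psi,\beta}f,\qquad\Psi:=H\Phi,$$
valid on a dense subclass of $H^{p}_{|\cdot|^{\alpha}}(\R)$ (say, finite linear combinations of atoms): writing $\frac{\Phi(|y|^{-1}x)}{|y|^{1-\beta}}=|y|^{\beta}\Phi_{|y|}(x)$ with $\Phi_{s}(x):=s^{-1}\Phi(s^{-1}x)$, so that $h_{\Phi,\beta}f(x)=\int_{\R}|y|^{\beta}\Phi_{|y|}(x)f(y)\,dy$, and using that $H$ commutes with positive dilations, i.e. $H(\Phi_{s})=(H\Phi)_{s}$ for $s>0$ (immediate from $\widehat{H(g)}(\xi)=-i\operatorname{sgn}(\xi)\widehat g(\xi)$), one interchanges $H$ with the $y$-integral (most cleanly by comparing the two Fourier transforms). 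Hence it suffices to prove $\|h_{\Psi,\beta}f\|_{L^{q,\infty}_{|\cdot|^{\gamma}}(\R)}\lesssim\|f\|_{H^{p}_{|\cdot|^{\alpha}}(\R)}$. Here $\widehat\Psi(\xi)=-i\operatorname{sgn}(\xi)\widehat\Phi(\xi)$ has the same compact support as $\widehat\Phi$ and, off the origin, coincides after $k$ differentiations with $-i\operatorname{sgn}(\xi)\widehat\Phi^{(k)}(\xi)$; so, although $\Psi\notin L^{1}(\R)$ in general, Fourier inversion together with integration by parts — the boundary term at $\xi=0$ being killed by the factor $\xi^{j}$ when $j\ge1$, and further integrations by parts being licit since $\widehat\Phi\in C^{2m+1}$ is compactly supported — shows that $\Psi$ is smooth with $|\Psi^{(j)}(t)|\lesssim\min\{1,|t|^{-1}\}$ (and the higher-order decay of $\Psi^{(j)}$ needed below). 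In particular $\Psi$ decays faster than the $|t|^{\beta-1}$ rate needed here, since $0<\beta<1$.

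Next, fix an atomic decomposition $f=\sum_{j}\lambda_{j}a_{j}$ in $H^{p}_{|\cdot|^{\alpha}}(\R)$, each $a_{j}$ a $(p,\infty,m)$-atom on a ball $B_{j}=B(x_{j},r_{j})$ (so $\|a_{j}\|_{\infty}\le w(B_{j})^{-1/p}$ with $w=|\cdot|^{\alpha}$, and $\int_{\R}a_{j}(y)y^{k}\,dy=0$ for $0\le k\le m$) and $\sum_{j}|\lambda_{j}|^{p}\lesssim\|f\|_{H^{p}_{|\cdot|^{\alpha}}}^{p}$. By the standard device for passing from a uniform atomic estimate to a weak-type bound on $H^{p}_{|\cdot|^{\alpha}}(\R)$ (as in the proofs of \cite[Theorem 2]{2017Ruan} and \cite{2016Chen}), it suffices to show $\|h_{\Psi,\beta}(a)\|_{L^{q,\infty}_{|\cdot|^{\gamma}}(\R)}\lesssim1$ for each such atom $a$, say on $B(x_{0},r)$. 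The dilation covariance of $h_{\Psi,\beta}$ and the homogeneity of $|\cdot|^{\alpha}$ and $|\cdot|^{\gamma}$ reduce this to the case $r=1$ — rescaling by $r$ maps atoms to atoms and multiplies both norms by $r^{(1+\alpha)/p-\beta-(1+\gamma)/q}=r^{0}$, by the relation $\frac{1+\alpha}{p}-\frac{1+\gamma}{q}=\beta$ — and the evenness of the weights reduces it further to $x_{0}\ge0$.

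It remains to control $h_{\Psi,\beta}(a)(x)=\int_{B(x_{0},1)}\Psi(x/|y|)\,|y|^{\beta-1}a(y)\,dy$ pointwise. On the $x$-region where $\Psi(x/|y|)$ is of full size one estimates crudely by $\|a\|_{\infty}$ times $|B(x_{0},1)|$ times the local sup of $\Psi$; on the complementary region one uses the $m+1$ vanishing moments of $a$ to replace the kernel $K(x,y):=\Psi(x/|y|)|y|^{\beta-1}$ by its order-$m$ Taylor remainder in $y$ at $y=x_{0}$, so that $h_{\Psi,\beta}(a)(x)=\int\!\bigl(K(x,y)-\sum_{k=0}^{m}\tfrac{\partial_{y}^{k}K(x,x_{0})}{k!}(y-x_{0})^{k}\bigr)a(y)\,dy$, and then estimates $\partial_{y}^{m+1}K(x,\cdot)$ by the Leibniz/Fa\`a di Bruno expansion: each term is a product of some $\Psi^{(\ell)}(x/|y|)$ (differentiating the first factor brings in powers of $x/|y|^{2}$) with a derivative $|y|^{\beta-1-i}$ of the second. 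The factor $|y|^{\beta-1}$, absent when $\beta=0$, is precisely what makes this delicate: it yields extra powers of $|y|$, hence of $1/|x|$ once $|y|\approx|x_{0}|$ and the decay of $\Psi^{(\ell)}$ are used, and these must be tracked so that the $y$-integral and the ensuing distribution-function integral both converge and so that, after inserting $m=\frac{1+\alpha}{p}-1$ and $\frac{1+\alpha}{p}-\frac{1+\gamma}{q}=\beta$, the weak-$L^{q}_{|\cdot|^{\gamma}}$ quantity comes out $O(1)$ uniformly in $x_{0}$. Maintaining this uniformity is the main obstacle: one must treat separately the case $|x_{0}|\lesssim1$ — where $0\in B(x_{0},1)$, the kernel fails to be smooth there, and the singularity of $|y|^{\beta-1}$ at the origin is absorbed directly from the local integrability of $\Psi$ (valid since $\Psi$ is bounded near $0$) rather than by the Taylor argument — and the case $|x_{0}|\gg1$, where the $x$-axis is split into dyadic annuli around $0$ and around $x_{0}$. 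Once the pointwise bounds are in hand, substituting them into the definition of $\|\cdot\|_{L^{q,\infty}_{|\cdot|^{\gamma}}}$ and summing over the atoms is routine.
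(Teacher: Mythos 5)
Your route is genuinely different from the paper's. After the identity $Hh_{\Phi,\beta}=h_{H\Phi,\beta}$ (which the paper also uses, via Lemma \ref{t3.3}), the paper does not decompose $f$ into atoms at all: it pairs $f$ against the kernel $y\mapsto|y|^{\beta-1}(H\Phi)(|y|^{-1}x)$ through the $(H^p_w,\mathcal{L}^{p,r}_w)$ duality of Proposition \ref{rem2.1}, reduces the Campanato norm of that kernel to a supremum of its $m$-th $y$-derivative via Lemma \ref{t3.2}, and so obtains the single pointwise bound $|Hh_{\Phi,\beta}f(x)|\lesssim\|f\|_{H^p_{|\cdot|^\alpha}}|x|^{\beta-(1+\alpha)/p}$, from which the weak-$L^q_{|\cdot|^\gamma}$ estimate is a one-line distribution-function computation. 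This difference is not cosmetic. Your passage from a uniform atom bound to the weak-type bound on all of $H^p_{|\cdot|^\alpha}$ requires $q\ge p$: for $q<1$ the best available is $\|\sum_j g_j\|^q_{L^{q,\infty}}\lesssim\sum_j\|g_j\|^q_{L^{q,\infty}}$, so you would need $\sum_j|\lambda_j|^q\lesssim(\sum_j|\lambda_j|^p)^{q/p}$, i.e.\ $q\ge p$ (and $q=1$ is also problematic, weak $L^1$ not being countably subadditive). But Theorems \ref{t1.6} and \ref{t1.1} impose no relation between $p$ and $q$; for instance $\alpha=1$, $p=1$, $\gamma=0$ forces $q=1/(2-\beta)<p$. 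The duality route has no such restriction.

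More seriously, your pointwise atom estimate does not close for $m\ge1$ with the decay you establish for $\Psi=H\Phi$. You correctly record $|\Psi(t)|\lesssim\min\{1,|t|^{-1}\}$, and indeed $\Psi(t)=\widehat{\Phi}(0)/(\pi t)+O(t^{-2})$, so nothing better is available unless $\widehat{\Phi}(0)=0$. In the Taylor-remainder term where all $m+1$ $y$-derivatives fall on $|y|^{\beta-1}$, the factor $\Psi(x/|y|)$ is left undifferentiated and contributes only $O(|y|/|x|)$; equivalently, the leading tail of $h_{\Psi,\beta}(a)(x)$ as $|x|\to\infty$ is $\frac{\widehat{\Phi}(0)}{\pi x}\int|y|^{\beta}a(y)\,dy$, and $|y|^{\beta}$ is \emph{not} a polynomial for $0<\beta<1$, hence is not annihilated by the moment conditions $\int y^k a(y)\,dy=0$, $0\le k\le m$. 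So your bounds give only $|x|^{-1}$ decay, whereas finiteness of the $L^{q,\infty}_{|\cdot|^\gamma}$ quasi-norm at spatial infinity requires decay $|x|^{-(1+\gamma)/q}=|x|^{-(m+1-\beta)}$, which is strictly faster than $|x|^{-1}$ once $m\ge1$. (When $\beta=0$ this term is $\frac{c}{x}\int a=0$, which is exactly why the argument of \cite{2016Chen} closes; the presence of $|y|^{\beta-1}$ destroys that cancellation.) You would need to restrict to $m=0$, assume $\widehat{\Phi}(0)=0$, or produce a cancellation mechanism beyond polynomial moments; the sentence ``these must be tracked so that the weak-$L^q$ quantity comes out $O(1)$'' is precisely where the argument fails. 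I add that the same $1/x$ tail is a concern for the paper's own verification of \eqref{eq3.16} (the $(m+1)$-fold integration by parts there discards the boundary terms at $\xi=0$ generated by the $\operatorname{sgn}\xi$ factor), but in your proposal the gap is explicit.
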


\begin{theorem}\label{t1.8}
    Let \(p,~q,~p_0,~q_0,~\alpha,~\gamma,~\beta,~m\), and \(s\) be as Theorem \ref{t1.4}. Suppose that $\Phi\in L^1(\R)$ satisfying \((\ref{eq2.1})\), \(\widehat{\Phi}\in C^{2m+1}(\R)\) and the support of \(\widehat{\Phi}\) is compact. Then for all \(f\in H^p_{|\cdot|^\alpha}(\R)\),
    \[
    \|h_{\Phi,\beta}f\|_{H^q_{|\cdot|^\gamma}(\R)}\lesssim \|f\|_{H^p_{|\cdot|^\alpha}(\R)}.
    \]
\end{theorem}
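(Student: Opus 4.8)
The plan is to reduce Theorem \ref{t1.8} to Theorem \ref{t1.1} together with an interpolation argument and the Hilbert-transform characterization of power-weighted Hardy spaces (Proposition \ref{pro2.1}), mirroring the strategy already announced in Subsection \ref{s3.2.2}. First I would observe that the hypotheses here are the same as those of Theorem \ref{t1.1} but with the additional integrability condition \eqref{eq2.1} at the auxiliary exponents $(p_0,q_0,s)$ coming from Theorem \ref{t1.4}; this extra condition is exactly what is needed to run a second endpoint estimate at $(p_0,q_0)$. Concretely, the scheme is: (i) by Theorem \ref{t1.1}, $H\circ h_{\Phi,\beta}$ maps $H^p_{|\cdot|^\alpha}(\R)$ boundedly into $L^{q,\infty}_{|\cdot|^\gamma}(\R)$; (ii) establish a second bound $H\circ h_{\Phi,\beta}\colon H^{p_0}_{|\cdot|^\alpha}(\R)\to L^{q_0,\infty}_{|\cdot|^\gamma}(\R)$ (or rather a strong-type bound into $L^{q_0}_{|\cdot|^\gamma}$), using the extra hypothesis \eqref{eq2.1}; (iii) interpolate between these two weak-type endpoints to get the strong-type bound $H\circ h_{\Phi,\beta}\colon H^p_{|\cdot|^\alpha}(\R)\to L^{q}_{|\cdot|^\gamma}(\R)$, using $p\le q$ and the scaling relation $\frac{1+\alpha}{p}-\frac{1+\gamma}{q}=\beta$ so that both endpoints lie on the correct line; (iv) combine with the trivial bound $h_{\Phi,\beta}\colon H^p_{|\cdot|^\alpha}(\R)\to L^q_{|\cdot|^\gamma}(\R)$ — which also follows from the radial-maximal-function machinery of Theorem \ref{t1.6}, or can be extracted along the way — to conclude that both $h_{\Phi,\beta}(f)$ and $H(h_{\Phi,\beta}(f))$ lie in $L^q_{|\cdot|^\gamma}(\R)$ with the right norm control; (v) invoke the definition of $H^q_{|\cdot|^\gamma,\mathrm{Hilb}}(\R)$ (Definition \ref{def2.4}) and Proposition \ref{pro2.1} to identify $\|h_{\Phi,\beta}(f)\|_{H^q_{|\cdot|^\gamma}(\R)}\approx\|h_{\Phi,\beta}(f)\|_{L^q_{|\cdot|^\gamma}(\R)}+\|H(h_{\Phi,\beta}(f))\|_{L^q_{|\cdot|^\gamma}(\R)}$, which finishes the proof. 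Note that $|\cdot|^\gamma\in A_\infty$ for $-1<\gamma<\infty$ by \eqref{eq2} (with the $A_1$ case when $-1<\gamma\le 0$), so Proposition \ref{pro2.1} applies.

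For step (ii), I would follow the template of Theorem \ref{t1.4}'s proof: one expands $f\in H^{p_0}_{|\cdot|^\alpha}$ into weighted atoms, applies $H\circ h_{\Phi,\beta}$ to a single atom, and uses the compact support of $\widehat{\Phi}$ plus the size/smoothness conditions \eqref{eq3.14} to control $h_{\Phi,\beta}$ of the atom; the role of \eqref{eq2.1} at exponent $s$ with $\tfrac1s=1+\tfrac1{q_0}-\tfrac1{p_0}$ is to make the relevant convolution/averaging kernel estimate summable, exactly as in \cite{2017Ruan} for $\beta=0$ and in Theorem \ref{t1.4} for the fractional case. Since the support of $\widehat{\Phi}$ is compact, the conditions in \eqref{eq3.14} are automatically finite, so the only substantive input at the auxiliary exponents is \eqref{eq2.1}. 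The Hilbert transform is bounded on $L^{q_0}_{|\cdot|^\gamma}(\R)$ because $|\cdot|^\gamma\in A_\infty$ and, when $q_0>1$, $|\cdot|^\gamma\in A_{q_0}$ for $\gamma$ in the admissible range — and for the weak endpoint one uses the Calderón–Zygmund weak $(1,1)$ theory with weights — so one can even pass the Hilbert transform harmlessly through this estimate. Assembling atoms then yields the claimed $(p_0,q_0)$ bound.

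The main obstacle I anticipate is step (iii), the interpolation between the two weak-type estimates $H^p_{|\cdot|^\alpha}\to L^{q,\infty}_{|\cdot|^\gamma}$ and $H^{p_0}_{|\cdot|^\alpha}\to L^{q_0,\infty}_{|\cdot|^\gamma}$ upgraded to the strong-type $H^p_{|\cdot|^\alpha}\to L^q_{|\cdot|^\gamma}$: one needs a Marcinkiewicz-type interpolation theorem valid for sublinear operators defined on a scale of power-weighted Hardy spaces with values in weighted Lorentz/Lebesgue spaces, and one must verify that the atomic decomposition is compatible (i.e. that $H^{p}_{|\cdot|^\alpha}$ sits, in the appropriate real-interpolation sense, between $H^{p_0}_{|\cdot|^\alpha}$ and $H^1_{|\cdot|^\alpha}$ or a suitable Lebesgue endpoint). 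This is precisely the delicate point handled in \cite{2017Ruan} for the unweighted-$\beta$ situation; the condition $p_0>\max\{\tfrac{1+\alpha}{1+\beta+\gamma},1+\alpha\}$ in Theorem \ref{t1.4} is what guarantees $p<q\le q_0$ (or the analogous ordering) so that the interpolation endpoints bracket the target and the resulting exponents satisfy the gap relation. I would carry out this interpolation by the same real-method computation as in \cite{2017Ruan}, emphasizing only the modifications forced by $\beta>0$ — namely that the homogeneity of $h_{\Phi,\beta}$ shifts the scaling relation from $\tfrac{1+\alpha}{p}=\tfrac{1+\gamma}{q}$ to $\tfrac{1+\alpha}{p}-\tfrac{1+\gamma}{q}=\beta$, which is consistent across all three exponent triples and hence is preserved under interpolation. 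Once step (iii) is in hand, steps (iv) and (v) are essentially bookkeeping with Proposition \ref{pro2.1}.
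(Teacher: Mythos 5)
Your overall strategy --- two endpoint estimates for $H\circ h_{\Phi,\beta}$, Marcinkiewicz interpolation, then the Hilbert-transform characterization of $H^q_{|\cdot|^\gamma}(\R)$ via Proposition \ref{pro2.1} --- is exactly the paper's route, and your step (iv) correctly identifies the bookkeeping needed to control both $\|h_{\Phi,\beta}f\|_{L^q_{|\cdot|^\gamma}}$ and $\|Hh_{\Phi,\beta}f\|_{L^q_{|\cdot|^\gamma}}$. However, there is a concrete gap in how you set up the first endpoint. In step (i) you apply Theorem \ref{t1.1} at the pair $(p,q)$ itself; this is not permitted, because Theorem \ref{t1.1} inherits from Theorem \ref{t1.6} the hypothesis that $\frac{1+\alpha}{p}-1$ is an integer, whereas in Theorem \ref{t1.8} (via Theorem \ref{t1.4}) it is not, and $m=[\frac{1+\alpha}{p}-1]+1$. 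Worse, even if that weak-type bound at $(p,q)$ were available, interpolating it against a strong bound at $(p_0,q_0)$ with $p_0>p$ only yields strong-type bounds at exponents \emph{strictly between} the endpoints, never at the endpoint $(p,q)$ itself; your step (iii) as written therefore cannot close. The fix, which is what the paper does, is to choose $p_1\in(0,p)$ with $m=\frac{1+\alpha}{p_1}-1$ (possible since $m>\frac{1+\alpha}{p}-1$), apply Theorem \ref{t1.1} at $(p_1,q_1)$ with $\frac{1+\alpha}{p_1}-\frac{1+\gamma}{q_1}=\beta$, and then interpolate via Lemma \ref{t3.7} with $T=H\circ h_{\Phi,\beta}$ between $(p_1,q_1)$ and $(p_0,q_0)$, so that $p$ is interior and $q_1<q<q_0$ as in the proof of Theorem \ref{t1.4}. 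You gesture at ``bracketing'' in your discussion, but your enumerated steps do not implement it.

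Two smaller points. Your step (ii) is more complicated than necessary: since $p_0>1+\alpha>1$ and $q_0>\frac{1+\gamma}{1-\beta}>1$, one has $H^{p_0}_{|\cdot|^\alpha}(\R)=L^{p_0}_{|\cdot|^\alpha}(\R)$, so no atomic decomposition is needed; the strong bound $h_{\Phi,\beta}\colon L^{p_0}_{|\cdot|^\alpha}\to L^{q_0}_{|\cdot|^\gamma}$ is exactly Lemma \ref{t3.4} under \eqref{eq2.1}, and the Hilbert transform is then absorbed by Lemma \ref{t100} since $|x|^\gamma\in A_{q_0}$ by \eqref{eq2} (using $q_0>1+\gamma$). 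And the interpolation machinery you worry about in step (iii) is already supplied off the shelf by Lemma \ref{t3.7} (Krotov), so no new real-interpolation computation is required.
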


\begin{remark}\label{rem2.7}
The case of $0<\beta<1$, $\alpha\geq0$ and $\gamma>-1$ for the boundedness of $h_{\Phi,\beta}$ is considered in Theorems \ref{t1.1} and \ref{t1.8}, the case of $\alpha=\gamma=\beta=0$ for $h_{\Phi,\beta}$ is considered in \cite[Theorems 2.2 and 2.3]{2016Chen}.

\end{remark}

\section{Proofs of theorems\label{s4}}

We need some lemmas for preparation. First, let us recall Young's inequality on locally compact groups with Haar measure. Then by Lemma \ref{t3.8}, we further obtain the boundedness of $h_{\Phi,\beta}$ on power-weighted Lebesgue spaces.

\begin{lemma}\cite[Theorem 1.2.12]{Loukas2008}\label{t3.8}
Let $1 \leqslant p, q, s \leqslant \infty$ satisfy $\frac{1}{q}=\frac{1}{p}+\frac{1}{s}-1$, and $\mu$ be a Haar measure on a locally compact group $G$, then
$$
\|f * g\|_{L^q(G, \mu)} \leqslant\|g\|_{L^s(G, \mu)}\|f\|_{L^p(G, \mu)}
$$
for all $f$ in $L^p(G, \mu)$ and for all $g$ in $L^s(G, \mu)$ satisfying $\|g\|_{L^s(G, \mu)}=\|\widetilde{g}\|_{L^s(G, \mu)}$, where $\widetilde{g}(x)=g\left(x^{-1}\right)$ and
$$
(f * g)(x)=\int_G f(y) g\left(y^{-1} x\right) d \mu.
$$
\end{lemma}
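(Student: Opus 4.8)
The statement is the generalized Young convolution inequality on a locally compact group, and I would prove it by a single three-exponent H\"older estimate combined with Tonelli's theorem, the only group-theoretic input being the left-invariance of $\mu$. Since $|(f*g)(x)|\le(|f|*|g|)(x)$, I may assume $f,g\ge0$. First I would record the exponent bookkeeping: the relation $\frac1q=\frac1p+\frac1s-1$ forces $q\ge p$ and $q\ge s$, so I may define $r_1,r_2\in[1,\infty]$ by $\frac1{r_1}:=\frac1p-\frac1q\ge0$ and $\frac1{r_2}:=\frac1s-\frac1q\ge0$. Then $\frac1q+\frac1{r_1}+\frac1{r_2}=\frac1p+\frac1s-\frac1q=1$, so $(q,r_1,r_2)$ is an admissible H\"older triple, and moreover $\frac pq+\frac p{r_1}=1$ and $\frac sq+\frac s{r_2}=1$.

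For fixed $x$ I would factor the integrand of $(f*g)(x)=\int_G f(y)\,g(y^{-1}x)\,d\mu(y)$ as
\[
f(y)\,g(y^{-1}x)=\big(f(y)^p g(y^{-1}x)^s\big)^{1/q}\cdot f(y)^{p/r_1}\cdot g(y^{-1}x)^{s/r_2},
\]
which is legitimate precisely because of the two exponent identities just noted. Applying H\"older's inequality with exponents $q,r_1,r_2$ then bounds $(f*g)(x)$ by the product of $\big(\int_G f(y)^p g(y^{-1}x)^s\,d\mu(y)\big)^{1/q}$, the constant $\|f\|_{L^p(G,\mu)}^{p/r_1}$, and $\big(\int_G g(y^{-1}x)^s\,d\mu(y)\big)^{1/r_2}$. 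The crucial factor is the last one: replacing $y$ by $xy$, under which the left Haar measure is invariant and $(xy)^{-1}x=y^{-1}$, gives $\int_G g(y^{-1}x)^s\,d\mu(y)=\int_G g(y^{-1})^s\,d\mu(y)=\|\widetilde g\|_{L^s(G,\mu)}^s$, which by the hypothesis $\|g\|_{L^s(G,\mu)}=\|\widetilde g\|_{L^s(G,\mu)}$ equals $\|g\|_{L^s(G,\mu)}^s$ and is in particular independent of $x$.

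It then remains to raise the pointwise bound to the power $q$, integrate in $x$, and apply Tonelli to the single surviving double integral:
\[
\int_G\!\int_G f(y)^p g(y^{-1}x)^s\,d\mu(y)\,d\mu(x)=\int_G f(y)^p\Big(\int_G g(y^{-1}x)^s\,d\mu(x)\Big)\,d\mu(y)=\|f\|_{L^p(G,\mu)}^p\,\|g\|_{L^s(G,\mu)}^s,
\]
where the inner integral is evaluated by replacing $x$ with $yx$, again using left-invariance. Collecting the three contributions yields $\|f*g\|_{L^q(G,\mu)}^q\le\|f\|_{L^p(G,\mu)}^{\,p+pq/r_1}\,\|g\|_{L^s(G,\mu)}^{\,s+sq/r_2}$, and taking $q$-th roots together with $\frac1q+\frac1{r_1}=\frac1p$ and $\frac1q+\frac1{r_2}=\frac1s$ collapses the exponents to exactly $\|f*g\|_{L^q(G,\mu)}\le\|f\|_{L^p(G,\mu)}\|g\|_{L^s(G,\mu)}$.

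The genuinely delicate point is the interaction with the modular function of $G$: on a non-unimodular group the inversion $y\mapsto y^{-1}$ distorts $\mu$ by the modular factor $\Delta$, so $\int_G g(y^{-1})^s\,d\mu$ need not coincide with $\|g\|_{L^s(G,\mu)}^s$; the hypothesis $\|g\|_{L^s(G,\mu)}=\|\widetilde g\|_{L^s(G,\mu)}$ is exactly the device that neutralizes this, and it holds automatically when $G$ is unimodular. A secondary matter is the endpoint exponents: when $p=1$, $s=1$, or $q=\infty$ one of $r_1,r_2$ degenerates to $\infty$ and the associated factor must be read as an essential supremum, so I would treat $1<p,s<q<\infty$ by the three-exponent argument above and dispatch the degenerate triples separately, where they reduce either to the ordinary two-function H\"older inequality (the case $q=\infty$) or to trivial identities.
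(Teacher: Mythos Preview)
Your proof is correct and is in fact the standard three-exponent H\"older argument that Grafakos gives in the cited reference. Note, however, that the paper does not prove this lemma at all: it is stated with the citation \cite[Theorem 1.2.12]{Loukas2008} and used as a black box in the proof of Lemma~\ref{t3.4}, so there is no ``paper's own proof'' to compare against. Your write-up is self-contained and handles the one subtle point---that left-invariance alone does not control $\int_G g(y^{-1}x)^s\,d\mu(y)$ on a non-unimodular group, which is precisely why the hypothesis $\|g\|_{L^s}=\|\widetilde g\|_{L^s}$ is imposed---exactly as it should.
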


\begin{lemma}\label{t3.4}
Let $1 \leq p, q \leq \infty, 0 \leq \beta<1$ and $\alpha, \gamma \in \mathbb{R}$ satisfy $\frac{\gamma+1}{q}=\frac{\alpha+1}{p}-\beta$. If
\[
K_{\Phi, s, q, \gamma}:=\left( \int_{\R}|\Phi(t)|^s|t|^{(1+\gamma)s/q-1} dt \right)^\frac{1}{s}<\infty,
\]
where s satisfies $\frac{1}{q}=\frac{1}{p}+\frac{1}{s}-1$, then the operator
$h_{\Phi, \beta}$ is bounded from $L^p_{|\cdot|^\alpha}\left(\mathbb{R}\right)$ to $L^q_{|\cdot|^\gamma}\left(\mathbb{R}\right)$  , i.e., for all $f \in L^p_{|\cdot|^\alpha}(\R)$
\[
\left\|h_{\Phi, \beta} f\right\|_{L^q_{|\cdot|^\gamma}(\R)} \leq  2^\frac{1}{q'} K_{\Phi, s, q, \gamma}\|f\|_{L^p_{|\cdot|^\alpha}(\R)}.
\] 
In addition, when \(\alpha = \gamma=0\), that is, if
\begin{equation}\label{eq3.8}
K_{\Phi,\beta,q}=\left( \int_{\mathbb{R}} |\Phi(t)|^{\frac{1}{1-\beta}} |t|^{\frac{1}{q(1-\beta)}-1} dt \right)^{1-\beta}<\infty,
\end{equation}
we can obtain that \(h_{\Phi,\beta}\) is bounded from \(L^p(\mathbb{R})\) to \(L^q(\mathbb{R})\), i.e., for all $f\in L^p(\R)$,
$$
\|h_{\Phi,\beta}(f)\|_{L^q(\R)}\leq 2^\frac{1}{q'} K_{\Phi,\beta,q} \|f\|_{L^p(\R)}.
$$  
\end{lemma}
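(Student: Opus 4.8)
The plan is to reduce Lemma~\ref{t3.4} to an application of Young's inequality on the multiplicative group $G=\mathbb{R}^\ast=\mathbb{R}\setminus\{0\}$ equipped with the Haar measure $d\mu(t)=dt/|t|$, after a suitable change of unknown that absorbs the power weights and the fractional exponent $\beta$. First I would substitute $g(x):=|x|^{(1+\gamma)/q}h_{\Phi,\beta}(f)(x)$ and $F(x):=|x|^{(1+\alpha)/p}f(x)$, so that $\|h_{\Phi,\beta}f\|_{L^q_{|\cdot|^\alpha}}=\|g\|_{L^q(G,\mu)}$ and $\|f\|_{L^p_{|\cdot|^\alpha}}=\|F\|_{L^p(G,\mu)}$; the point is that the two $L^p_{|\cdot|^\alpha}$-type norms with Lebesgue measure become plain $L^p(G,\mu)$-norms under this rescaling, because $|x|^{\,p\cdot(1+\alpha)/p}\,dx/|x|=|x|^\alpha dx$ and similarly for the target.

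Next I would rewrite the operator in convolution form on $G$. Starting from $h_{\Phi,\beta}(f)(x)=\int_{\R}\Phi(x/y)|y|^{\beta-1}f(y)\,dy$, the change of variable $y\mapsto y$ with $dy=|y|\,d\mu(y)$ and the identity $\frac{1+\alpha}{p}-\frac{1+\gamma}{q}=\beta$ should turn $g$ into $\int_G \Psi(x/y)F(y)\,d\mu(y)$ where $\Psi(t)=\Phi(t)|t|^{(1+\gamma)/q}$; one has to check carefully that the exponents on $|x|$, $|y|$ and $|x/y|$ collapse exactly, using $\beta=\frac{1+\alpha}{p}-\frac{1+\gamma}{q}$, so that no residual power of $|y|$ or $|x|$ is left behind. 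Then $g=F\ast\Psi$ in the sense of Lemma~\ref{t3.8} with the group operation $y^{-1}x=x/y$, and $\|\Psi\|_{L^s(G,\mu)}=\big(\int_G|\Phi(t)|^s|t|^{(1+\gamma)s/q}\,d\mu(t)\big)^{1/s}=\big(\int_\R|\Phi(t)|^s|t|^{(1+\gamma)s/q-1}\,dt\big)^{1/s}=K_{\Phi,s,q,\gamma}$, which is finite by hypothesis. With $\frac1q=\frac1p+\frac1s-1$, Lemma~\ref{t3.8} yields $\|g\|_{L^q(G,\mu)}\le\|\Psi\|_{L^s(G,\mu)}\|F\|_{L^p(G,\mu)}$, i.e.\ the desired bound — except for the constant.

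The remaining point, and the one place where a small genuine estimate is needed, is the symmetry hypothesis $\|\Psi\|_{L^s(G,\mu)}=\|\widetilde\Psi\|_{L^s(G,\mu)}$ in Lemma~\ref{t3.8}, where $\widetilde\Psi(t)=\Psi(1/t)$. This does \emph{not} hold for our $\Psi$ in general, so I would instead split $G$ into $\{|t|\le 1\}$ and $\{|t|\ge1\}$ (equivalently, split $f$ or the kernel accordingly), apply the symmetric version on each piece after symmetrizing, and collect the two contributions; tracking the worst constant through this splitting is exactly what produces the factor $2^{1/q'}$ in the statement. (Alternatively one can prove a one-sided Young-type inequality on $G$ directly by Minkowski's integral inequality and Hölder, which also naturally gives the constant $2^{1/q'}$ after the $|t|\le1$ / $|t|\ge1$ decomposition.) The special case $\alpha=\gamma=0$ then follows immediately: the constraint $\frac1p-\frac1q=\beta$ forces $s=\frac{1}{1-\beta}$ and $(1+\gamma)s/q-1=\frac{1}{q(1-\beta)}-1$, so $K_{\Phi,s,q,\gamma}^{1/s}$ becomes exactly $K_{\Phi,\beta,q}$ as written in \eqref{eq3.8}, and the stated inequality $\|h_{\Phi,\beta}f\|_{L^q}\le 2^{1/q'}K_{\Phi,\beta,q}\|f\|_{L^p}$ drops out. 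I expect the main obstacle to be purely bookkeeping: verifying that the weight exponents cancel precisely under the multiplicative-group substitution, and handling the symmetry requirement of Lemma~\ref{t3.8} cleanly enough to land the constant $2^{1/q'}$ rather than something larger.
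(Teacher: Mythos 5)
Your overall strategy (reduce to Young's inequality on a multiplicative group with Haar measure $dt/|t|$, after absorbing the weights via $F(y)=|y|^{(1+\alpha)/p}f(y)$ and using $\frac{1+\alpha}{p}-\frac{1+\gamma}{q}=\beta$ to cancel the exponents) is the right one and matches the paper's, but the central reduction as you state it is false. The kernel of $h_{\Phi,\beta}$ is $\Phi(|y|^{-1}x)=\Phi\bigl(\operatorname{sgn}(y)\,x/y\bigr)$, not $\Phi(x/y)$, so after your substitutions you arrive at $\int_{\R}|x/y|^{(1+\gamma)/q}\Phi(x/|y|)F(y)\,d\mu(y)$, which is \emph{not} of the form $\int_G\Psi(y^{-1}x)F(y)\,d\mu(y)$ on $G=\R\setminus\{0\}$ unless $\Phi$ is even. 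The operator is simply not a convolution on the multiplicative group $\R^\ast$, and no splitting of the kernel into $\{|t|\le1\}$ and $\{|t|\ge1\}$ repairs this. The decomposition that is actually needed — and the one the paper uses — is by the \emph{signs} of $x$ and $y$: writing the norm as a sum over the four quadrants $(i,j)\in\{1,-1\}^2$ and substituting $-y=\widetilde y$, $-x=\widetilde x$ reduces each piece to a genuine convolution on $(\R_+,\tfrac{dx}{x})$ with kernel $\Phi(j\,\cdot)$, to which Lemma \ref{t3.8} applies.

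Two further points. First, the symmetry hypothesis $\|g\|_{L^s}=\|\widetilde g\|_{L^s}$ of Lemma \ref{t3.8}, which you identify as "the one place where a small genuine estimate is needed," is not an obstruction at all: both $\R_+$ and $\R^\ast$ are abelian, hence unimodular, so inversion preserves the Haar measure and the condition holds automatically for every $g$. Second, and consequently, your account of where the constant $2^{1/q'}$ comes from is wrong. In the paper it arises from recombining the four sign-pieces: after Young's inequality on each quadrant one uses $a^{1/p}+b^{1/p}\le 2^{1/p'}(a+b)^{1/p}$ and $a^{1/s}+b^{1/s}\le 2^{1/s'}(a+b)^{1/s}$ together with $\frac{1}{p'}+\frac{1}{s'}=\frac{1}{q'}$ (equivalent to $\frac1q=\frac1p+\frac1s-1$) to reassemble $\|\Phi\|$ and $\|f\|$ over all of $\R$. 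Your computation of the special case $\alpha=\gamma=0$ (giving $s=\frac{1}{1-\beta}$ and $K_{\Phi,s,q,0}=K_{\Phi,\beta,q}$) is correct, but as written the proposal has a genuine gap in the main step.
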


\begin{proof}The proof is based on the idea used in \cite[Proof of Theorem 3.1]{2015Gao}. Split the integration regions into $\mathbb{R}_+$ and $\mathbb{R}_-$. Substituting $-y = \widetilde{y}$, $-x = \widetilde{x}$ for the integrals over $\mathbb{R}_-$, we obtain
$$
\begin{aligned}
\|h_{\Phi,\beta}(f)\|_{L^q_{|\cdot|^\gamma}(\R)}
&=
\left(\int_{-\infty}^\infty \left|\int_{-\infty}^\infty \frac{\Phi(|y|^{-1}x)}{|y|^{1-\beta}} f(y) dy\right|^q |x|^\gamma\right)^\frac{1}{q}\\
&=
\left(\sum_{j=1,-1}\int_0^\infty \left| \sum_{i=1,-1} \int_0^\infty \frac{\Phi(jy^{-1}x)}{y^{1-\beta}} f(iy) dy \right|^q x^\gamma dx\right)^\frac{1}{q}\\
\end{aligned}
$$
Using  $(a+b)^{\frac{1}{q}} \leq a^{\frac{1}{q}} + b^{\frac{1}{q}}$ for $a,b>0$  and Minkowski inequality for $L^q_{|\cdot|^\gamma}(\R)$ norms, we obtain
$$
\|h_{\Phi,\beta}(f)\|_{L^q_{|\cdot|^\gamma}(\R)}
\leq
\sum_{j=1,-1}\sum_{i=1,-1}\left(\int_0^\infty \left| \int_0^\infty \frac{\Phi(jy^{-1}x)}{y^{1-\beta}} f(iy) dy \right|^q x^\gamma dx\right)^\frac{1}{q}.
$$
It is well known that the multiplicative group $\R_+$ is a locally compact group with Haar measure $\frac{dx}{x}$. Thus, by $\frac{1+\alpha}{p}-\frac{1+\gamma}{q}=\beta$ and Lemma \ref{t3.8} with $g(x)=\Phi(jx)(x)^\frac{1+\gamma}{q}$ satisfying $\|g\|_{L^s(\R_+, \frac{dx}{x})}=\|\tilde{g}\|_{L^s(\R_+,\frac{dx}{x})}$, we obtain that
$$
\begin{aligned}
\|h_{\Phi,\beta}(f)\|_{L^q_{|\cdot|^\gamma}(\R)}
&=\sum_{j=1,-1}\sum_{i=1,-1} \left( \int_0^\infty \left| \int_0^\infty \Phi(jy^{-1}x) (y^{-1}x)^\frac{1+\gamma}{q} f(iy) y^{\frac{1+\alpha}{p}} \frac{dy}{y}  \right|^q \frac{dx}{x} \right)^\frac{1}{q}\\
&\leq \sum_{j=1,-1}\sum_{i=1,-1} \left( \int_{0}^\infty |\Phi(jt)|^s t^{\frac{1+\gamma}{q}s-1} dt \right)^\frac{1}{s} \left(\int_0^\infty |f(it)|^p t^\alpha dt \right)^\frac{1}{p}.
\end{aligned}
$$
Substituting \(-t = \widetilde{t}\) for the above integrals involving $j=-1$ and $i=-1$ ,  then by inequality \(a^{\frac{1}{p}} + b^{\frac{1}{p}} \leq 2^{\frac{1}{p'}} (a + b)^{\frac{1}{p}}\), $a^{\frac{1}{s}} + b^{\frac{1}{s}} \leq 2^{\frac{1}{s'}} (a + b)^{\frac{1}{s}}$ for \(a, b > 0\) and $\frac{1}{q}=\frac{1}{p}+\frac{1}{s}-1$, we further obtain 
$$
\begin{aligned}
\|h_{\Phi,\beta}(f)\|_{L^q_{|\cdot|^\gamma}(\R)}
&\leq 
2^\frac{1}{q'} \left(\int_{-\infty}^\infty |\Phi(t)|^s|t|^{\frac{1+\gamma}{q}s-1} dt\right)^\frac{1}{s}\left(\int_{-\infty}^\infty |f(t)|^p |t|^\alpha dt\right)^\frac{1}{p}\\
&=
2^\frac{1}{q'} K_{\Phi,s,q,\gamma} \|f\|_{L^p_{|\cdot|^\alpha}(\R)}.
\end{aligned}
$$
\end{proof}

\begin{remark}\label{t1000}
Lemma \ref{t3.4} is not included in \cite[Theorem 3.1]{2015Gao}, because the authors \cite{2015Gao} only proved the case of \( H_{\Phi,\beta} \) for \( n \geq 2 \).
\end{remark}


Two-weight Hardy inequalities are shown in the following Lemmas \ref{t3.5} and \ref{t3.6}.

\begin{lemma}\label{t3.5}\cite[Theorem 2.1]{1997Kufner}
    Let \(u\) and \(v\) be weight functions on \(\R\). For \(1<p\leq q<\infty\) and \(0\leq\beta<1\), the inequality
    \begin{equation}\label{eq3.6}
    \left[ \int_{\R} \left(\frac{1}{|x|^{1-\beta}}\int_{|y|\leq |x|} f(y) dy \right)^q u(x) dx \right]^\frac{1}{q}\leq C \left(\int_{\R} f(x)^p v(x) \right)^\frac{1}{p}
    \end{equation}
    holds for \(f\geq0\) if and only if \((\ref{eq3.9})\) holds.
    Moreover, if \(C\) is the smallest constant for which \((\ref{eq3.6})\) holds, then
    \[
    A\leq C \leq A(p')^{\frac{1}{p'}}p^\frac{1}{q}.
    \]
\end{lemma}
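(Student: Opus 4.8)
The plan is to reduce this two-weight estimate on $\R$ to the classical one-dimensional Hardy inequality on the half-line $(0,\infty)$, and then to invoke the sharp-constant Muckenhoupt--Bradley theorem, which is precisely \cite[Theorem 2.1]{1997Kufner} in its half-line formulation; the kernel factor $|x|^{-(1-\beta)}$ plays no structural role and is simply absorbed into the weight $u$.

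The first step is to symmetrize in the sign of the variable. For $t>0$ put
\[
g(t):=f(t)+f(-t),\qquad U(t):=u(t)+u(-t),\qquad V(t):=\left(v(t)^{1-p'}+v(-t)^{1-p'}\right)^{1-p}.
\]
Splitting the outer integral over $\{x>0\}$ and $\{x<0\}$ and using $\int_{|y|\le|x|}f(y)\,dy=\int_0^{|x|}g(y)\,dy$, the left-hand side of (\ref{eq3.6}) equals \emph{exactly} $\left(\int_0^\infty\left(\frac{1}{x^{1-\beta}}\int_0^x g(y)\,dy\right)^q U(x)\,dx\right)^{1/q}$. For the right-hand side, the elementary minimization
\[
\min\left\{a^pv_1+b^pv_2:\; a,b\ge0,\; a+b=c\right\}=c^p\left(v_1^{1-p'}+v_2^{1-p'}\right)^{1-p}
\]
(a one-line Lagrange-multiplier computation), applied with $a=f(t)$, $b=f(-t)$, $c=g(t)$, $v_1=v(t)$, $v_2=v(-t)$, gives the pointwise bound $g(t)^pV(t)\le f(t)^pv(t)+f(-t)^pv(-t)$, hence $\int_0^\infty g^pV\le\int_\R f^pv$. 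Finally, since $(1-p)(1-p')=1$, one has $V^{1-p'}=v(\cdot)^{1-p'}+v(-\cdot)^{1-p'}$, so that for every $\alpha>0$
\[
\int_0^\alpha V(y)^{1-p'}\,dy=\int_{|y|\le\alpha}v(y)^{1-p'}\,dy,\qquad \int_\alpha^\infty\frac{U(x)}{x^{(1-\beta)q}}\,dx=\int_{|x|\ge\alpha}\frac{u(x)}{|x|^{(1-\beta)q}}\,dx;
\]
thus the quantity $A$ in (\ref{eq3.9}) is precisely the Muckenhoupt--Bradley characteristic of the weight pair $\left(x^{-(1-\beta)q}U(x),V\right)$ on $(0,\infty)$.

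Granting these reductions, sufficiency is immediate: if $A<\infty$, then the half-line Hardy inequality of \cite[Theorem 2.1]{1997Kufner} applied to the pair $\left(x^{-(1-\beta)q}U,V\right)$ bounds the displayed half-line quantity by $A(p')^{1/p'}p^{1/q}\left(\int_0^\infty g^pV\right)^{1/p}\le A(p')^{1/p'}p^{1/q}\left(\int_\R f^pv\right)^{1/p}$, which is (\ref{eq3.6}) with the asserted upper bound for $C$. For necessity and the lower bound $A\le C$, I would argue directly on $\R$: fixing $\alpha>0$ and testing (\ref{eq3.6}) with $f_N:=v^{1-p'}\chi_{\{|y|\le\alpha,\,1/N\le v\le N\}}$, the identity $p(1-p')+1=1-p'$ gives $\int_\R f_N^pv=\int_{\{|y|\le\alpha,\,1/N\le v\le N\}}v^{1-p'}=:J_N<\infty$, while for $|x|\ge\alpha$ the inner integral $\int_{|y|\le|x|}f_N$ equals $J_N$; dividing yields $\left(\int_{|x|\ge\alpha}u(x)|x|^{-(1-\beta)q}\,dx\right)^{1/q}J_N^{1/p'}\le C$, and letting $N\to\infty$ (monotone convergence) and then taking the supremum over $\alpha>0$ gives $A\le C$.

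The one piece of genuine content is the half-line inequality with the sharp bound $A\le C\le A(p')^{1/p'}p^{1/q}$ (due to Muckenhoupt for $p=q$ and to Bradley for $p\le q$), which I would cite rather than reprove; this is the main ``obstacle'' only in the sense that it is the part not reducible to bookkeeping. The remaining care is purely technical: the weights may vanish or equal $+\infty$ on null sets and $v^{1-p'}$ need not be locally integrable, which is why the test function is truncated and a monotone-convergence passage is used; and one must check that the symmetrization loses no constant, which it does not, since the left-hand reduction is an exact identity and the right-hand reduction is a pointwise inequality.
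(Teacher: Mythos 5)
The paper offers no proof of this lemma at all: it is imported verbatim as \cite[Theorem 2.1]{1997Kufner}, whose $n$-dimensional statement (with $\int_{|y|\le|x|}f$ over balls) specializes at $n=1$ to exactly the claim, so there is nothing in the paper to compare line by line. Your argument is a correct, essentially self-contained alternative: the symmetrization $g=f(\cdot)+f(-\cdot)$, $U=u(\cdot)+u(-\cdot)$, $V=(v(\cdot)^{1-p'}+v(-\cdot)^{1-p'})^{1-p}$ turns the left side into an exact half-line identity, the Lagrange computation (using $p(1-p')+1=1-p'$ and $(1-p)(1-p')=1$) gives the pointwise bound $g^pV\le f^pv(\cdot)+f(-\cdot)^pv(-\cdot)$ and the identity $\int_0^\alpha V^{1-p'}=\int_{|y|\le\alpha}v^{1-p'}$, so the half-line Muckenhoupt--Bradley characteristic of the pair $\bigl(x^{-(1-\beta)q}U,V\bigr)$ coincides with $A$ and the constant $A(p')^{1/p'}p^{1/q}$ survives the reduction unchanged; the direct test-function argument for $A\le C$ is also standard and correct (with the truncation handling non-local-integrability of $v^{1-p'}$, and the degenerate case $J_N\equiv 0$ being vacuous since then both factors in $A$ for that $\alpha$ multiply to zero). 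What your route buys is that only the classical one-dimensional half-line theorem needs to be cited rather than its higher-dimensional extension; what it costs is the extra bookkeeping of the symmetrized weights, which you have verified loses no constant. The only genuinely non-elementary ingredient, the sharp half-line inequality itself, is cited in both treatments, so the two approaches are on equal logical footing.
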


\begin{lemma}\label{t3.6}\cite[Page 10]{1997Kufner}
    Let \(u\) and \(v\) be weight functions on \(\R\). For \(1<p\leq q<\infty\) and \(0\leq\beta<1\), the inequality
    \begin{equation}\label{eq3.7}
        \left[\int_{\R} \left(\int_{|y|\geq |x|} \frac{f(y)}{|y|^{1-\beta}} dy \right)^q u(x) dx \right]^\frac{1}{q}\leq C \left(\int_{\R} f(x)^p v(x) \right)^\frac{1}{p}
    \end{equation}
    holds for \(f\geq0\) if and only if \((\ref{eq3.11})\) holds.
    Moreover, if \(C\) is the smallest constant for which \((\ref{eq3.7})\) holds, then
    \[
    B\leq C \leq B(p')^{\frac{1}{p'}}p^\frac{1}{q}.
    \]
\end{lemma}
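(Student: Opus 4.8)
The plan is to deduce Lemma~\ref{t3.6} from Lemma~\ref{t3.5} by duality, using that the operator appearing in (\ref{eq3.7}), written $Tf(x):=\int_{|y|\ge|x|}f(y)|y|^{\beta-1}\,dy$, is the formal adjoint with respect to Lebesgue measure of the operator $Sg(x):=|x|^{\beta-1}\int_{|y|\le|x|}g(y)\,dy$ that occurs in (\ref{eq3.6}).

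First I would record this adjointness: for nonnegative measurable $f,g$ on $\R$, Tonelli's theorem gives
$$
\int_{\R}\left(\int_{|y|\ge|x|}\frac{f(y)}{|y|^{1-\beta}}\,dy\right)g(x)\,dx=\int_{\R}f(y)\left(\frac{1}{|y|^{1-\beta}}\int_{|x|\le|y|}g(x)\,dx\right)dy,
$$
that is, $\langle Tf,g\rangle_{dx}=\langle f,Sg\rangle_{dx}$. Since $1<p,q<\infty$, the dual of $L^p_v(\R)$ under the $dx$-pairing is $L^{p'}_{v^{1-p'}}(\R)$ (because $-p'/p=1-p'$), and likewise $(L^q_u(\R))^{*}=L^{q'}_{u^{1-q'}}(\R)$; moreover, by the triangle inequality and positivity of $T$ one may restrict to $f,g\ge0$. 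Taking best constants, this yields
$$
C=\sup_{f}\frac{\|Tf\|_{L^q_u(\R)}}{\|f\|_{L^p_v(\R)}}=\sup_{f}\sup_{g}\frac{\langle Tf,g\rangle_{dx}}{\|f\|_{L^p_v(\R)}\,\|g\|_{L^{q'}_{u^{1-q'}}(\R)}}=\sup_{g}\frac{\|Sg\|_{L^{p'}_{v^{1-p'}}(\R)}}{\|g\|_{L^{q'}_{u^{1-q'}}(\R)}},
$$
so that (\ref{eq3.7}) holds for all $f\ge0$ if and only if $S$ is bounded from $L^{q'}_{u^{1-q'}}(\R)$ to $L^{p'}_{v^{1-p'}}(\R)$, with equal best constants.

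Next I would apply Lemma~\ref{t3.5} to $S$, with the exponents $p,q$ there replaced by $q',p'$ and the weights $v,u$ there replaced by $u^{1-q'},v^{1-p'}$; note that $1<p\le q<\infty$ is equivalent to $1<q'\le p'<\infty$, as Lemma~\ref{t3.5} requires, that $0\le\beta<1$ is untouched, and that the factor $|x|^{1-\beta}$ occurring in $S$ is exactly the one allowed there. It then remains only to verify that the Muckenhoupt-type quantity coming from (\ref{eq3.9}) under this substitution is precisely the $B$ of (\ref{eq3.11}): this is a short computation resting on $(1-q')(1-q)=1$ (equivalently $q+q'=qq'$), by which $(u^{1-q'})^{1-q}=u$, together with the fact that the target-weight factor $v^{1-p'}$ and the factor $|x|^{-(1-\beta)p'}$ pass over verbatim. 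Hence Lemma~\ref{t3.5} delivers both directions of the equivalence, the lower bound $B\le C$, and a Muckenhoupt-type upper bound for $C$.

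The only delicate point I anticipate is the exact displayed constant: transporting the upper estimate of Lemma~\ref{t3.5} through the substitution gives $C\le B\,q^{1/q}(q')^{1/p'}$, which already establishes the ``if'' direction, and to reach the stated bound $C\le B(p')^{1/p'}p^{1/q}$ one would either verify the elementary inequality $q^{1/q}(q')^{1/p'}\le p^{1/q}(p')^{1/p'}$ valid for $1<p\le q<\infty$, or invoke the adjoint form of Kufner's theorem directly, as in \cite[Page 10]{1997Kufner} (equivalently, run the Muckenhoupt dyadic argument for $T$ itself over the annuli $\{2^{k}\le|x|<2^{k+1}\}$, $k\in\Z$). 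I expect everything else to be routine once the $T$–$S$ duality is set up.
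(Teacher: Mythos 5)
The paper does not actually prove this lemma: it is quoted verbatim from \cite[Page 10]{1997Kufner}, just as Lemma \ref{t3.5} is, so there is no in-paper argument to compare against. Your duality reduction is the standard and correct way to pass from Lemma \ref{t3.5} to Lemma \ref{t3.6}: the adjointness $\langle Tf,g\rangle=\langle f,Sg\rangle$, the identifications $(L^p_v)^*=L^{p'}_{v^{1-p'}}$ and $(L^q_u)^*=L^{q'}_{u^{1-q'}}$ under the $dx$-pairing (with the duality attained for nonnegative functions), and the bookkeeping $(u^{1-q'})^{1-q}=u$ all check out, so you correctly obtain the equivalence of \eqref{eq3.7} with $B<\infty$ and the lower bound $B\le C$. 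This is a legitimate route the paper does not take, and it has the virtue of needing only one of the two cited Hardy inequalities as a black box.

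The one genuine flaw is in your treatment of the upper constant. Duality gives $C\le B\,q^{1/q}(q')^{1/p'}$, and the elementary inequality $q^{1/q}(q')^{1/p'}\le p^{1/q}(p')^{1/p'}$ that you propose to verify is \emph{false} for $1<p\le q<\infty$ in general: letting $p\to 1^+$ with $q$ fixed, the right-hand side tends to $1$ while the left-hand side tends to $q^{1/q}>1$ (concretely, $p=1.01$, $q=2$ gives $q^{1/q}(q')^{1/p'}\approx 1.42$ versus $p^{1/q}(p')^{1/p'}\approx 1.05$). So your first bridge does not work, and your fallback of ``invoking the adjoint form of Kufner's theorem directly'' is just citing the lemma being proved. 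This matters here only because the explicit constant $(p')^{1/p'}p^{1/q}$ from Lemma \ref{t3.6} is propagated into the quantitative upper bound for $C$ in Theorem \ref{t1.10}; for the qualitative boundedness and the characterization by $B<\infty$, your argument is complete. To get the stated constant you would indeed need to run the Muckenhoupt--Bradley argument directly on $T$, as you suggest in your last sentence.
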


\begin{proof}[Proof of Theorem \ref{t1.9}]
    Necessity. Let \(\alpha>0\) be a fixed number. Suppose that \((\ref{eq2.2})\) holds. Let \(f\) be a function defined on \(\R\) and \({\rm supp}~(f)\subset [-\alpha,\alpha]\). If \(|x|\geq \alpha\) and \(|y|\leq \alpha\), then \(|y|^{-1}|x|=||y|^{-1}x|\geq1\), thus, by \((\ref{eq2.3})\), we have
    
    \begin{align}
        \|h_{\Phi,\beta}(f)\|_{L^q_u(\R)}&=
        \left( \int_{\R}\left|\int_{\R} \frac{\Phi(|y|^{-1}x)}{|y|^{1-\beta}} f(y) dy\right|^q u(x) dx \right)^\frac{1}{q}\nonumber\\ 
        &\geq \left( \int_{|x|\geq \alpha} \left| \int_{|y|\leq \alpha} \frac{\Phi(|y|^{-1}x)}{|y|^{1-\beta}} f(y) dy \right|^q u(x) dx \right)^\frac{1}{q}\nonumber\\
        &\geq C_1 \left( \int_{|x|\geq \alpha} \frac{u(x)}{|x|^{q(1-\beta)}} dx\right)^\frac{1}{q} \left(\int_{|y|\leq \alpha} f(y) dy \right).\label{eq3.19}
    \end{align}

    We choose the test function \(f=f_\alpha (x):=v(x)^{1-p'} \chi_{\{|x|\leq \alpha\}}(x)\), thus
    \begin{equation}\label{eq3.20}
    \|f_\alpha \|_{L^p_v(\R)}=\left(\int_{|x|\leq \alpha} v(x)^{1-p'} dx\right)^\frac{1}{p}.
    \end{equation}
	Then, by $(\ref{eq3.19})$ with $f=f_\alpha$, $(\ref{eq2.2})$ and $(\ref{eq3.20})$, we obtain
    \[
	\begin{aligned}
    C_1 \left(\int_{|x|\geq \alpha} \frac{u(x)}{|x|^{q(1-\beta)}} dx\right)^\frac{1}{q} \left( \int_{|x|\leq \alpha} 
    v(x)^{1-p'} dx \right) 
	&\leq \|h_{\Phi,\beta}(f)\|_{L^q_u(\R)}\\
	&\leq C \|f\|_{L^p_v(\R)}\\
    &=C\left( \int_{|x|\leq \alpha} v(x)^{1-p'} dx \right)^\frac{1}{p},
	\end{aligned}
    \]
    which implies that
    \[
    C_1 \left(\int_{|x|\geq \alpha} \frac{u(x)}{|x|^{q(1-\beta)}} dx\right)^\frac{1}{q} \left( \int_{|x|\leq \alpha} 
    v(x)^{1-p'} dx \right)^\frac{1}{p'}\leq C .
    \]

    Sufficiency. By \((\ref{eq3.18})\), \((\ref{eq2.3})\) and $q(1-\beta)>1$, we have
    \begin{align}
    K_{\Phi,\beta,q}
    &\leq \left( \int_{|t|\leq 1} \Phi(t)^\frac{1}{1-\beta}|t|^{-1+\frac{1}{q(1-\beta)}} dt \right)^{1-\beta} + \left( \int_{|t|\geq 1} \Phi(t)^\frac{1}{1-\beta} |t|^{-1+\frac{1}{q(1-\beta)}} dt \right)^{1-\beta}\nonumber\\
    &\leq \left( \int_{|t|\leq 1} \Phi(t)^\frac{1}{1-\beta} |t|^{-1+\frac{1}{q(1-\beta)}} dt \right)^{1-\beta}+C_2 \left( \int_{|t|\geq1} |t|^{-2+\frac{1}{q(1-\beta)}} dt \right)^{1-\beta}<\infty.\label{eq3.25}
    \end{align}
    Therefore, the condition \((\ref{eq3.8})\) of Lemma \ref{t3.4} is satisfied.

    Same to \cite[Proof of Theorem 3.1]{2021Safa}, without loss of generality, we may assume that for \(x\geq0\), \(u\) has the following form,
    \[
    u(x)=u(0)+\int_0^x \psi(t) dt=u(0)+\int_0^{|x|} \psi(t) dt.
    \]
	where $u(0)=\lim\limits_{t \rightarrow+0} u(t)$ and $\psi$ is a positive function on $(0, \infty)$. In other words, let $u$ be an absolute continuous function on $(0, \infty)$. Indeed, for any increasing function $u$ on $(0, \infty)$ there exists a sequence of absolutely continuous functions $\left\{\varphi_n\right\}$ such that $\lim\limits_{n \rightarrow \infty} \varphi_n(t)=u(t)$, $0 \leq \varphi_n(t) \leq u(t)$ a.e. $t>0$ and $\varphi_n(0)=u(0)$. Furthermore the functions $\varphi_n(t)$ are increasing, and besides
	$$
	\varphi_n(t)=\varphi_n(0)+\int_0^t \varphi_n^{\prime}(\tau) d \tau.
	$$

    Since \(u\) is an even function on \(\mathbb{R}\), when \(x < 0\), we have
    \[
    u(x)=u(-x)=u(0)+\int_0^{-x} \psi(t) dt=u(0)+\int_0^{|x|} \psi(t) dt.
    \]
    We define
	$$
	\Psi(t):=
	\begin{cases}
	\psi(t), &t>0\\
	0,		 &t\leq0.
	\end{cases}
	$$
	 Therefore, for \(x\in\mathbb{R}\),
    \begin{equation}\label{eq3.27}
    u(x)=u(0)+\int_{|t|\leq |x|} \Psi(t) dt.
    \end{equation}

    We start to estimate \(\|h_{\Phi,\beta}(f)\|_{L^q_u}\). By $(\ref{eq3.27})$, we have
    \[
    \begin{aligned}
    \|h_{\Phi,\beta}(f)\|_{L^q_u(\R)}
    &\leq
    \left(\int_{\R} |h_{\Phi,\beta}(f)|^q u(0)
     dx\right)^\frac{1}{q}
    +
    \left(\int_{\R} |h_{\Phi,\beta}(f)|^q\left(\int_{|t|\leq|x|} \Psi(t) dt\right)
     dx\right)^\frac{1}{q}\\
     &=:E_1+E_2.
    \end{aligned}
    \]
    From \((\ref{eq3.9})\) and that \(u\), \(v\) are even functions on \(\mathbb{R}\), increasing on \(\mathbb{R}_+\) and $q(1-\beta)>1$, it follows that 
    \[
    A
    \geq
    \sup_{t>0} \frac{[u(t)]^\frac{1}{q}}{[v(t)]^\frac{1}{p}}
    \left( \int_{|x|\geq t} |x|^{q(\beta-1)} dx \right)^\frac{1}{q} 
    \left(\int_{|x|\leq t} dx\right)^\frac{1}{p'}
    =
    2^{1-\beta}[q(1-\beta)-1]^{-\frac{1}{q}}
    \sup_{t>0} \frac{[u(t)]^\frac{1}{q}}{[v(t)]^\frac{1}{p}},
    \]
    Therefore, for all \(t>0\), we have
    \begin{equation}\label{eq3.10}
        u(t)\leq 2^{(\beta-1)q} A^q [q(1-\beta)-1][v(t)]^\frac{q}{p}.
    \end{equation}
    Since \(u\) and \(v\) are even functions, \((\ref{eq3.10})\) also holds for \(t < 0\). 
    
    By Lemma \ref{t3.4} with $(\ref{eq3.25})$, and \((\ref{eq3.10})\), we have
    \[
    \begin{aligned}
        E_1
        &=[u(0)]^\frac{1}{q}\left(\int_{\R}|h_{\Phi,\beta}(f)(x)|^qdx\right)^\frac{1}{q}\\
        &\leq 2^\frac{1}{q'} K_{\Phi,\beta,q} [u(0)]^\frac{1}{q}\left(\int_{\R}|f(x)|^p dx\right)^\frac{1}{p}\leq 2^\frac{1}{q'} K_{\Phi,\beta,q} \left(\int_{\R}|f(x)|^p [u(x)]^\frac{p}{q} dx\right)^\frac{1}{p}\\
        &\leq 2^\frac{1}{q'} K_{\Phi,\beta,q} 2^{\beta-1} A [q(1-\beta)-1]^\frac{1}{q}\left(\int_{\R}|f(x)|^pv(x)dx\right)^\frac{1}{p}.
    \end{aligned}
    \]
    Let us estimate \(E_2\):
    \[
    \begin{aligned}
        E_2
        &=\left( \int_{\R} |h_{\Phi,\beta}(f)(x)|^q\left( 
        \int_{\R} \Psi(t) \chi_{\{|t|\leq|x|\}}(t) dt \right) dx \right)^\frac{1}{q}\\
        &=\left( \int_{\R} \Psi(t) \int_{|x|\geq|t|} |h_{\Phi,\beta}(f)(x)|^q dx dt \right)^\frac{1}{q}\\
        &\leq 2^\frac{1}{q'}\left( \int_{\R} \Psi(t) \int_{|x|\geq|t|} \left| \int_{|y|\leq|t|} \frac{\Phi(|y|^{-1}x)}{|y|^{1-\beta}} f(y)
        dy\right|^q dx dt \right)^\frac{1}{q}\\
        &~~~~~~~~+
        2^\frac{1}{q'}\left( \int_{\R} \Psi(t) \int_{|x|\geq|t|} \left| \int_{|y|\geq|t|} \frac{\Phi(|y|^{-1}x)}{|y|^{1-\beta}} f(y)
        dy\right|^q dx dt \right)^\frac{1}{q}\\
        &=:E_{21}+E_{22}.
    \end{aligned}
    \]
    
    We estimate \(E_{22}\). By Lemma \ref{t3.4} with $(\ref{eq3.25})$, generalized Minkowski's inequality with \(\frac{q}{p}\geq1\), $(\ref{eq3.27})$ and  \((\ref{eq3.10})\), we have
    \[
    \begin{aligned}
        E_{22}
        &\leq 2^\frac{1}{q'} \left( \int_{\R} \Psi(t) \int_{\R} \left| \int_{\R} \frac{\Phi(|y|^{-1}x)}{|y|^{1-\beta}} f(y) \chi_{\{ |y|\geq|t|\}}(y) dy \right|^q dx dt \right)^\frac{1}{q}\\
        &\leq 2^\frac{2}{q'} K_{\Phi,\beta,q} \left( \int_{\R} \left( \int_{\R} |f(x)|^p [\Psi(t)]^\frac{p}{q} \chi_{\{|x|\geq|t|\}}(x) dx \right)^\frac{q}{p} dt \right)^\frac{1}{q}\\
        &\leq 2^\frac{2}{q'} K_{\Phi,\beta,q} \left( \int_{\R} |f(x)|^p \left( \int_{|t|\leq|x|} \Psi(t) dt \right)^\frac{p}{q} dx\right)^\frac{1}{p}\\
        &\leq 2^\frac{2}{q'} K_{\Phi,\beta,q} \left( \int_{\R} |f(x)|^p [u(x)]^\frac{p}{q} \right)^\frac{1}{p}\\
        &\leq 2^\frac{2}{q'} K_{\Phi,\beta,q} A 2^{\beta-1} [q(1-\beta)-1]^\frac{1}{q} \left(\int_{\R} |f(x)|^p v(x)
         dx\right)^\frac{1}{p}.
    \end{aligned}
    \]

    Finally, we estimate \(E_{21}\). If \(|x|\geq|t|\) and \(|y|\leq |t|\), then \(|y|^{-1}x\leq -1\) or \(|y|^{-1}x\geq1\). Therefore, by $(\ref{eq2.3})$ with $|y^{-1}x|\geq 1$, i.e., \(\Phi(|y|^{-1}x)\leq C_2(|y|^{-1}x)^{\beta-1}\), we obtain
    \begin{align}
        E_{21}
        &\leq 2^\frac{1}{q'} \left( \int_{\R} \Psi(t) \int_{|x|\geq|t|} \left( \int_{|y|\leq|t|} \frac{\Phi(|y|^{-1}x)}{|y|^{1-\beta}} |f(y)| dy \right)^q dx dt \right)^\frac{1}{q}\nonumber\\
        &\leq 2^\frac{1}{q'} C_2 \left( \int_{\R} \Psi(t) \left(\int_{|x|\geq|t|} |x|^{(\beta-1)q} dx\right) \left( \int_{|y|\leq|t|}  |f(y)| dy \right)^q dx dt \right)^\frac{1}{q}\nonumber\\
        &=2^\frac{1}{q'} C_2 \left[ \frac{2}{(1-\beta)q-1} \right]^\frac{1}{q} \left( \int_{\R} \Psi(t) |t|^{(\beta-1)q+1} \left(\int_{|y|\leq|t|} |f(y)| dy\right)^q dt \right)^\frac{1}{q}.\label{eq3.28}
    \end{align}

    Notice that, for any $\alpha>0$, by $q(1-\beta)>1$ and $(\ref{eq3.27})$, we obtain
    \begin{align}
        \left[\frac{2}{(1-\beta)q-1}\right] \int_{|t|\geq \alpha} \Psi(t) |t|^{(\beta-1)q+1} dt
        &=\int_{\R}\int_{\R} \Psi(t) |x|^{(\beta-1)q} \chi_{\{ |t|\geq\alpha \}}(t) \chi_{\{|x|\geq|t|\}}(x) dtdx\nonumber\\
        &\leq \int_{|x|\geq \alpha} |x|^{(\beta-1)q} \int_{ |t|\leq |x|} \Psi(t) dt dx\nonumber\\
        &\leq \int_{|x|\geq\alpha} \frac{u(x)}{|x|^{(\beta-1)q}} dx\label{eq3.29}\nonumber.
    \end{align}
    From this and $(\ref{eq3.9})$, it follows that
    \[
    \left[ \frac{2}{(1-\beta)q-1} \right]^\frac{1}{q}
    \sup_{\alpha>0}
    \left( \int_{|t|\geq \alpha} \Psi(t) |t|^{(\beta-1)q+1}dt \right)^\frac{1}{q}
    \left( \int_{|t|\leq \alpha} v(t)^{1-p'} dt \right)^\frac{1}{p'}
    \leq A<\infty,
    \]
    which implies that $(\ref{eq3.9})$ holds true for $u(t)=\frac{2}{(1-\beta)q-1}\Psi(t)|t|$ and $v(t)$. Therefore, by $(\ref{eq3.28})$ and Lemma \ref{t3.5}, we obtain
    \[
    E_{21}\leq A C_2  2^\frac{1}{q'} (p')^\frac{1}{p'} p^\frac{1}{q} \left( \int_{\R} |f(x)|^p v(x) dx \right)^\frac{1}{p}.
    \]
    The proof of Theorem \ref{t1.9} is completed.
\end{proof}

\begin{proof}[Proof of Theorem \ref{t1.10}]
    Necessity. Let \(\alpha>0\) be a fixed number. Suppose that \((\ref{eq2.5})\) holds. Let \(f\) be a function defined on \(\R\) and \({\rm supp}~(f)\subset (-\infty,-\alpha] \cup [\alpha,\infty)\). If \(|x|\leq \alpha\) and \(|y|\geq \alpha\), then \(|y|^{-1}|x|=||y|^{-1}x|\leq1\), thus, by \((\ref{eq2.4})\), we have
    \begin{align}
        \|h_{\Phi,\beta}(f)\|_{L^q_u(\R)}&=
        \left( \int_{\R}\left|\int_{\R} \frac{\Phi(|y|^{-1}x)}{|y|^{1-\beta}} f(y) dy\right|^q u(x) dx \right)^\frac{1}{q}\nonumber\\
        &\geq \left( \int_{|x|\leq \alpha} \left| \int_{|y|\geq \alpha} \frac{\Phi(|y|^{-1}x)}{|y|^{1-\beta}} f(y) dy \right|^q u(x) dx \right)^\frac{1}{q}\nonumber\\
        &\geq C_1' \left( \int_{|x|\leq \alpha} u(x) dx\right)^\frac{1}{q} \left(\int_{|y|\geq \alpha} \frac{f(y)}{|y|^{1-\beta}} dy \right). \label{eq3.21}
    \end{align}

    We choose the test function \(f=f_\alpha (x):=[|x|^{1-\beta}v(x)]^{1-p'}\chi_{\{|x|\geq \alpha\}}(x)\), thus
    \begin{equation}\label{eq3.22}
    \|f_\alpha \|_{L^p_v(\R)}=\left(\int_{|x|\geq \alpha} \frac{v(x)^{1-p'}}{|x|^{(1-\beta)p'}} dx\right)^\frac{1}{p}.
    \end{equation}
    Then, by $(\ref{eq3.21})$ with $f=f_\alpha$, $(\ref{eq2.5})$ and $(\ref{eq3.22})$ we obtain
    \[
	\begin{aligned}
    C_1' \left(\int_{|x|\leq \alpha} u(x) dx\right)^\frac{1}{q} \left( \int_{|x|\geq \alpha} 
    \frac{v(x)^{1-p'}}{|x|^{(1-\beta)p'}} dx \right) 
	&\leq \|h_{\Phi,\beta}(f)\|_{L^q_u(\R)} \\
	&\leq C\|f\|_{L^p_v(\R)}\\
	&= C\left( \int_{|x|\geq \alpha} \frac{v(x)^{1-p'}}{|x|^{(1-\beta)p'}} dx \right)^\frac{1}{p},
	\end{aligned}
    \]
    which implies that
    \[
    C_1' \left(\int_{|x|\leq \alpha} u(x) dx\right)^\frac{1}{q} \left( \int_{|x|\geq \alpha} 
    \frac{v(x)^{1-p'}}{|x|^{(1-\beta)p'}} dx \right)^\frac{1}{p'}\leq C .
    \]

    Sufficiency. By  $(\ref{eq2.4})$ and $q(1-\beta)>0$, we have
    \begin{align}
    K_{\Phi,\beta,q}
    &\leq \left( \int_{|t|\leq 1} \Phi(t)^\frac{1}{1-\beta} |t|^{-1+\frac{1}{q(1-\beta)}} dt \right)^{1-\beta} + \left( \int_{|t|\geq 1} \Phi(t)^\frac{1}{1-\beta} |t|^{\frac{1}{q(1-\beta)}-1} dt \right)^{1-\beta}\nonumber\\
    &\leq C_2' \left( \int_{|t|\leq 1}|t|^{\frac{1}{q(1-\beta)}-1} dt \right)^{1-\beta}
    +
     \left( \int_{|t|\geq1} \Phi(t)^\frac{1}{1-\beta} |t|^{\frac{1}{q(1-\beta)}-1} dt \right)^{1-\beta}<\infty.\label{eq3.26}
    \end{align}
    Therefore, the condition \((\ref{eq3.8})\) of Lemma \ref{t3.4} is satisfied.

    Same to \cite[Proof of Theorem 3.3]{2021Safa}, without loss of generality, we may assume that for \(x\geq0\), \(u\) has the following form:
    \[
    u(x)=u(\infty)+\int_x^\infty \psi(t) dt=u(\infty)+\int_{|x|}^\infty \psi(t) dt.
    \]
	where $u(\infty)=\lim _{t \rightarrow \infty} u(t)$ and $\psi$ is a positive function on $(0, \infty)$. Indeed, for any decreasing function $u$ on  $(0, \infty)$ there exists a sequence of absolutely continuous functions $\left\{\varphi_n\right\}$ such that $\lim _{n \rightarrow \infty} \varphi_n(t)=u(t), 0 \leq \varphi_n(t) \leq u(t)$ a.e. $t>0$ and $\varphi_n(\infty)=u(\infty)$. Furthermore the functions $\varphi_n(t)$ are decreasing, and besides

	$$
	\varphi_n(t)=\varphi_n(\infty)+\int_t^{\infty}\left(-\varphi_n^{\prime}(\tau)\right) d \tau.
	$$

    Since \(u\) is an even function on \(\mathbb{R}\), when \(x < 0\), we have
    \[
    u(x)=u(-x)=u(\infty)+\int_{-x}^{\infty} \psi(t) dt=u(\infty)+\int_{|x|}^{\infty} \psi(t) dt.
    \]
    We define 
	$$
	\Psi(t):=
	\begin{cases}
	\psi(t),&t<0\\
	0,              &t\geq0.
	\end{cases}
	$$
	Therefore, for \(x\in\mathbb{R}\),
    \begin{equation}\label{eq3.30}
    u(x)=u(\infty)+\int_{|t|\geq |x|} \Psi(t) dt.
    \end{equation}

    We start to estimate \(\|h_{\Phi,\beta}(f)\|_{L^q_u}\). By $(\ref{eq3.30})$, we have
    \[
    \begin{aligned}
    \|h_{\Phi,\beta}(f)\|_{L^q_u(\R)}
    &\leq
    \left(\int_{\R} |h_{\Phi,\beta}(f)|^q u(\infty)
     dx\right)^\frac{1}{q}
    +
    \left(\int_{\R} |h_{\Phi,\beta}(f)|^q\left(\int_{|t|\leq|x|} \Psi(t) dt\right)
     dx\right)^\frac{1}{q}\\
     &=:F_1+F_2.
    \end{aligned}
    \]
	From $(\ref{eq3.11})$, that $u,v$ are even function on $\R$, decreasing on $\R_+$ and $p'(1-\beta)>1$, it follows that
    \[
    B
    \geq
    2^{1-\beta} \left[ \frac{1}{(1-\beta)p'-1} \right]^\frac{1}{p'}
    \sup_{t>0} \frac{[u(t)]^\frac{1}{q}}{[v(t)]^\frac{1}{p}}.
    \]
    Therefore, for all \(t>0\), we have
    \begin{equation}\label{eq3.12}
        u(t)\leq B^q 2^{q(\beta-1)} [(1-\beta)p'-1]^\frac{q}{p'} [v(t)]^\frac{q}{p}.
    \end{equation}
    Since \(u\) and \(v\) are even functions, \((\ref{eq3.12})\) also holds for \(t < 0\). 
    
    By Lemma \ref{t3.4} with $(\ref{eq3.26})$, and \((\ref{eq3.12})\), we have
    \[
    \begin{aligned}
        F_1
        &=[u(\infty)]^\frac{1}{q}\left(\int_{\R}|h_{\Phi,\beta}(f)(x)|^qdx\right)^\frac{1}{q}\\
        &\leq 2^\frac{1}{q'} K_{\Phi,\beta,q} [u(\infty)]^\frac{1}{q}\left(\int_{\R}|f(x)|^p dx\right)^\frac{1}{p}\leq  2^\frac{1}{q'} K_{\Phi,\beta,q} \left(\int_{\R}|f(x)|^p [u(x)]^\frac{p}{q} dx\right)^\frac{1}{p}\\
        &\leq  2^\frac{1}{q'} K_{\Phi,\beta,q} B 2^{\beta-1} [(1-\beta)p'-1]^\frac{1}{p'} \left(\int_{\R}|f(x)|^pv(x)\right)^\frac{1}{p}.
    \end{aligned}
    \]
    Let us estimate \(F_2\):
    \[
    \begin{aligned}
        F_2
        &=\left( \int_{\R} |h_{\Phi,\beta}(f)(x)|^q\left( 
        \int_{\R} \Psi(t) \chi_{\{|t|\geq|x|\}}(t) dt \right) dx \right)^\frac{1}{q}\\
        &=\left( \int_{\R} \Psi(t) \int_{|x|\leq|t|} |h_{\Phi,\beta}(f)(x)|^q dx dt \right)^\frac{1}{q}\\
        &\leq 2^\frac{1}{q'}\left( \int_{\R} \Psi(t) \int_{|x|\leq|t|} \left| \int_{|y|\leq|t|} \frac{\Phi(|y|^{-1}x)}{|y|^{1-\beta}} f(y)
        dy\right|^q dx dt \right)^\frac{1}{q}\\
        &~~~~~~~~+
        2^\frac{1}{q'}\left( \int_{\R} \Psi(t) \int_{|x|\leq|t|} \left| \int_{|y|\geq|t|} \frac{\Phi(|y|^{-1}x)}{|y|^{1-\beta}} f(y)
        dy\right|^q dx dt \right)^\frac{1}{q}\\
        &=:F_{21}+F_{22}.
    \end{aligned}
    \]
    
    We estimate \(F_{21}\). By Lemma \ref{t3.4}  with $(\ref{eq3.26})$,  generalized Minkowski's inequality with  \(\frac{q}{p}\geq1\), $(\ref{eq3.30})$ and  \((\ref{eq3.12})\), we have
    \[
    \begin{aligned}
        F_{21}
        &\leq 2^\frac{1}{q'} \left( \int_{\R} \Psi(t) \int_{\R} \left| \int_{\R} \frac{\Phi(|y|^{-1}x)}{|y|^{1-\beta}} f(y) \chi_{\{ |y|\leq|t|\}}(y) dy \right|^q dx dt \right)^\frac{1}{q}\\
        &\leq 2^\frac{2}{q'} K_{\Phi,\beta,q} \left( \int_{\R} \left( \int_{\R} |f(x)|^p \chi_{\{|x|\leq|t|\}}(x) [\Psi(t)]^\frac{p}{q} dx \right)^\frac{q}{p} dt \right)^\frac{1}{q}\\
        &\leq 2^\frac{2}{q'} K_{\Phi,\beta,q} \left( \int_{\R} |f(x)|^p \left( \int_{|t|\geq|x|} \Psi(t) dt \right)^\frac{p}{q} dx\right)^\frac{1}{p}\\
        &\leq 2^\frac{2}{q'} K_{\Phi,\beta,q} \left( \int_{\R} |f(x)|^p [u(x)]^\frac{p}{q} \right)^\frac{1}{p}\\
        &\leq 2^\frac{2}{q'} K_{\Phi,\beta,q} B 2^{\beta-1} [(1-\beta)p'-1]^\frac{1}{p'} \left(\int_{\R} |f(x)|^p v(x)
         dx\right)^\frac{1}{p}.
    \end{aligned}
    \]

    Finally, we estimate \(F_{22}\). If \(|x|\leq|t|\) and \(|y|\geq |t|\), then \(|y|^{-1}|x|=||y|^{-1}x|\leq1\). Therefore, by $(\ref{eq2.4})$ with $|y^{-1}x|\leq 1$, i.e., \(\Phi(|y|^{-1}x)\leq C_2'\), we obtain
    \begin{align}
        F_{22}
        &\leq 2^\frac{1}{q'} \left( \int_{\R} \Psi(t) \int_{|x|\leq|t|} \left( \int_{|y|\geq|t|} \frac{\Phi(|y|^{-1}x)}{|y|^{1-\beta}} |f(y)| dy \right)^q dx dt \right)^\frac{1}{q}\nonumber\\
        &\leq 2^\frac{1}{q'} C_2' \left( \int_{\R} \Psi(t) \left(\int_{|x|\leq|t|}  dx\right) \left( \int_{|y|\geq|t|}  \frac{|f(y)|}{|y|^{1-\beta}} dy \right)^q dx dt \right)^\frac{1}{q}\nonumber\\
        &=2^\frac{1}{q'} C_2' \left( \int_{\R} \Psi(t) 2|t| \left(\int_{|y|\geq|t|} \frac{|f(y)|}{|y|^{1-\beta}} dy\right)^q dt \right)^\frac{1}{q}.\label{eq3.31}
    \end{align}
	Notice that, for any $\alpha>0$, by $(\ref{eq3.30})$, we obtain
    \begin{align}
         \int_{|t|\leq \alpha} \Psi(t) 2|t| dt
        &=\int_{\R}\int_{\R} \Psi(t) \chi_{\{|t|\leq \alpha\}}(t) \chi_{\{|x|\leq |t|\}}(x) dt dx\nonumber\\
        &\leq \int_{|x|\leq \alpha} \int_{|x|\leq |t|} \Psi(t) dt dx\nonumber\\
        &\leq \int_{|x|\leq\alpha} u(x) dx .\nonumber
    \end{align}
    From this and $(\ref{eq3.11})$, it follows that
    \[
    \sup_{\alpha>0}
    \left( \int_{|t|\leq \alpha} \Psi(t) 2|t|dt \right)^\frac{1}{q}
    \left( \int_{|t|\geq \alpha} \frac{v(t)^{1-p'}}{|t|^{(1-\beta)p'}} dt\right)^\frac{1}{p'}
    \leq B<\infty,
    \]
    which implies that $(\ref{eq3.11})$ holds true for $u(t)=2\Psi(t)|t|$ and $v(t)$. Therefore, by $(\ref{eq3.31})$ and Lemma \ref{t3.6}, we obtain
    \[
    F_{22}\leq B C_2' 2^\frac{1}{q'} (p')^\frac{1}{p'} p^\frac{1}{q} \left( \int_{\R} |f(x)|^p v(x) dx \right)^\frac{1}{p}.
    \]
    The proof of Theorem \ref{t1.10} is completed.
\end{proof}


\begin{lemma}\label{t3.1}
    Let \(0<p<\infty\) and \(-1<\alpha<\infty\). If \(f\in H^p_{|\cdot|^\alpha}(\R)\), then for any \(s\in (0,\infty)\),
    \[
    	\left\|s^{\frac{1+\alpha}{p}}f(s\cdot)\right\|_{H^p_{|\cdot|^\alpha}(\R)}
		=
		\|f\|_{H^p_{|\cdot|^\alpha}(\R)}.
    \]
\end{lemma}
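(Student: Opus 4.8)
The plan is to use the radial maximal function characterization of $H^p_{|\cdot|^\alpha}(\R)$ from Definition \ref{def2.1} (which makes sense verbatim for all $0<p<\infty$) and to track how a dilation of $f$ passes through the maximal operator $M_\varphi^+$. Fix once and for all a $\varphi\in\mathscr{S}(\R)$ with $\int_\R\varphi\neq0$, fix $s\in(0,\infty)$, and set $g(x):=s^{\frac{1+\alpha}{p}}f(sx)$ (defined, when $f$ is a tempered distribution, by the usual duality formula for dilations). Recall $\varphi_t(x)=t^{-1}\varphi(x/t)$.

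First I would compute the convolution $\varphi_t*g$ explicitly. For $x\in\R$ and $t>0$, writing out the convolution (for Schwartz $f$, and then extending to tempered distributions by the standard convolution identities) and substituting $z=sy$ gives
\[
\varphi_t*g(x)=\int_\R\frac{1}{t}\varphi\Big(\frac{x-y}{t}\Big)s^{\frac{1+\alpha}{p}}f(sy)\,dy=s^{\frac{1+\alpha}{p}}\int_\R\frac{1}{st}\varphi\Big(\frac{sx-z}{st}\Big)f(z)\,dz=s^{\frac{1+\alpha}{p}}\,(\varphi_{st}*f)(sx).
\]
Taking absolute values and the supremum over $t\in(0,\infty)$, and using that $t\mapsto st$ is a bijection of $(0,\infty)$ onto itself, I obtain the pointwise identity $M_\varphi^+g(x)=s^{\frac{1+\alpha}{p}}M_\varphi^+f(sx)$ for all $x\in\R$.

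It then remains to insert this into the weighted $L^p$ norm and perform the change of variables $u=sx$, under which $|x|^\alpha=s^{-\alpha}|u|^\alpha$ and $dx=s^{-1}\,du$:
\[
\|g\|_{H^p_{|\cdot|^\alpha}(\R)}^p=\int_\R\big(M_\varphi^+g(x)\big)^p|x|^\alpha\,dx=s^{1+\alpha}\int_\R\big(M_\varphi^+f(sx)\big)^p|x|^\alpha\,dx=s^{1+\alpha}\,s^{-\alpha}\,s^{-1}\int_\R\big(M_\varphi^+f(u)\big)^p|u|^\alpha\,du,
\]
and the three powers of $s$ cancel to give exactly $\|f\|_{H^p_{|\cdot|^\alpha}(\R)}^p$; raising to the power $1/p$ finishes the proof. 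There is no genuine obstacle here: the only points requiring a word of care are the reindexing of the supremum defining $M_\varphi^+$, which is immediate since dilation is an automorphism of the multiplicative group $(0,\infty)$, and the observation that the exponent $\frac{1+\alpha}{p}$ in the normalization is precisely the one forced by matching the Jacobian factor $s^{-1}$ and the weight factor $s^{-\alpha}$ produced by the change of variables.
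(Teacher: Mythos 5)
Your proposal is correct and follows essentially the same route as the paper's own proof: both compute $\varphi_t * \bigl(s^{(1+\alpha)/p} f(s\cdot)\bigr)(x) = s^{(1+\alpha)/p}(\varphi_{st}*f)(sx)$, reindex the supremum via the bijection $t\mapsto st$ of $(0,\infty)$, and then cancel the factors $s^{1+\alpha}$, $s^{-\alpha}$, $s^{-1}$ after the change of variables $u=sx$ in the weighted integral. No discrepancies to report.
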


\begin{proof}
    Lemma \ref{t3.1} is from \cite[Lemma 3.1]{2017Ruan} without proof. For the completeness, we give its proof. By the definition of \(H^p_{|\cdot|^\alpha}(\R)\) and substituting $st=\widetilde{t}$, $sx=\widetilde{x}$, we have
    $$
    \begin{aligned}
    \left\|s^{\frac{1+\alpha}{p}}f(s\cdot)\right\|_{H^p_{|\cdot|^\alpha}}
    &=\left[\int_{\R} \left( \sup_{0<\epsilon<\infty  }  \left|  \int_{\R} s^\frac{1+\alpha}{p}\varphi_\epsilon(x-t)f(st) dt\right|\right)^p |x|^\alpha dx\right]^\frac{1}{p}\\
    &=\left[ \int_{\R} \left( \sup_{0<\epsilon<\infty} \left| \int_{\R} s^{\frac{1+\alpha}{p}} \varphi_\epsilon(x-s^{-1}t)f(t)s^{-1} dt \right| \right)^p |x|^\alpha dx \right]^\frac{1}{p}\\
	&=\left[ \int_{\R} \left( \sup_{0<\epsilon<\infty} \left| \int_{\R} s^{\frac{1+\alpha}{p}} (s\epsilon)^{-1} \varphi\left(\frac{sx-t}{s\epsilon}\right)f(t) dt \right| \right)^p |x|^\alpha dx \right]^\frac{1}{p}
	\\
	&=\left[ \int_{\R} \left( \sup_{0<\epsilon<\infty} \left| \int_{\R} s^{\frac{1+\alpha}{p}} \varphi_{s\epsilon}(sx-t)f(t) dt \right| \right)^p |x|^\alpha dx \right]^\frac{1}{p}
	\\
	&=\left[ \int_{\R} \left( \sup_{0<\epsilon<\infty} \left| \int_{\R}  \varphi_{s\epsilon}(x-t)f(t) dt \right| \right)^p |x|^\alpha dx \right]^\frac{1}{p}
	\\
    &=\|f\|_{H^p_{|\cdot|^\alpha}(\R)}.
    \end{aligned}
    $$
\end{proof}

\begin{lemma}\label{t3.2}\cite[Lemma 3.2]{2017Ruan}
 Let $0<p\leq 1 \leq r<\infty$ and $w(x)=|x|^\alpha$ with $0 \leq \alpha<\infty$. Assume $m:=\frac{1+\alpha}{p}-1$ is an integer, $f \in C^m\left(\mathbb{R}^n\right)$ and $f^{(m)} \in L^{\infty}\left(\mathbb{R}\right)$. Then for any $1+\alpha<r<\infty$ if $\alpha>0$ or $1 \leq r<\infty$ if $\alpha=0$, we have $f \in \widetilde{\mathcal{L}}_w^{p, r}\left(\mathbb{R}\right)$ and
$$
\|f\|_{\widetilde{\mathcal{L}}_w^{p, r}(\R)} \lesssim \| f^{(m)} \|_{L^{\infty}(\R)} .
$$
\end{lemma}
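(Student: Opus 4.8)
The plan is to bound $\|f\|_{\widetilde{\mathcal{L}}_w^{p,r}(\R)}$ directly from its definition: fix an arbitrary ball $B=B(x_0,R)$ and show that the associated normalized $r'$-average is controlled by $\|f^{(m)}\|_{L^\infty(\R)}$ with a constant independent of $x_0$ and $R$. First I would estimate the oscillation $f-\widetilde P_{B,f}$. Since $f\in C^m(\R)$ and $\widetilde P_{B,f}$ is the degree-$m$ Taylor polynomial of $f$ at the centre $x_0$ of $B$, Taylor's formula with Lagrange remainder gives, for $x\in B$,
\[
f(x)-\widetilde P_{B,f}(x)=\frac{f^{(m)}(\xi)-f^{(m)}(x_0)}{m!}\,(x-x_0)^m
\]
for some $\xi$ between $x_0$ and $x$, whence $|f(x)-\widetilde P_{B,f}(x)|\le \frac{2}{m!}\|f^{(m)}\|_{L^\infty(\R)}\,|x-x_0|^m\le \frac{2}{m!}\|f^{(m)}\|_{L^\infty(\R)}\,R^m$. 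The degenerate case $m=0$ forces $\alpha=0$, $p=1$ and is handled at once by $|f(x)-f(x_0)|\le 2\|f\|_{L^\infty(\R)}$.

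Next I would record two elementary one-dimensional weight estimates for $w(x)=|x|^\alpha$, namely
\[
w\big(B(x_0,R)\big)\sim R\,(|x_0|+R)^{\alpha}\qquad\text{and}\qquad \int_{B(x_0,R)}|x|^{\alpha(1-r')}\,dx\sim R\,(|x_0|+R)^{\alpha(1-r')},
\]
with implicit constants depending only on $\alpha$ and $r$. The first is a routine split into $|x_0|\le R$ and $|x_0|>R$. The second is valid precisely because $\alpha(1-r')>-1$, i.e.\ $r>1+\alpha$ (which holds for every $r\ge1$ when $\alpha=0$): this guarantees local integrability of $|x|^{\alpha(1-r')}$ near the origin, and is the only place the hypothesis $r>1+\alpha$ is used.

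Inserting the pointwise bound and these two estimates into the definition, the contribution of $B$ to the supremum is
\[
\frac{1}{w(B)^{\frac1p-1}}\left(\frac{1}{w(B)}\int_B\left|\frac{f-\widetilde P_{B,f}}{w}\right|^{r'}w\,dx\right)^{\frac1{r'}}\lesssim \|f^{(m)}\|_{L^\infty(\R)}\,R^m\,w(B)^{1-\frac1p-\frac1{r'}}\left(\int_B|x|^{\alpha(1-r')}\,dx\right)^{\frac1{r'}}.
\]
Since $\frac1p\ge1$, the exponent $1-\frac1p-\frac1{r'}$ is $\le0$, so it is the \emph{lower} bound for $w(B)$ and the \emph{upper} bound for $\int_B|x|^{\alpha(1-r')}\,dx$ that enter; substituting the displayed equivalences I would then merely collect the powers of $R$ and of $|x_0|+R$.

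The exponent of $R$ comes out to be $m+1-\frac1p$, which vanishes exactly because $m=\frac{1+\alpha}{p}-1$, while the exponent of $(|x_0|+R)$ is $\alpha\big(1-\frac1p-\frac1{r'}\big)+\frac{\alpha(1-r')}{r'}=-\frac{\alpha}{p}\le0$. Hence the contribution of $B$ is dominated by $\|f^{(m)}\|_{L^\infty(\R)}\big(R/(|x_0|+R)\big)^{\alpha/p}\le\|f^{(m)}\|_{L^\infty(\R)}$, uniformly in $B$; taking the supremum gives $f\in\widetilde{\mathcal{L}}_w^{p,r}(\R)$ and the claimed estimate. I expect the only genuinely delicate point to be the behaviour near the origin, where $|x|^\alpha$ degenerates: one must verify the integrability that makes the second weight estimate valid (this is where $r>1+\alpha$ enters) and then check that after the homogeneity bookkeeping every power of $R$ cancels thanks to the relation $m=\frac{1+\alpha}{p}-1$; everything else is routine.
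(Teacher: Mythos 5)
The paper does not prove this lemma; it is quoted verbatim from Ruan--Fan \cite[Lemma 3.2]{2017Ruan}, so there is no in-paper argument to compare against. Your proof is the natural one and is essentially correct: the Taylor--Lagrange bound $|f(x)-\widetilde P_{B,f}(x)|\lesssim \|f^{(m)}\|_{L^\infty}R^m$, the two weight equivalences $w(B)\sim R(|x_0|+R)^\alpha$ and $\int_B|x|^{\alpha(1-r')}dx\sim R(|x_0|+R)^{\alpha(1-r')}$ (the latter needing exactly $\alpha(1-r')>-1$, i.e.\ $r>1+\alpha$), and the observation that the negative exponent $1-\frac1p-\frac1{r'}$ forces you to use the \emph{lower} bound for $w(B)$, are all correct and assemble into the claimed uniform estimate.

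One point in your bookkeeping is misstated, though the final bound you display is right: the exponent of $R$ is $m+1-\frac1p=\frac{1+\alpha}{p}-\frac1p=\frac{\alpha}{p}$, which does \emph{not} vanish unless $\alpha=0$. It is precisely this residual $R^{\alpha/p}$, paired with the exponent $-\frac{\alpha}{p}$ of $(|x_0|+R)$, that produces the bounded factor $\bigl(R/(|x_0|+R)\bigr)^{\alpha/p}\le 1$ you write at the end; if the $R$-exponent really were zero you would be left with $(|x_0|+R)^{-\alpha/p}$, which is not uniformly bounded, and the argument would fail. So correct the sentence ``which vanishes'' to ``which equals $\alpha/p$'' (and likewise the closing remark that ``every power of $R$ cancels''). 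You may also want one line for $r=1$ (hence $\alpha=0$, $r'=\infty$), where the inner average is an essential supremum and the estimate is immediate from the pointwise Taylor bound.
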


\begin{lemma}\label{t3.3}\cite[Lemma 3.2]{2016Chen}
    Let \(0\leq \beta <1\). If \(\Phi\),µ \(\widehat{\Phi}\in L^1(\R)\), then for any \(f\in \mathscr{S}(\R)\),
    \[
    Hh_{\Phi,\beta}(f)=h_{H\Phi,\beta}(f),
    \]
where \(H\) is the Hilbert transform.
\end{lemma}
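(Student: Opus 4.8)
Proof proposal for Lemma \ref{t3.3} (the identity $Hh_{\Phi,\beta}(f)=h_{H\Phi,\beta}(f)$ for $f\in\mathscr{S}(\R)$):

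The plan is to verify the identity on the Fourier side, since $\Phi,\widehat\Phi\in L^1(\R)$ give us enough integrability to manipulate freely, and $f\in\mathscr{S}(\R)$ makes all integrals absolutely convergent. First I would record the dilation behaviour of the Fourier transform under the fractional Hausdorff operator: writing $h_{\Phi,\beta}(f)(x)=\int_{\R}\frac{\Phi(y^{-1}x)}{|y|^{1-\beta}}f(y)\,dy$ and substituting $x=yt$ inside the $y$-integral (or, more carefully, splitting the $y$-integration into $\R_+$ and $\R_-$ and using the change of variables $t=y^{-1}x$), one expresses $h_{\Phi,\beta}(f)$ as a superposition of dilates of $f$. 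Concretely, one obtains $\widehat{h_{\Phi,\beta}(f)}(\xi)=\int_{\R}\Phi(y^{-1}x)\cdots$ transformed into an expression of the form $\int_{\R}\Phi(t)|t|^{\beta-1}\,\widehat{f}(t\xi)\,dt$ up to the precise constants coming from $|y|^{1-\beta}$ and the sign splitting; the key point is that the kernel variable $t$ only rescales the frequency $\xi$.

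Next I would apply the Hilbert transform multiplier identity \eqref{eq2.6}, namely $\widehat{Hg}(\xi)=(-i\,\mathrm{sgn}\,\xi)\widehat{g}(\xi)$, to $g=h_{\Phi,\beta}(f)$. Since $\widehat{f}(t\xi)$ appears, the relevant sign is $\mathrm{sgn}(\xi)$; and because $\widehat{f}(t\xi)$ already carries a factor that is a function of $t\xi$, I would pull the $(-i\,\mathrm{sgn}\,\xi)$ inside and rewrite it, using $\mathrm{sgn}(\xi)=\mathrm{sgn}(t)\,\mathrm{sgn}(t\xi)$ for $t\neq 0$, so that $(-i\,\mathrm{sgn}\,\xi)\widehat f(t\xi)=\mathrm{sgn}(t)\,(-i\,\mathrm{sgn}(t\xi))\widehat f(t\xi)=\mathrm{sgn}(t)\,\widehat{Hf}(t\xi)$. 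Hmm — but one actually wants $H$ to land on $\Phi$, not on $f$; the cleaner route is to keep the representation with the kernel acting as a multiplier in the frequency variable and observe that the factor $-i\,\mathrm{sgn}\,\xi$ can be absorbed into the kernel: since $\widehat{H\Phi}(\eta)=(-i\,\mathrm{sgn}\,\eta)\widehat\Phi(\eta)$ and the Hausdorff structure converts multiplication by $\widehat\Phi$ in an auxiliary variable into the operator $h_{\Phi,\beta}$, matching the multiplier $-i\,\mathrm{sgn}\,\xi$ on $\widehat{h_{\Phi,\beta}(f)}$ against the corresponding multiplier on $\widehat{h_{H\Phi,\beta}(f)}$ yields equality. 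Then Fourier inversion (legitimate since both sides are Schwartz, or at least have integrable Fourier transforms) gives the pointwise identity.

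The main obstacle I expect is bookkeeping the sign/absolute-value factors correctly: because we are on $\R$ rather than $\R_+$, the operator involves $|y|$ and $|y|^{-1}x$, so the change of variables must be done separately on $\{y>0\}$ and $\{y<0\}$, producing four terms, and one must check that the Hilbert-transform multiplier $-i\,\mathrm{sgn}$ interacts consistently with each. A secondary technical point is justifying the interchange of the $\R$-integral defining $h_{\Phi,\beta}$ with the Fourier transform and with the principal-value integral defining $H$; this is where $\Phi,\widehat\Phi\in L^1$ and $f\in\mathscr{S}(\R)$ are used, via Fubini together with the fact that $H$ is bounded on $L^2$ and commutes with the (absolutely convergent) superposition of dilations. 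Alternatively — and this may be the shortest write-up — one can avoid the Fourier side entirely: write $h_{\Phi,\beta}(f)$ as an average $\int_\R \Phi(t)|t|^{\beta-1}(D_tf)(x)\,dt$ of dilates $D_tf(x):=f$ rescaled, use that $H$ commutes with dilations up to the obvious scalar, and then recognize the resulting average $\int_\R (H\Phi)(t)|t|^{\beta-1}(D_t f)(x)\,dt$ as $h_{H\Phi,\beta}(f)(x)$; the commutation $H D_t = D_t H$ (with the sign of $t$ tracked) is the whole content, and dominated convergence handles the principal value.
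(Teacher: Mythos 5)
The paper does not prove this lemma at all: it is quoted verbatim from \cite[Lemma 3.2]{2016Chen}, so there is no in-paper argument to compare against. Judged on its own, your plan is sound and would yield a complete proof, but you make it harder than it needs to be. The cleanest execution of your own idea is: for fixed $y$, the function $x\mapsto \Phi(|y|^{-1}x)$ is a dilate of $\Phi$ by the \emph{positive} factor $|y|^{-1}$, and $H$ commutes exactly with positive dilations (equivalently, on the Fourier side, $\widehat{h_{\Phi,\beta}(f)}(\xi)=\int_{\R}|y|^{\beta}f(y)\,\widehat{\Phi}(|y|\xi)\,dy$ by Fubini, and $\operatorname{sgn}(\xi)=\operatorname{sgn}(|y|\xi)$ since $|y|>0$, so $(-i\operatorname{sgn}\xi)\widehat{\Phi}(|y|\xi)=\widehat{H\Phi}(|y|\xi)$ termwise). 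Consequently the four-term sign bookkeeping you anticipate, and the identity $\operatorname{sgn}(\xi)=\operatorname{sgn}(t)\operatorname{sgn}(t\xi)$, are unnecessary: the definition uses $|y|^{-1}x$, not $y^{-1}x$, so no negative dilations ever occur and no sign correction arises. Note also that the dilation lands naturally on $\widehat{\Phi}$ rather than on $\widehat{f}$ as in your first displayed guess; the version with $\widehat{\Phi}(|y|\xi)$ is the one where the multiplier $-i\operatorname{sgn}\xi$ transfers directly onto $\Phi$. The hypotheses are used exactly where you say: $\Phi\in L^1$ and $f\in\mathscr{S}$ justify Fubini, while $\widehat{\Phi}\in L^1$ guarantees that $\widehat{H\Phi}=-i\operatorname{sgn}(\cdot)\widehat{\Phi}\in L^1$ (so $H\Phi$ is a bounded continuous function and $h_{H\Phi,\beta}(f)$ is well defined) and that $\widehat{h_{\Phi,\beta}(f)}\in L^1$ (using $0\le\beta<1$ so that $|y|^{\beta-1}|f(y)|$ is integrable), which legitimizes the final Fourier inversion.
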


\begin{proof}[Proof of Theorem \ref{t1.2}]
    By Lemma \ref{t3.1}, we get
    \begin{align*}
        \|f\|_{H^p_{|\cdot|^\alpha(\R)}}&=\|s^{\frac{1+\alpha}{p}}f(s\cdot)\|_{H^p_{|\cdot|^\alpha}(\R)}\\
        &\gtrsim \left\| s^{\frac{1+\alpha}{p}} \int_{\R} \frac{\Phi(|y|^{-1}\cdot)}{|y|^{1-\beta}} f(sy) dy \right\|_{H^q_{|\cdot|^\gamma}(\R)}\\
        &=  \left\| s^{\frac{1+\alpha}{p}} \int_{\R} \frac{\Phi(|u|^{-1}s\cdot)}{|s^{-1}u|^{1-\beta}} f(u)  
        s^{-1} du\right\|_{H^q_{|\cdot|^\gamma}(\R)}\\
        &=s^{\frac{1+\alpha}{p}-\frac{1+\gamma}{q}-\beta}\left\|s^{\frac{1+\gamma}{q}}h_{\Phi,\beta}(f)(s\cdot)\right\|_{H^q_{|\cdot|^\gamma}(\R)}\\
        &=s^{\frac{1+\alpha}{p}-\frac{1+\gamma}{q}-\beta}\|h_{\Phi,\beta}(f)\|_{H^q_{|\cdot|^\gamma}(\R)}.
    \end{align*}
    Since the inequality holds uniformly on \(0<s<\infty\), we have
    \[
    \frac{1+\alpha}{p}-\frac{1+\gamma}{q}=\beta.
    \]
\end{proof}


\begin{proof}[Proof of Theorem \ref{t1.6}]
Suppose the following condition holds for the moment:
\begin{equation}\label{eq3.17}
\sup _{0<s<\infty}\left\|\varphi_s*\Phi_{|y|,\beta}(x)\right\|_{\mathcal{L}_{|\cdot|^\alpha}^{p,r}(\R)} \lesssim |x|^{\beta-\frac{1+\alpha}{p}}.
\end{equation}
where $\Phi_{|y|,\beta}(x):=\frac{\Phi(|y|^{-1}x)}{|y|^{1-\beta}}$. By Proposition \ref{rem2.1} (i) and $(\ref{eq3.17})$, we obtain
    \begin{align}
        |(\varphi_s*h_{\Phi,\beta}(f))(x)|
        &=\left| \int_{\R}\varphi_s*\Phi_{|y|,\beta}(x) f(y) dy \right|\nonumber\\
        &\lesssim \left\| \int_{\R}\varphi_s*\Phi_{|y|,\beta}(x) dt \right\|_{\mathcal{L}^{p,r}_{|\cdot|^\alpha}(\R)} \|f\|_{H^p_{|\cdot|^\alpha}(\R)}\nonumber\\
		&\lesssim \|f\|_{H^p_{|\cdot|^\alpha}(\R)} |x|^{\beta-\frac{1+\alpha}{p}}.\label{eq3.23}
    \end{align}
    Denote \(w(x):=|x|^\gamma\). For any \(\lambda>0\) , by $(\ref{eq3.23})$ and $\frac{1+\alpha}{p}-\frac{1+\gamma}{q}=\beta$, we obtain
    \begin{align}
        w\left( \left\{ x\in\R:\sup_{0<s<\infty} |(\varphi_s*h_{\Phi,\beta}(f)(x)|>\lambda \right\} \right)
        &\lesssim w\left(\left\{x\in\R: \|f\|_{H^p_{|\cdot|^\alpha}(\R)}|x|^{\beta-\frac{1+\alpha}{p}}>\lambda\right\}\right)\nonumber\\
        &=w\left(\left\{ x\in\R: |x|< \left(\frac{\|f\|_{H^p_{|\cdot|^\alpha}}}{\lambda}\right)^{\frac{q}{1+\gamma}} \right\}\right)\nonumber\\
        &\lesssim \left(\frac{\|f\|_{H^p_{|\cdot|^\alpha}(\R)}}{\lambda}\right)^q,\label{eq3.43}
    \end{align}
    which implies that
	\begin{equation}\label{eq3.15}
	\|h_{\Phi,\beta}(f)\|_{H^{q,\infty}_{|\cdot|^\gamma}(\R)}\lesssim \|f\|_{H_{|\cdot|^\alpha}^p(\R)}.
	\end{equation}
     Thus, to end the proof, it leaves to verify $(\ref{eq3.17})$.

	 From Proposition \ref{rem2.1} (ii) , Lemma \ref{t3.2} and $m=\frac{1+\alpha}{p}-1$, it suffices to verify 
    \begin{equation}\label{eq3.41}
    \sup_{0<s<\infty}\sup_{|y|>0}\left||x|^{-\beta}\frac{d^m}{dy^m} \int_{\R}\varphi_s(x-t)\frac{\Phi(|y|^{-1}t)}{|y|^{1-\beta}}dt\right|\lesssim |x|^{-m-1}.
    \end{equation}
    Without loss of generality, we assume that \(y > 0\). By Proposition \ref{pro2.5} with $|x|^\alpha\in A_\infty$, we choose $\Psi \in \mathscr{S}\left(\mathbb{R}\right)$ such that its support is in the unit ball $B(0,1)$ with $\Psi(0) \neq 0$ and let $\varphi$ be the inverse Fourier transform of $\Psi$, which implies that $\int_{\R} \varphi(x) dx\neq0$. By properties of Fourier Transform and $\widehat{\Psi}=\varphi$, we obtain
    \begin{equation}\label{eq3.40}
    \int_{\R} \varphi_s(x-t) \frac{\Phi(y^{-1}t)}{y^{1-\beta}} dt
	=y^\beta\varphi_s*\Phi_y(x)=y^\beta\int_{\R} \widehat{\varphi_s}(\xi) \widehat{\Phi_y}(\xi) e^{2\pi ix\xi} d\xi =\int_{\R} \Psi(s\xi) \widehat{\Phi}(y\xi) e^{2\pi ix\xi} d\xi. 
    \end{equation}
    Thus, by $(\ref{eq3.41})$ and $(\ref{eq3.40})$, it suffices to verify
    \[
    \sup_{0<s<\infty}\sup_{y>0}\left||x|^{-\beta}\frac{d^m}{dy^m} y^{\beta}\int_{\R}\widehat{\Phi}(y\xi)\Psi(s\xi) e^{2\pi ix\xi} d\xi\right|\lesssim |x|^{-m-1}.
    \]

    For any \(y>0\) and \(0<s<\infty\), if \(|x|\leq y\), by Leibniz's rule, $\Psi\in\mathscr{S}(\R)$, substituting $y\xi=\tilde{\xi}$, $-1<\beta-1<0$, $|x|\leq y$ and $(\ref{eq3.14})$, we have
    \begin{align}\label{eq3.5}
        &\left||x|^{-\beta} \frac{d^m y^\beta}{dy^m}  \int_{\R} \widehat{\Phi}(y\xi) \Psi(s\xi) e^{2\pi ix \xi} d\xi \right|\nonumber\\
        =&|x|^{-\beta}\left| \sum_{n=0}^m \binom{m}{n}\frac{d^{m-n}y^\beta}{dy^{m-n}} \int_{\R} \widehat{\Phi}^{(n)}(y\xi)\xi^n \Psi(s\xi) e^{2\pi ix\xi} d\xi \right|\\
        \lesssim& \left(\frac{y}{|x|}\right)^{\beta-1} |x|^{-1} \sum_{n=0}^m y^{1-m+n} \int_{\R} |\widehat{\Phi}^{(n)}(y\xi)\xi^n| d\xi\nonumber\\
        =&\left(\frac{y}{|x|}\right)^{\beta-1} |x|^{-1} \sum_{n=0}^m y^{-m} \int_{\R} |\widehat{\Phi}^{(n)}(\xi)\xi^n| d\xi\nonumber\\
        \lesssim& |x|^{-1-m}\nonumber .
    \end{align}

    If \(|x|>y\). By integration by parts \(m+1\) times on \(\xi\) in equation \((\ref{eq3.5})\), Leibniz's rule, $\supp \Psi\subset B(0,1)$, $\Psi\in\mathscr{S}(\R)$, $0<\beta<1$, $|x|>y$ and $(\ref{eq3.14})$, we obtain
    \[
    \begin{aligned}
        &\left| \frac{d^m}{dy^m} \frac{y^\beta}{|x|^\beta} \int_{\R} \widehat{\Phi}(y\xi) \Psi(s\xi) e^{2\pi ix\xi} d\xi \right|
		\\
		=&|x|^{-\beta}\frac{1}{|2\pi x|^{m+1}} \left|\sum_{n=0}^m \binom{m}{n}\frac{d^{m-n}y^\beta}{dy^{m-n}}\int_{\R} \frac{d^{m+1}}{d\xi^{m+1}} \left[\widehat{\Phi}^{(n)}(y\xi) \xi^n \Psi(s\xi)\right] e^{2\pi ix\xi} d\xi\right|
		\\
		\lesssim&  \frac{1}{|x|^{m+1+\beta}} \sum_{n=0}^m y^{\beta-m+n} \int_{\R} \left| \sum_{k=0}^n \binom{m+1}{k} \sum_{l=0}^{m+1-k}\binom{m+1-k}{l} \frac{d^{m+1-k-l}\Psi(s\xi)}{d\xi^{m+1-k-l}} \frac{d^l\widehat{\Phi}^{(n)}(y\xi)}{d\xi^l} \frac{d^k\xi^n}{d\xi^k} \right| d\xi
		\\
		\lesssim& \frac{1}{|x|^{m+1}} \sum_{n=0}^m y^{-m+n} \sum_{k=0}^n \sum_{l=0}^{m+1-k} \int_{\R} \left| \Psi^{(m+1-k-l)}(s\xi) s^{m+1-k-l} \widehat{\Phi}^{(n+l)}(y\xi) y^l \xi^{n-k} \right| d\xi
		\\
		=& \frac{1}{|x|^{m+1}} \sum_{n=0}^m y^{-m+n} \sum_{k=0}^n \sum_{l=0}^{m+1-k} \int_{|\xi|<s^{-1}} \left| \Psi^{(m+1-k-l)}(s\xi) \frac{(s\xi)^{m+1-k-l}}{ \xi^{m+1-k-l}} \widehat{\Phi}^{(n+l)}(y\xi) y^l \xi^{n-k} \right| d\xi
		\\ 
		\leq& \frac{1}{|x|^{m+1}} \sum_{n=0}^m y^{-m+n} \sum_{k=0}^n \sum_{l=0}^{m+1-k} \int_{\R} \left| \widehat{\Phi}^{(n+l)}(y\xi) y^l \xi^{-m-1+l+n} \right| d\xi
		\\
		=&\frac{1}{|x|^{m+1}} \sum_{n=0}^m \sum_{k=0}^n \sum_{l=0}^{m+1-k} \int_{\R} \left| \widehat{\Phi}^{(n+l)}(\xi) \xi^{n+l-m-1} \right| d\xi
		\\
		\lesssim& |x|^{-m-1}.
    \end{aligned}
    \]
\end{proof}

Now we introduce the Marcinkiewicz interpolation theorem on weighted Hardy spaces.

\begin{lemma}\label{t3.7}\cite[Theorem 1]{Krotov2023}
Let $0<p_0 \neq p_1 \leq \infty$ and $0<q_0 \neq q_1 \leq \infty$, $u,v\in A_\infty$ and let a quasilinear operator $T:H^{p_0}_u(\R)+H^{p_1}_u(\R)\to L^0_v(\R)$
satisfy the following conditions: there exist positive constants $M_0$ and $M_1$ such that the inequalities
$$
\nu\{x\in\R:|T f|>\lambda\} \leq\left(\frac{M_i}{\lambda}\|f\|_{H^{p_i}_u (\mathbf{\R})}\right)^{q_i}, \quad f \in H^{p_i}_v (\R), \quad i=0,1
$$
hold for every $\lambda>0$.
Let $\theta \in(0,1)$ and let $p,q\in (0,\infty)$ satisfying
$$
\frac{1}{p}=\frac{1-\theta}{p_0}+\frac{\theta}{p_1}, \quad \frac{1}{q}=\frac{1-\theta}{q_0}+\frac{\theta}{q_1},
$$
and $p\leq q$. Then there exists a constant $C=C(p_0,p_1,q_0,q_1,\theta)$ such that the inequality
$$
\|T f\|_{L^q_v(\R)} \leq C M_0^{1-\theta} M_1^\theta\|f\|_{H^p_u(\R)}
$$
holds for all functions $u \in H^p_u(\R)$.
\end{lemma}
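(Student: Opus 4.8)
The plan is to adapt the classical Marcinkiewicz interpolation scheme to the weighted Hardy setting, replacing the Calder\'on--Zygmund decomposition of an $L^p$-function by the Fefferman--Stein type decomposition of an $H^p_u$-distribution performed at a height that depends on the level of the distribution function of $Tf$.

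By homogeneity of the asserted inequality it suffices to treat $f\in H^p_u(\R)$ with $\|f\|_{H^p_u(\R)}=1$, and, since the statement is symmetric under interchanging the indices $0$ and $1$ (with $\theta$ replaced by $1-\theta$), we may assume $p_0<p_1$, hence $p_0<p<p_1$. Because $u\in A_\infty$, the quasi-norms of $H^{p}_u(\R)$, $H^{p_0}_u(\R)$ and $H^{p_1}_u(\R)$ defined by the radial, the nontangential and the grand maximal functions are all mutually equivalent; write $\mathcal{M}f$ for the grand maximal function, so that $\|f\|_{H^p_u(\R)}\approx\|\mathcal{M}f\|_{L^p_u(\R)}$, and likewise at the exponents $p_0,p_1$. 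For each height $\sigma>0$, a Whitney covering of the open set $\{\mathcal{M}f>\sigma\}$, a subordinate smooth partition of unity, and polynomial corrections annihilating moments up to the order fixed by $p_0$ produce a decomposition $f=g_\sigma+b_\sigma$; the weighted bounds to be established are
\begin{equation*}
\|b_\sigma\|_{H^{p_0}_u(\R)}^{p_0}\lesssim\int_{\{\mathcal{M}f>\sigma\}}(\mathcal{M}f)^{p_0}\,u\,dx,\qquad
\|g_\sigma\|_{H^{p_1}_u(\R)}^{p_1}\lesssim\int_{\R}\big(\min\{\mathcal{M}f,\sigma\}\big)^{p_1}\,u\,dx .
\end{equation*}
In particular $H^p_u(\R)\subset H^{p_0}_u(\R)+H^{p_1}_u(\R)$, so $Tf$ is well defined. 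The properties of $u\in A_\infty$ (doubling together with a reverse H\"older inequality) are exactly what allow one to pass from the classical per-cube estimates to these global weighted ones and to sum them.

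With the decomposition in hand I would run the Marcinkiewicz bookkeeping. Fix $\lambda>0$ and a height $\sigma=\sigma(\lambda)$ to be chosen. Quasilinearity of $T$, say with constant $\kappa$, together with the two weak-type hypotheses gives
\begin{equation*}
\nu\{x\in\R:|Tf(x)|>\lambda\}\le\nu\{|Tg_\sigma|>\tfrac{\lambda}{2\kappa}\}+\nu\{|Tb_\sigma|>\tfrac{\lambda}{2\kappa}\}\lesssim\Big(\tfrac{M_1}{\lambda}\|g_\sigma\|_{H^{p_1}_u(\R)}\Big)^{q_1}+\Big(\tfrac{M_0}{\lambda}\|b_\sigma\|_{H^{p_0}_u(\R)}\Big)^{q_0}.
\end{equation*}
I would then choose $\sigma(\lambda)$ so that $(\lambda,\sigma(\lambda))$ traces the interpolation line prescribed by $\tfrac{1}{p}=\tfrac{1-\theta}{p_0}+\tfrac{\theta}{p_1}$ and $\tfrac{1}{q}=\tfrac{1-\theta}{q_0}+\tfrac{\theta}{q_1}$ (this is the choice that balances the two contributions), insert the displayed bounds on $\|g_\sigma\|_{H^{p_1}_u(\R)}$ and $\|b_\sigma\|_{H^{p_0}_u(\R)}$, and compute
\begin{equation*}
\|Tf\|_{L^q_v(\R)}^q=q\int_0^\infty\lambda^{q-1}\,\nu\{|Tf|>\lambda\}\,d\lambda .
\end{equation*}
Interchanging the order of integration and changing variables between $\lambda$ and $\sigma$ turns each of the two double integrals into a single integral in $\mathcal{M}f$ against $u\,dx$; the hypothesis $p\le q$ enters precisely here, to guarantee that the auxiliary power integrals in $\lambda$ converge. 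Tracking the constants yields $\|Tf\|_{L^q_v(\R)}\lesssim M_0^{1-\theta}M_1^{\theta}\|\mathcal{M}f\|_{L^p_u(\R)}\approx M_0^{1-\theta}M_1^{\theta}\|f\|_{H^p_u(\R)}$, which is the claim.

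The step I expect to be the real obstacle is the weighted Fefferman--Stein decomposition with the two quasi-norm bounds above: one must check that the grand maximal function of the good part $g_\sigma$ is pointwise dominated by $\min\{\mathcal{M}f,\sigma\}$ up to tails harmlessly concentrated off $\{\mathcal{M}f>\sigma\}$, and that the bad part, a sum of infinitely many moment-corrected bumps each supported near a Whitney cube, has $H^{p_0}_u$ quasi-norm controlled by summing the per-cube estimates against $u\,dx$ --- it is at this summation that the $A_\infty$ condition (doubling and reverse H\"older) is indispensable. Once this is secured, what remains is the routine layer-cake computation of the Marcinkiewicz theorem; beyond the decomposition, the hypotheses $u,v\in A_\infty$ serve only to ensure that the maximal-function equivalences and the distribution-function identities behave as in the unweighted case.
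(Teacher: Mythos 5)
The paper offers no proof of this statement: Lemma \ref{t3.7} is imported verbatim from Krotov \cite{Krotov2023} and used as a black box, so there is no in-paper argument to compare yours against. Judged on its own terms, your outline follows the standard route for Marcinkiewicz interpolation on Hardy spaces (Calder\'on--Zygmund decomposition of an $H^p$ distribution at height $\sigma$ via the grand maximal function, Whitney cubes and moment-corrected bumps, then the layer-cake bookkeeping with $\sigma=\sigma(\lambda)$ chosen on the interpolation line), and this is indeed the architecture of the known proofs.

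The issue is that, as written, the proposal is a reduction rather than a proof: the two displayed quasi-norm bounds for $b_\sigma$ and $g_\sigma$ --- which you yourself flag as ``the real obstacle'' --- carry essentially all of the content of the theorem, and you do not establish them. In the weighted setting they are a genuine theorem (the $A_\infty$ Calder\'on--Zygmund decomposition of Str\"omberg--Torchinsky type), not a routine adaptation: one must verify that the grand maximal function of $g_\sigma$ is dominated by $\min\{\mathcal{M}f,\sigma\}$ plus summable tails, and that the bad bumps sum in $H^{p_0}_u$ quasi-norm, both of which require the doubling and reverse H\"older properties of $u$ in a quantitative way. A second, smaller gap: after inserting the weak-type hypotheses you face integrals of the form $\int_0^\infty \lambda^{q-1-q_i}\bigl(\int_{\{\mathcal{M}f>\sigma(\lambda)\}}(\mathcal{M}f)^{p_i}u\,dx\bigr)^{q_i/p_i}d\lambda$, and the exponent $q_i/p_i$ sitting outside the inner integral is handled by Minkowski's integral inequality or a Hardy-type inequality, not merely by ``convergence of the auxiliary power integrals''; the hypothesis $p\leq q$ enters through that step. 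Since the lemma is a quoted result, citing \cite{Krotov2023} (or Str\"omberg--Torchinsky for the decomposition) at these two points would close the argument; without such a citation the proposal has a real hole exactly where you predicted it would.
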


\begin{proof}[Proof of Theorem \ref{t1.4}]
Since \(m=[\frac{1+\alpha}{p}-1]+1>\frac{1+\alpha}{p}-1\), there is a \(p_1\in (0,p)\) satisfying
\[
m=\frac{1+\alpha}{p_1}-1.
\]
Theorem \ref{t1.6} implies
\begin{equation}\label{eq3.1}
\|h_{\Phi,\beta}(f)\|_{H^{q_1,\infty}_{|\cdot|^\gamma}(\R)}\lesssim \|f\|_{H^{p_1}_{|\cdot|^\alpha}(\R)},
\end{equation}
where
\[
\frac{1+\alpha}{p_1}-\frac{1+\gamma}{q_1}=\beta.
\]
On the other hand, since \(\alpha>0\), we have \(p_0>1+\alpha > 1\). Also, by $p_0>\max\{\frac{1+\alpha}{1+\beta+\gamma},1+\alpha\}$, $\gamma>0$ and $\frac{1+\alpha}{p_0}-\frac{1+\gamma}{q_0}=\beta$, we have \(q_0>1\) and \(q_0>\frac{1+\gamma}{1-\beta}\). Therefore, \(L^{p_0}_{|\cdot|^\alpha}(\R)=H^{p_0}_{|\cdot|^\alpha}(\R)\) and \(L^{q_0}_{|\cdot|^\gamma}(\R)=H^{q_0}_{|\cdot|^\gamma}(\R)\). By Lemma \ref{t3.4}, we obtain
\begin{equation}\label{eq3.2}
\|h_{\Phi,\beta}(f)\|_{H^{q_0}_{|\cdot|^\gamma}(\R)}\lesssim \|f\|_{H_{|\cdot|^\alpha}^{p_0}(\R)}.
\end{equation}
From \(p_1<p\), $\frac{1+\alpha}{p_1}-\frac{1+\gamma}{q_1}=\beta$ and $\frac{1+\alpha}{p}-\frac{1+\gamma}{q}=\beta$, we have 
\[
p_1=\frac{1+\alpha}{\frac{1+\gamma}{q_1}+\beta}<\frac{1+\alpha}{\frac{1+\gamma}{q}+\beta}=p,
\]
which implies 
\begin{equation}\label{eq3.3}
    q_1<q.
\end{equation}
By \(q_0>\frac{1+\gamma}{1-\beta}\) , $\frac{1+\alpha}{p}-\frac{1+\gamma}{q}=\beta$, $0<\beta<1$, \(\alpha>0\) and \(0<p\leq1\), we obtain
\begin{equation}\label{eq3.4}
q_0>\frac{1}{1-\beta}\left(\frac{1+\alpha}{p}-\beta\right)q=\frac{1+\alpha}{1-\beta}\frac{q}{p}-\frac{\beta}{1-\beta}q>q\frac{\alpha}{1-\beta}>q.
\end{equation}
Then, for any \(f\in H^p_{|\cdot|^\alpha}(\R)\), by \((\ref{eq3.1})\), \((\ref{eq3.2})\), \((\ref{eq3.3})\), \((\ref{eq3.4})\), \(0<p_1<p\leq1<p_0\), Lemma \ref{t3.7} with quasilinear operator \(T := M^+_\phi \circ h_{\Phi,\beta}\), $p\leq q$ and \(\theta=(1/p_0 - 1/p)^{-1}(1/p_0 - 1/p_1)\), we obtain 
\[
\|h_{\Phi,\beta}(f)\|_{H^q_{|\cdot|^\gamma}(\R)}\lesssim \|f\|_{H^p_{|\cdot|^\alpha}(\R)}.
\]
\end{proof}

\begin{proof}[Proof of Theorem \ref{t1.7}]
Without loss of generality, assume that \(y>0\). Pick the function $g\in C^{2m+1}: [0,\infty)\to\R$ as in Theorem \ref{t1.7}. A known result is as follows \cite[(3.28)]{2017Ruan}:
    \begin{equation}\label{eq3.33}
    \frac{d^ng(y^2\xi^2)}{d y^n}=
    \begin{cases}
        \sum\limits_{j=\frac{n+1}{2}}^n g^{(j)}(\xi^2 y^2)\xi^{2j}Q_{2j-n}(y),&n \text{ is an odd integer},\\
        \sum\limits_{j=\frac{n}{2}}^n g^{(j)}(\xi^2 y^2)\xi^{2j}Q_{2j-n}(y),&n \text{ is an even integer},
    \end{cases}
    \end{equation}
    where \(Q_{2j-n}(y)\) is a polynomial satisfies
    \begin{equation}\label{eq3.34}
    |Q_{2j-n}(y)|\lesssim |y|^{2j-n} .
    \end{equation}

	The outline of the proof is the same with that of Theorem \ref{t1.6}, it suffices to prove
	$$
	\left||x|^{-\beta} \frac{d^m}{d y^m}\left(y^\beta\int_{\R} g(y^2\xi^2)\Psi(s\xi)e^{2\pi ix\xi} d\xi\right)\right|\lesssim |x|^{-m-1}.
	$$
 
    For \(0<s<\infty\) and \(y>0\), if \(|x|\leq y\), by Leibniz's rule, $(\ref{eq3.33})$, $(\ref{eq3.34})$, $\Psi=\widehat{\varphi}\in\mathscr{S}(\R)$, substituting $\xi^2y^2=\tilde{\xi}$, $-1<\beta-1<0$, $|x|\leq y$ and $(\ref{eq3.42})$, we obtain
    \[
    \begin{aligned}
        &\left||x|^{-\beta} \frac{d^m}{d y^m}\left(y^\beta\int_{\R} g(y^2\xi^2)\Psi(s\xi)e^{2\pi ix\xi} d\xi\right)\right|
		\\
        =&\left||x|^{-\beta}\sum_{n=0}^m\binom{m}{n}\frac{d^{m-n}y^\beta}{d y^{m-n}}\int_{\R} \sum_{j=\left[\frac{n+1}{2}\right]}^n g^{(j)}(\xi^2y^2)\xi^{2j}Q_{2j-n}(y) \Psi(s\xi) e^{2\pi i x \xi}  d\xi\right|
		\\
        \lesssim&|x|^{-\beta}\sum_{n=0}^m y^{\beta-m} \sum_{j=\left[\frac{n+1}{2}\right]}^n \int_{\R} |g^{(j)}(\xi^2y^2)\xi^{2j}y^{2j}| d\xi
		\\
		=& |x|^{-\beta}\sum_{n=0}^m y^{\beta-m} \sum_{j=\left[\frac{n+1}{2}\right]}^n \left( \int_0^\infty |g^{(j)}(\xi^2y^2)\xi^{2j}y^{2j}| d\xi+\int_{-\infty}^0 |g^{(j)}(\xi^2y^2)\xi^{2j}y^{2j}| d\xi\right) 
		\\
        =&\left(\frac{y}{|x|}\right)^{\beta-1}|x|^{-1}y^{-m}\sum_{n=0}^m \sum_{j=\left[\frac{n+1}{2}\right]}^n \int_{0}^\infty |g^{(j)}(\xi)\xi^{j-\frac{1}{2}}|d\xi
		\\
        \lesssim& |x|^{-m-1}.
    \end{aligned}
    \]

    For \(0<s<\infty\) and \(y>0\), if \(|x|>y\), by Leibniz's rule, $(\ref{eq3.33})$ and integration by parts, we obtain
    \begin{align}
        &\left||x|^{-\beta} \frac{d^m}{d y^m}\left(y^\beta\int_{\R} g(y^2\xi^2)\Psi(s\xi)e^{2\pi ix\xi} d\xi\right)\right|\nonumber
		\\
		=& |x|^{-\beta} \left| \sum_{n=0}^m\binom{m}{n} \frac{d^{m-n}y^\beta}{dy^{m-n}} \int_{\R} \frac{d^n g(y^2\xi^2)}{d y^n} \Psi(s\xi) e^{2\pi ix\xi} d\xi \right|\nonumber
		\\
		=& |x|^{-\beta} \frac{1}{|2\pi x|^{m+1}} \left| \sum_{n=0}^m\binom{m}{n} \frac{d^{m-n} y^\beta}{d y^{m-n}} \int_{\R} \frac{d^{m+1}}{d\xi^{m+1}} \left[\sum_{j=\left[\frac{n+1}{2}\right]}^ng^{(j)}(\xi^2y^2)\xi^{2j}\Psi(s\xi)\right] Q_{2j-n}(y) e^{2\pi ix\xi} d\xi \right|\nonumber
		\\
		=& |x|^{-\beta} \frac{1}{|2\pi x|^{m+1}} \left| \sum_{n=0}^m\binom{m}{n} \frac{d^{m-n} y^\beta}{d y^{m-n}} \int_{\R} \sum_{j=\left[\frac{n+1}{2}\right]}^n\sum_{k=0}^{\min\{2j,m+1\}}\binom{m+1}{k}\sum_{l=0}^{m+1-k} \binom{m+1-k}{l}\right.\nonumber
		\\
		&~~~~~~~~\times\left. \frac{d^{m+1-k-l}\Psi(s\xi)}{d \xi^{m+1-k-l}} \frac{d^l g^{(j)}(\xi^2y^2)}{d\xi^l} \frac{d^k\xi^{(2j)}}{d\xi^k} Q_{2j-n}(y) e^{2\pi ix\xi} d\xi \right|\nonumber
		\\
		=& |x|^{-\beta} \frac{1}{|2\pi x|^{m+1}} \left| \sum_{n=0}^m \frac{d^{m-n} y^\beta}{d y^{m-n}} \int_{\R} \sum_{j=\left[\frac{n+1}{2}\right]}^n\sum_{k=0}^{\min\{2j,m+1\}}\binom{m+1}{k}\sum_{l=0}^{m+1-k} \binom{m+1-k}{l}\right.\nonumber
		\\
		&~~~~~~~~\times\left. \frac{d^{m+1-k-l}\Psi(s\xi)}{d \xi^{m+1-k-l}} \sum_{i=\left[\frac{l+1}{2}\right]}^l g^{(i+j)}(\xi^2 y^2) y^{2i} Q_{2i-l}(\xi) \frac{d^k\xi^{(2j)}}{d\xi^k} Q_{2j-n}(y) e^{2\pi ix\xi} d\xi \right|.\label{eq1}
		\end{align}
		By $(\ref{eq1})$, $(\ref{eq3.34})$, $\supp\Psi\subset B(0,1)$, $\Psi\in\mathscr{S}(\R)$, substituting $\xi^2y^2=\tilde{\xi}$, $0<\beta<1$, $|x|>y$ and $(\ref{eq3.42})$, we further obtain
		\begin{align*}
		 &\left||x|^{-\beta} \frac{d^m}{d y^m}\left(y^\beta\int_{\R} g(y^2\xi^2)\Psi(s\xi)e^{2\pi ix\xi} d\xi\right)\right|
		\\
		\lesssim& |x|^{-\beta} \frac{1}{|x|^{m+1}} \sum_{n=0}^m y^{\beta-m+n} \sum_{j=\left[\frac{n+1}{2}\right]}^n \sum_{k=0}^{\min\{2j,m+1\}} \sum_{l=0}^{m+1-k}\sum_{i=\left[\frac{l+1}{2}\right]}^l
		\\
		&~~~~~~~~\times \int_{\R} \left| \Psi^{(m+1-k-l)}(s\xi) s^{m+1-k-l} g^{(i+j)}(\xi^2 y^2) y^{2i} \xi^{2i-l} \xi^{2j-k} y^{2j-n} \right| d\xi
		\\
		\lesssim& \frac{1}{|x|^{m+1}} \sum_{n=0}^m y^{-m+n} \sum_{j=\left[\frac{n+1}{2}\right]}^n \sum_{k=0}^{\min\{2j,m+1\}} \sum_{l=0}^{m+1-k}\sum_{i=\left[\frac{l+1}{2}\right]}^l
		\\
		&~~~~~~~~\times \int_{|\xi|<s^{-1}} \left| \Psi^{(m+1-k-l)}(s\xi)^{m+1-k-l}  g^{(i+j)}(\xi^2 y^2) \xi^{-m-1+2i+2j} y^{2i+2j-n} \right| d\xi
		\\
		\lesssim& \frac{1}{|x|^{m+1}} \sum_{n=0}^m y^{-m+n} \sum_{j=\left[\frac{n+1}{2}\right]}^n \sum_{k=0}^{\min\{2j,m+1\}} \sum_{l=0}^{m+1-k}\sum_{i=\left[\frac{l+1}{2}\right]}^l 
		\\
		&~~~~~~~~\times \int_{\R} \left|g^{(i+j)}(\xi^2y^2)\xi^{-m-1+2i+2j}y^{2i+2j-n}\right|d\xi
		\\
		=&|x|^{-m-1} \sum_{n=0}^m \sum_{j=\left[\frac{n+1}{2}\right]}^n \sum_{k=0}^{\min\{2j,m+1\}} \sum_{l=0}^{m+1-k}\sum_{i=\left[\frac{l+1}{2}\right]}^l \int_0^\infty \left|g^{(i+j)}(\xi)\xi^{i+j-\frac{m}{2}-1}\right|d\xi
		\\
        \lesssim& |x|^{-m-1}.
    \end{align*}
\end{proof}


\begin{proof}[Proof of Theorem \ref{t1.1}]
%
%
Suppose the following condition holds for the moment:
\begin{equation}\label{eq3.16}
\left\||\cdot|^{\beta-1}(H\Phi)(|\cdot|^{-1}x)\right\|_{L^{p,r}_{|\cdot|^\alpha}(\R)}\lesssim |x|^{\beta-\frac{1+\alpha}{p}}.
\end{equation}
	For any $x\in\R$, by  Lemma \ref{t3.3}, Proposition \ref{rem2.1} (i) and $(\ref{eq3.16})$, we obtain
    \begin{align}
        |Hh_{\Phi,\beta}(f)(x)|
        &=|h_{H\Phi,\beta}(f)(x)|\nonumber\\
        &=\left| \int_{\R} \frac{H\Phi(|y|^{-1}x)}{|y|^{1-\beta}} f(y)dy\right|\nonumber\\
        &\lesssim \|f\|_{H^p_{|\cdot|^\alpha}(\R)} \left\||\cdot|^{\beta-1}(H\Phi)(|\cdot|^{-1}x)\right\|_{\mathcal{L}^{p,r}_{|\cdot|^\alpha}(\R)}\nonumber\\
        &\lesssim \|f\|_{H^p_{|\cdot|^\alpha}(\R)} |x|^{\beta-\frac{1+\alpha}{p}}.\label{eq3.24}
    \end{align}
    Denote \(w(x):=|x|^\gamma\). Same to the estimate of $(\ref{eq3.43})$ with $(\ref{eq3.24})$ and $\frac{1+\alpha}{p}-\frac{1+\gamma}{q}=\beta$, we obtain
    \[
        w\left( \left\{ x\in\R:|Hh_{\Phi,\beta}(f)(x)|>\lambda \right\} \right)
        \lesssim \left(\frac{\|f\|_{H^p_{|\cdot|^\alpha}(\R)}}{\lambda}\right)^q,
    \]
    which implies that
	\[
    \|Hh_{\Phi,\beta}(f)\|_{L_{|\cdot|^\gamma}^{q,\infty}(\R)}\lesssim \|f\|_{H^p_{|\cdot|^\alpha}(\R)}.
    \]
    Thus, to end the proof, it leaves to verify $(\ref{eq3.16})$. 

	From Proposition \ref{rem2.1} (ii), Lemma \ref{t3.2} with $m=\frac{1+\gamma}{p}-1$, it suffices to verify 
    \[
    \sup_{|y|>0} \left| \frac{d^m}{dy^m} \left(\frac{|y|^{\beta-1}}{|x|^\beta}(H\Phi)(|y|^{-1}x)\right) \right| \lesssim |x|^{-m-1}.
    \]
    Without loss of generality, we assume that \(y > 0\). By the inverse Fourier transform and $(\ref{eq2.6})$, we obtain
    \begin{align}
        \frac{y^{\beta-1}}{|x|^\beta}(H\Phi)(y^{-1}x)
		&= \int_{\R}\frac{y^{\beta-1}}{|x|^\beta} \widehat{H\Phi}(\xi)e^{2\pi ix\xi y^{-1}} d\xi\nonumber\\
        &=-i  \frac{1}{|x|^\beta}\int_{\R} {\rm sgn}(\xi) \widehat\Phi(\xi) y^{\beta-1} e^{2\pi i x\xi y^{-1}} d\xi.\label{eq3.35}
    \end{align}    
    A known result is as follows \cite[Page 12]{2016Chen}:
    \begin{equation}\label{eq3.37}
    \frac{d^n e^{2\pi ix\xi y^{-1}}}{dy^n}=e^{2\pi ix\xi y^{-1}}\sum_{n<k\leq 2n} (-1)^n y^{-k} P_{k-n}(2\pi ix\xi),
    \end{equation}
    where \(n\geq1\) is an integer and $P_{k - n}$ is a polynomial satisfying
    \begin{equation}\label{eq3.38}
    |P_{k-n}(2\pi ix\xi)|\lesssim |2\pi x\xi|^{k-n} .
    \end{equation}
    
    For any \(y>0\), if \(|x|\leq y\), by $(\ref{eq3.35})$, Leibniz's rule and $(\ref{eq3.37})$, we have
    \begin{align*}
    &\left| \frac{d^m}{dy^m} \left(\frac{y^{\beta-1}}{|x|^\beta}(H\Phi)(y^{-1}x)\right) \right|\\
    =&\left| -i \frac{d^m}{dy^m} \left( \frac{1}{|x|^\beta}\int_{\R} {\rm sgn}(\xi) \widehat\Phi(\xi) y^{\beta-1} e^{2\pi i x\xi y^{-1}} d\xi\right) \right|\\
    =&\left|\frac{1}{|x|^\beta}\int_{\R} {\rm sgn}(\xi) \widehat{\Phi}(\xi)\sum_{n=0}^m\binom{m}{n}\frac{d^{m-n}y^{\beta-1}}{dy^{m-n}}\frac{d^ne^{2\pi ix\xi y^{-1}}}{dy^n} d\xi\right|\\
    =&\left|\frac{1}{|x|^\beta}\int_{\R} {\rm sgn}(\xi) \widehat{\Phi}(\xi)e^{2\pi ix\xi y^{-1}}\left[\frac{d^my^{\beta-1}}{dy^m}+\sum_{n=1}^m\binom{m}{n}\frac{d^{m-n}y^{\beta-1}}{dy^{m-n}}\sum_{n<k\leq 2n} \frac{(-1)^n}{y^k} P_{k-n}(2\pi ix\xi) \right] d\xi\right|\\
    \leq&\left|\frac{1}{|x|^\beta}\int_{\R} {\rm sgn}(\xi)\widehat{\Phi}(\xi) e^{2\pi ix\xi y^{-1}}\frac{d^my^{\beta-1}}{dy^m} d\xi\right|\\
    &~~~~~~~~+\left|\frac{1}{|x|^\beta}\int_{\R} {\rm sgn}(\xi) \widehat{\Phi}(\xi)\sum_{n=1}^m\binom{m}{n}\frac{d^{m-n}y^{\beta-1}}{dy^{m-n}}e^{2\pi ix\xi y^{-1}}\sum_{n<k\leq 2n} \frac{(-1)^n}{y^k} P_{k-n}(2\pi ix\xi) d\xi\right|\\
    =:&I+J.
    \end{align*}

    For I, by $-1<\beta-1<0$, $|x|\leq y$, \(\widehat{\Phi}\in C^{2m+1}(\R)\) and that the support of \(\widehat{\Phi}\) is compact, we have
    \[
        I\lesssim \frac{y^{\beta-1-m}}{|x|^\beta} \int_{\R} |\widehat{\Phi}(\xi)| d\xi=\left(\frac{y}{|x|}\right)^{\beta-1}y^{-m}|x|^{-1}\int_{\R}|\widehat{\Phi}(\xi)|d\xi \lesssim |x|^{-m-1}.
    \]

    For J, by $(\ref{eq3.38})$, \(k-n\geq1\), $-1<\beta-1<0$, $|x|\leq y$,  \(\widehat{\Phi}\in C^{2m+1}(\R)\) and that the support of \(\widehat{\Phi}\) is compact, we obtain
    \[
    \begin{aligned}
    J\lesssim& \frac{1}{|x|^\beta}\sum_{n=1}^m y^{\beta-1-m+n} \sum_{n<k\leq 2n} y^{-k}\int_{\R} |\widehat{\Phi}(\xi)(x\xi)^{k-n}| d\xi\\
    =& y^{-m-1}\left(\frac{y}{|x|}\right)^{\beta-1}\sum_{n=1}^m\sum_{n<k\leq 2n}\left(\frac{|x|}{y}\right)^{k-n-1} \int_{\R}|\widehat{\Phi}(\xi)\xi^{k-n}|d\xi\\
    \lesssim& |x|^{-m-1}.
    \end{aligned}
    \]

    For any \(y>0\), if \(|x|>y\), by $(\ref{eq3.35})$, \(\text{sgn}(\xi)=\text{sgn}(y\xi)\) with \(y > 0\), substituting \(y^{-1}\xi=\tilde{\xi}\), we have
    \begin{equation}\label{eq3.36}
        \frac{y^{\beta-1}}{|x|^\beta}(H\Phi)(y^{-1}x)=-i\frac{1}{|x|^\beta}\int_{\R} {\rm sgn}(\xi) \widehat{\Phi}(y\xi)y^\beta e^{2\pi ix\xi}d\xi.
    \end{equation}
	By $(\ref{eq3.36})$ and Leibniz's rule, we obtain
    \[
        \begin{aligned}
        \left| \frac{d^m}{dy^m} \left(\frac{y^{\beta-1}}{|x|^\beta}(H\Phi)(y^{-1}x)\right) \right|
        &=\left|\frac{d^m}{dy^m}\left(\frac{1}{|x|^\beta}\int_{\R} {\rm sgn}(\xi) \widehat{\Phi}(y\xi)y^\beta e^{2\pi ix\xi}d\xi\right)\right|\\
        &=\frac{1}{|x|^\beta} \left|\int_{\R} {\rm sgn}(\xi) \sum_{n=0}^m \binom{m}{n} \frac{d^{m-n}y^{\beta}}{dy^{m-n}} \widehat{\Phi}^{(n)}(y\xi) \xi^n e^{2\pi ix\xi} d\xi \right|\\
        &\leq \frac{1}{|x|^\beta} \left[ \left|\int_{0}^{+\infty}\sum_{n=0}^m \binom{m}{n} \frac{d^{m-n}y^{\beta}}{dy^{m-n}} \widehat{\Phi}^{(n)}(y\xi) \xi^n e^{2\pi ix\xi} d\xi \right|\right.\\
        &~~~~~~~~+\left.\left|\int_{-\infty}^0  \sum_{n=0}^m \binom{m}{n} \frac{d^{m-n}y^{\beta}}{dy^{m-n}} \widehat{\Phi}^{(n)}(y\xi) \xi^n e^{2\pi ix\xi} d\xi \right|\right].\\
        \end{aligned}
    \]
    By integration by parts \(m+1\) times on \(\xi\), the support of \(\widehat{\Phi}\) is compact, Leibniz's rule, substituting $y\xi=\tilde{\xi}$, $0<\beta<1$, $|x|>y$ and \(\widehat{\Phi}\in C^{2m+1}(\R)\), we obtain 
        \begin{align*}
        &\left| \frac{d^m}{dy^m} \left(\frac{y^{\beta-1}}{|x|^\beta}(H\Phi)(y^{-1}x)\right)       \right|
		\\
        \leq& |x|^{-\beta} \frac{1}{|2\pi x|^{m+1}} \left[ \left|\int_0^\infty \sum_{n=0}^m \binom{m}{n} \frac{d^{m-n}y^\beta}{dy^{m-n}} \frac{d^{m+1}}{d\xi^{m+1}} \left( \widehat{\Phi}^{(n)}(y\xi)\xi^n \right) e^{2\pi ix\xi} d\xi\right| \right.
		\\
		&~~~~~~~~+\left.\left|\int_0^\infty \sum_{n=0}^m \binom{m}{n} \frac{d^{m-n}y^\beta}{dy^{m-n}} \frac{d^{m+1}}{d\xi^{m+1}} \left( \widehat{\Phi}^{(n)}(y\xi)\xi^n \right) e^{2\pi ix\xi} d\xi\right|\right]
		\\
		=&|x|^{-\beta} \frac{1}{|2\pi x|^{m+1}} \left[ \left|\int_0^\infty \sum_{n=0}^m \binom{m}{n} \frac{d^{m-n}y^\beta}{dy^{m-n}} \sum_{k=m+1-n}^{m+1}\binom{m+1}{k} \frac{d^k\widehat{\Phi}^{(n)}(y\xi)}{d\xi^k} \frac{d^{m+1-k}\xi^n}{d \xi^{m+1-k}} e^{2\pi ix\xi} d\xi\right| \right.
		\\
		&~~~~~~~~+\left.\left|\int_{-\infty}^0 \sum_{n=0}^m \binom{m}{n} \frac{d^{m-n}y^\beta}{dy^{m-n}} \sum_{k=m+1-n}^{m+1}\binom{m+1}{k} \frac{d^k\widehat{\Phi}^{(n)}(y\xi)}{d\xi^k} \frac{d^{m+1-k}\xi^n}{d \xi^{m+1-k}} e^{2\pi ix\xi} d\xi\right|\right]
		\\
		\lesssim& |x|^{-\beta}|x|^{-m-1}\sum_{n=0}^m y^{\beta-m+n}\sum_{k=m+1-n}^{m+1} \int_{\R} \left|\widehat{\Phi}^{(n+k)}(y\xi)y^k\xi^{n-m-1+k}\right| d\xi
		\\
		=& \left(\frac{y}{|x|}\right)^\beta |x|^{-m-1}\sum_{n=0}^m\sum_{k=m+1-n}^{m+1} \int_{\R} \left|\widehat{\Phi}^{(n+k)}(\xi)\xi^{n-m-1+k}\right| d\xi
		\\
        \lesssim& |x|^{-m-1}.
        \end{align*}
\end{proof}

\begin{lemma}\cite[Theorem 9]{Muchenhoupt1973}\label{t100}
Suppose $1<p<\infty$, $w$ is nonnegative function and $H$ is the Hilbert transform, then the following statements are equivalent:

(1) $w\in A_p$;

(2) There exists a constant $C$ independent of $f$, such that
$$
\|H(f)\|_{L^p_w(\R)}\leq C \|f\|_{L^p_w(\R)}.
$$
\end{lemma}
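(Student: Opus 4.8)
The assertion is an equivalence, so the plan is to treat the two implications separately: $(2)\Rightarrow(1)$ follows by testing the weighted inequality on elementary functions, while $(1)\Rightarrow(2)$ is the substantial half.

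For the necessity $(2)\Rightarrow(1)$, fix an interval $I$ and let $I^{-}$, $I^{+}$ be the adjacent intervals of the same length immediately to the left and right of $I$. For $0\leq f\in L^1(I)$ with $\supp f\subset I$ and $x\in I^{-}$ one has $0<y-x\leq 2|I|$ for every $y\in I$, so (no principal value is needed)
$$|H(f)(x)|=\frac{1}{\pi}\int_I\frac{f(y)}{y-x}\,dy\geq\frac{1}{2\pi|I|}\int_I f(y)\,dy .$$
Raising this to the $p$-th power, integrating over $I^{-}$ against $w$, invoking $(2)$ together with $\supp f\subset I$, and then taking $f=\big(\min\{w,N\}\big)^{-1/(p-1)}\chi_I$ and letting $N\to\infty$ (to bypass the possible failure of local integrability of $w^{-1/(p-1)}$), one obtains, after dividing by $|I|^{p}$,
$$\frac{w(I^{-})}{|I|}\Big(\frac{1}{|I|}\int_I w^{-1/(p-1)}\,dy\Big)^{p-1}\leq (2\pi)^{p}C^{p};$$
the mirror computation with $x\in I^{+}$ gives the same bound with $I^{-}$ replaced by $I^{+}$. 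Applying these to the two halves of an arbitrary interval $J$, and using the dual form $\|Hf\|_{L^p_w(\R)}\lesssim\|f\|_{L^p_w(\R)}\iff\|Hf\|_{L^{p'}_{w^{1-p'}}(\R)}\lesssim\|f\|_{L^{p'}_{w^{1-p'}}(\R)}$ to handle the remaining ``local'' pieces, yields the $A_p$ condition for $J$; since in one dimension balls are intervals, $w\in A_p$.

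For the sufficiency $(1)\Rightarrow(2)$, the plan is to dominate $H$ by the Hardy--Littlewood maximal operator $M$ and then apply the Muckenhoupt maximal theorem. First I would record that $A_p$ is self-improving: a weight $w\in A_p$ satisfies a reverse H\"older inequality, hence $w\in A_{p-\varepsilon}$ for some $\varepsilon>0$, and, belonging to $A_\infty$, it obeys $w(E)\lesssim(|E|/|Q|)^{\delta}w(Q)$ whenever $E\subset Q$, for some $\delta>0$. The core step is the Coifman--Fefferman good-$\lambda$ inequality: there is $C>0$ such that for every small $\gamma>0$ and all $\lambda>0$,
$$w\big(\{\,|Hf|>2\lambda,\ Mf\leq\gamma\lambda\,\}\big)\leq C\gamma^{\delta}\,w\big(\{\,|Hf|>\lambda\,\}\big),$$
which is proved via a Calder\'on--Zygmund (Whitney) decomposition of the open set $\{|Hf|>\lambda\}$ and the size and H\"ormander-type smoothness bounds of the kernel $1/(x-y)$ on each Whitney cube. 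Multiplying by $p\lambda^{p-1}$, integrating in $\lambda$, and absorbing the resulting term (this last absorption is performed first for $f$ in a dense class for which $\|Hf\|_{L^p_w(\R)}<\infty$ a priori, and then one passes to the limit) gives $\|Hf\|_{L^p_w(\R)}\lesssim\|Mf\|_{L^p_w(\R)}$; it remains to invoke the easier Muckenhoupt theorem for $M$, namely that $w\in A_{p-\varepsilon}$ yields the weak $(p-\varepsilon,p-\varepsilon)$ bound for $M$ on $L_w$ through a Vitali covering argument, which interpolated with the trivial $L^\infty$ bound gives $\|Mf\|_{L^p_w(\R)}\lesssim\|f\|_{L^p_w(\R)}$. (A modern alternative, bypassing good-$\lambda$ inequalities, is the bilinear sparse domination $|\langle Hf,g\rangle|\lesssim\sum_{Q\in\mathcal S}|Q|\,\langle|f|\rangle_Q\langle|g|\rangle_Q$ over a sparse family $\mathcal S$, followed by a direct bound of the sparse form on $L^p_w(\R)\times L^{p'}_{w^{1-p'}}(\R)$ from the $A_p$ condition; Rubio de Francia extrapolation down from $p=2$ is yet another route.)

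The main obstacle is this sufficiency direction, and within it the two weight-theoretic ingredients --- the reverse H\"older / openness property of the class $A_p$, and the good-$\lambda$ (equivalently, sparse) comparison of $H$ with $M$; once these are in hand, the passage to $L^p_w(\R)$ is routine. In the present paper the lemma is used only as a black box (through Proposition \ref{pro2.1} and the Hilbert-transform characterization of $H^q_w(\R)$), so it suffices to cite \cite{Muchenhoupt1973}; the above merely records the standard argument underlying that reference.
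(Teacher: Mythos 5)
The paper does not actually prove this lemma: it is quoted as Theorem 9 of Hunt--Muckenhoupt--Wheeden and invoked exactly once (in the proof of Theorem \ref{t1.8}, and only in the direction $A_{q_0}\Rightarrow$ boundedness on $L^{q_0}_{|\cdot|^\gamma}(\R)$). So your decision to treat it as a black-box citation is precisely what the paper does, and the sketch you append is the standard classical argument (testing for necessity; reverse H\"older self-improvement, the good-$\lambda$ comparison of $H$ with $M$, and the weighted maximal theorem for sufficiency). The sufficiency half --- the only half the paper uses --- is outlined accurately.

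Two steps of your necessity sketch, however, would fail if written out literally. First, the truncation $f=(\min\{w,N\})^{-1/(p-1)}\chi_I$ goes the wrong way: since $\min\{w,N\}\le w$, this $f$ dominates $w^{-1/(p-1)}\chi_I$ pointwise, so $\|f\|_{L^p_w(\R)}$ can still be infinite and the truncation does not bypass the possible non-integrability of $w^{-1/(p-1)}$. You want $f=\min\{w^{-1/(p-1)},N\}\chi_I$, for which $\|f\|_{L^p_w(\R)}^p\le\int_I f<\infty$ and $\int_I f\uparrow\int_I w^{-1/(p-1)}\,dy$ by monotone convergence. Second, testing on the two halves $J_1,J_2$ of an interval $J$ produces only the cross estimates $w(J_2)\sigma(J_1)^{p-1}\lesssim|J|^p$ and $w(J_1)\sigma(J_2)^{p-1}\lesssim|J|^p$, where $\sigma:=w^{-1/(p-1)}$; the dual inequality on $L^{p'}_{\sigma}(\R)$ reproduces exactly these same cross estimates (raise them to the power $p'-1$ to see this) and does not supply the missing diagonal terms $w(J_i)\sigma(J_i)^{p-1}$. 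To close the argument you additionally need $w(J_1)\approx w(J_2)$ and $\sigma(J_1)\approx\sigma(J_2)$, which follow from testing $(2)$ and its dual on the indicator functions $\chi_{J_i}$ (using $|H(\chi_{J_1})|\gtrsim 1$ on $J_2$ and vice versa). With these two repairs your sketch is the correct Hunt--Muckenhoupt--Wheeden proof; as a justification for citing the result in this paper, it is more than sufficient.
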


\begin{proof}[Proof of Theorem \ref{t1.8}]
Since \(m=[\frac{1+\alpha}{p}-1]+1>\frac{1+\alpha}{p}-1\), there is a \(p_1\in (0,p)\) satisfying
\[
m=\frac{1+\alpha}{p_1}-1.
\]
From this and using Theorem \ref{t1.6}, it follows that
\begin{equation}\nonumber
\|Hh_{\Phi,\beta}(f)\|_{L^{q_1,\infty}_{|\cdot|^\gamma}(\R)}\lesssim \|f\|_{H^{p_1}_{|\cdot|^\alpha}(\R)},
\end{equation}
where
\[
\frac{1+\alpha}{p_1}-\frac{1+\gamma}{q_1}=\beta.
\]
On the other hand, since \(\alpha>0\), we have \(p_0>1+\alpha > 1\). Also, by $p_0>\max\{\frac{1+\alpha}{1+\beta+\gamma},1+\alpha\}$, $\gamma>0$ and $\frac{1+\alpha}{p_0}-\frac{1+\gamma}{q_0}=\beta$, we have \(q_0>1\) and \(q_0>\frac{1+\gamma}{1-\beta}\). Therefore, \(L^{p_0}_{|\cdot|^\alpha}(\R)=H^{p_0}_{|\cdot|^\alpha}(\R)\). Thus, by Lemma \ref{t100} with $|x|^\gamma\in A_{q_0}$ (by $(\ref{eq2})$ with $q_0>1+\gamma$) and Lemma \ref{t3.4}, we obtain 
$$
\|Hh_{\Phi,\beta}(f)\|_{L^{q_0}_{|\cdot|^\gamma}(\R)}\lesssim \|h_{\Phi,\beta}(f)\|_{L^{q_0}_{|\cdot|^\gamma}(\R)}\lesssim \|f\|_{L^{p_0}_{|\cdot|^\alpha}(\R)}=\|f\|_{H^{p_0}_{|\cdot|^\alpha}(\R)}.
$$
Then using the same proof of Theorem \ref{t1.4} and Lemma \ref{t3.7} with quasilinear operator $T:=H\circ h_{\Phi,\beta}$, we further obtain
$$
\|Hh_{\Phi,\beta}(f)\|_{L^q_{|\cdot|^\gamma}(\R)} \lesssim \|f\|_{H^p_{|\cdot|^\gamma}(\R)}.
$$
From this and Proposition \ref{pro2.1}, it follows that
$$
\|h_{\Phi,\beta}(f)\|_{H^q_{|\cdot|^\gamma}(\R)} \lesssim \|f\|_{H^p_{|\cdot|^\gamma}(\R)}.
$$ 
\end{proof}

\noindent{\bf Funding } This project is supported by Natural Science Foundation of Xinjiang Province (No.2024D01C40) and NSFC (No.12261083).

\noindent{\bf Data Availability} No datasets were generated or analysed during the current study.

\section*{Declarations}

\noindent{\bf Ethical Approval} Not applicable.

\noindent{\bf Competing interests} The authors declare no competing interests.

\end{document}